\newcommand{\id}{\mathrm{id}}
\DeclareMathOperator{\fix}{Fix}
\DeclareMathOperator{\hol}{Hol}
\DeclareMathOperator{\diam}{diam}
\DeclareMathOperator{\lip}{Lip}
\theoremstyle{plain}
\newtheorem{theorem}{Theorem}
\newtheorem{lemma}[theorem]{Lemma}
\newtheorem{sublemma}[theorem]{Sublemma}
\newtheorem{cor}[theorem]{Corollary}
\newtheorem*{acknowledgements}{Acknowledgments}
\numberwithin{theorem}{section}
\numberwithin{equation}{section}
\theoremstyle{definition}
\newtheorem{definition}{Definition}
\theoremstyle{remark}
\newtheorem{rem}{Remark}
\title{Finite Periodic Data Rigidity For Two-Dimensional Area-Preserving Anosov Diffeomorphisms}
\author{Thomas Aloysius O'Hare}
\date{September 2024}
\begin{document}
\maketitle
\begin{abstract}
    \noindent Let $f,g$ be $C^2$ area-preserving Anosov diffeomorphisms on $\mathbb{T}^2$ which are topologically conjugate by a homeomorphism $h$ ($hf=gh$). We assume that the Jacobian periodic data of $f$ and $g$ are matched by $h$ for all points of some large period $N\in\mathbb{N}$. We show that $f$ and $g$ are ``approximately smoothly conjugate." That is, there exists a $C^{1+\alpha}$ diffeomorphism $\overline{h}_N$ such that $h$ and $\overline{h}_N$ are $C^0$ exponentially close in $N$, and $f$ and $f_N:=\overline{h}_N^{-1}g\overline{h}_N$ are $C^1$ exponentially close in $N$. Moreover, the rates of convergence are uniform among different $f,g$ in a $C^2$ bounded set of Anosov diffeomorphisms. The main idea in constructing $\overline{h}_N$ is to do a ``weighted holonomy" construction, and the main technical tool in obtaining our estimates is a uniform effective version of Bowen's equidistribution theorem of weighted discrete orbits to the SRB measure.
\end{abstract}

\section{Introduction}
\label{sec: Introduction}

Let $f,g:M\rightarrow M$ be two transitive Anosov diffeomorphisms on a compact manifold $M$ which are topologically conjugate by a homeomorphism $h$; that is, $hf=gh$. We say that $f$ and $g$ have matching periodic data (under the conjugacy $h$), if for every $n\in\mathbb{N}$ and every $p\in\fix(f^n)$, the linear maps $D_pf^n$ and $D_{h(p)}g^n$ are conjugate. If we assume that our conjugacy $h$ were $C^1$ and differentiate the conjugacy equation at the point $p$, we would see that $f$ and $g$ would have matching periodic data. In other words, the matching of the periodic data is a necessary condition for the conjugacy to be at least $C^1$. By taking the stable and unstable Jacobians of the differentiated conjugacy equation at the point $p$, we arrive at the weaker (but simpler to check) condition that 
\begin{equation}
\label{eqn: Jacobian Periodic Data Matching Condition}
    D_uf^n(p)=D_ug^n(h(p)) \hspace{5mm}\text{and}\hspace{5mm} D_sf^n(p)=D_sg^n(h(p)),
\end{equation}
where $D_u$ and $D_s$ denote the unstable and stable Jacobians, respectively. Thus the matching of the Jacobian periodic data in the sense of (\ref{eqn: Jacobian Periodic Data Matching Condition}) is a necessary condition for the conjugacy $h$ to be at least $C^1$. When (\ref{eqn: Jacobian Periodic Data Matching Condition}) is sufficient for $h$ to be $C^1$ or better, then we say that $f$ is \emph{periodic data rigid}.

Which Anosov diffeomorphisms are periodic data rigid has been extensively studied. For low-dimensional Anosov diffeomorphisms the answer is quite well understood. In dimension one, it was proved by Shub and Sullivan \cite{shub_sullivan_1985} that expanding maps on the circle are periodic data rigid; this was considerably generalized by Martens and de Melo \cite{Martens-Marco} who proved that $C^r$ Markov maps on the circle are periodic data rigid. In dimension two, it follows by the work of de la Llave, Marco, and Moriy\'on (\cite{dlLM}, \cite{delaLlave}), that all Anosov diffeomorphisms are periodic data rigid.

The present paper investigates the regularity of the conjugacy $h$ if we assume that the periodic data matches in the sense of (\ref{eqn: Jacobian Periodic Data Matching Condition}) for only finitely many periodic points when $f$ and $g$ are both area-preserving. Of course, if the periodic data fails to match up for just a single periodic orbit, then the conjugacy $h$ cannot be $C^1$. Instead, we show that as the largest number $N$ for which all orbits of period $N$ satisfy (\ref{eqn: Jacobian Periodic Data Matching Condition}) increases, we can find, in a uniform sense, a new $C^{1+\alpha}$ diffeomorphism that acts as smooth conjugacy between nearby Anosov systems:
 
%The present paper serves to address the following question in the case of area-preserving Anosov diffeomorphisms on $\mathbb{T}^2$: if we assume that the periodic data matches in the sense of \ref{eqn: Jacobian Periodic Data Matching Condition} for only finitely many periodic points, can we draw conclusions on the regularity of our conjugacy $h$? Of course, if the periodic data fails to match up for just a single periodic orbit, then the conjugacy $h$ cannot be $C^1$. What we will show instead is that as the largest number $N$ for which all orbits of period $N$ satisfy \ref{eqn: Jacobian Periodic Data Matching Condition} increases, we can find, in a uniform sense, a new $C^{1+\alpha}$ diffeomorphism that acts as smooth conjugacy between nearby Anosov systems:

\begin{theorem}
\label{Thm:Main Theorem}
    Let $\mathcal{U}\subset A^2(\mathbb{T}^2)$ be a closed and bounded subset of the set of all $C^2$ area-preserving Anosov diffeomorphisms in the same homotopy class as a fixed linear hyperbolic automorphism $F_L$. Then there exist constants $0<\alpha<1$ $C_0,C_1>0$ and $0<\lambda_0,\lambda_1<1$ depending only on the set $\mathcal{U}$ such that the following holds: If $f,g\in\mathcal{U}$ are conjugated by a homeomorphism $h$ homotopic to the identity ($hf=gh$), and if there exists a natural number $N$ such that for every point $p\in\fix(f^N)$ we have 
    $(D_uf^N)(p)=(D_ug^N)(h(p))$, and $(D_sf^N)(p)=(D_sg^N)(h(p))$,
    then there exists a $C^{1+\alpha}$ diffeomorphism $\overline{h}_N$ of $\mathbb{T}^2$ such that $d_{C^0}(h,\overline{h}_N)$, $d_{C^0}(h^{-1},\overline{h}_N^{-1})<C_0\lambda_0^N$, and $d_{C_1}(f,f_N)<C_1\lambda_1^N$, where $f_N:=\overline{h}_N^{-1}g\overline{h}_N$.
\end{theorem}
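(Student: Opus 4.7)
The plan is to reduce the problem to an effective Livsic-type construction along the stable and unstable foliations. Set
\begin{equation*}
\psi^u(x) := \log D_u f(x) - \log D_u g(h(x)), \qquad \psi^s(x) := \log D_s f(x) - \log D_s g(h(x)).
\end{equation*}
If $h$ were $C^1$, then $\psi^u$ and $\psi^s$ would be $f$-coboundaries (with coboundary functions $\log D_u h$ and $\log D_s h$, respectively); by Livsic's theorem, matching of the full Jacobian periodic data is equivalent to these being coboundaries. The hypothesis only gives that $\sum_{k=0}^{N-1}\psi^u(f^k(p)) = 0$ for every $p\in\fix(f^N)$, and analogously for $\psi^s$; this is a finite Livsic condition at scale $N$, which should yield an \emph{approximate} coboundary solution.

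The next step is to construct $\phi^u_N,\phi^s_N:\mathbb{T}^2\to\mathbb{R}$ with $\|\phi^u_N\circ f-\phi^u_N-\psi^u\|_{C^0}\le C\lambda^N$ and analogous stable estimate. The natural ansatz is a weighted telescoping sum: for each $x$, pick an orbit of period $N$ shadowing the forward trajectory of $x$ (furnished by specification on $\mathbb{T}^2$), and set $\phi^u_N(x)$ to be the partial Birkhoff sum of $\psi^u$ along $x$ corrected by the sum along this periodic orbit, which vanishes exactly by hypothesis. I expect that the ``weighted holonomy'' alluded to in the abstract is exactly the exponentiation of these approximate solutions: $e^{\phi^u_N}$ plays the role of the would-be derivative of $h$ along unstable leaves. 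Here is where the effective Bowen equidistribution of weighted discrete orbits enters, namely to show the resulting $\phi^u_N$ does not depend (up to exponentially small error) on the chosen shadowing orbit, and to establish leafwise $\alpha$-Hölder regularity with constants depending only on $\mathcal{U}$.

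With $\phi^u_N$ and $\phi^s_N$ in hand, I would build $\bar h_N$ by modifying $h$ leafwise so that its derivative along unstable (resp.\ stable) leaves equals $e^{\phi^u_N}$ (resp.\ $e^{\phi^s_N}$) — since leaves are one-dimensional, each modification is a single-variable integration that produces a $C^{1+\alpha}$ reparameterization on each leaf. The global $C^{1+\alpha}$ regularity of the combined map on $\mathbb{T}^2$ should follow from Journé's regularity lemma applied to the pair of transverse foliations. The $C^0$ estimate $d_{C^0}(h,\bar h_N)\le C_0\lambda_0^N$ comes from the exponential smallness of the modification; the $C^1$ estimate $d_{C^1}(f,f_N)\le C_1\lambda_1^N$ then follows by differentiating the conjugacy relation $f_N\bar h_N^{-1}=\bar h_N^{-1}g$ and using that the derivative of $\bar h_N$ matches $e^{\phi^{u,s}_N}$ along leaves.

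The main obstacle is certainly the uniform effective Livsic/equidistribution step: standard Livsic produces no rate, and obtaining an exponential rate of order $\lambda^N$ with constants uniform over the $C^2$-bounded family $\mathcal{U}$ requires a \emph{weighted} equidistribution of periodic orbits to the SRB measure (weights being exponentials of Birkhoff sums of $\log D_u f$), which in turn rests on uniform spectral gap of the associated transfer operators. Translating a bare $C^2$-bound on $\mathcal{U}$ into uniform Hölder regularity of holonomies, uniform bounded distortion constants, and uniform spectral gap is the delicate technical engine that everything else hinges on. A secondary obstacle is the quantitative Journé recombination: I would need to ensure that the leafwise $C^{1+\alpha}$-modifications fit together with transverse regularity estimates that are themselves uniform in $N$ and in $\mathcal{U}$.
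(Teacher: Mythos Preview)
Your identification of $\psi^u,\psi^s$ and the finite Livsic condition is correct and morally aligned with the paper, but the construction you propose has a genuine gap, and you have misread what ``weighted holonomy'' means.

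The gap is in the recombination step. You propose to modify $h$ so that its leafwise derivatives equal $e^{\phi^u_N}$ along unstable leaves and $e^{\phi^s_N}$ along stable leaves, and then invoke Journ\'e. But Journ\'e's lemma is a \emph{regularity} statement for a map already given; it does not manufacture a single map from independently prescribed unstable and stable derivatives. Integrating $e^{\phi^u_N}$ along unstable leaves yields one map, integrating $e^{\phi^s_N}$ along stable leaves yields another, and there is no reason for them to coincide. The paper resolves this by a genuinely two-stage procedure: first build an intermediate $h_N$ that is $C^{1+\alpha}$ along unstable leaves but only H\"older transversally (and which sends $\mathcal{F}^{s,f}$ not to $\mathcal{F}^{s,g}$ but to a new $C^{1+\alpha}$ foliation $\tilde{\mathcal{F}}^{s,g}$ with $C^2$ leaves); then repeat the construction in the stable direction, using the unstable holonomies \emph{of this new foliation}, to obtain $\overline{h}_N$. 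Checking that the second stage preserves the unstable regularity won in the first is the delicate point, and Journ\'e is applied only at the very end, once a single map is in hand.

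The ``weighted holonomy'' is also not the exponentiation of a Livsic solution. The paper defines $h_N$ on a single compact unstable arc $A^u_f$ via the de~la~Llave formula $(I^u_{g,h(x_0)})^{-1}\circ I^u_{f,x_0}$, built from SRB conditional densities (this is where your Livsic intuition is genuinely relevant). To extend $h_N$ from $A^u_f$ to all of $\mathbb{T}^2$ one slides along stable holonomies, but from a point $x\notin A^u_f$ there are \emph{two} directions along $W^s_f(x)$ to reach $A^u_f$, giving two candidate values. The ``weight'' is a function $\rho$ used to take a convex combination of these two holonomy outputs inside the target unstable leaf, chosen so that the result is a continuous bijection. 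It is a geometric device to pass from data on a compact arc to a global homeomorphism, not an analytic one.

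Two smaller corrections. The $C^0$ closeness does not come from the modification being small a priori: $(I^u_{g,h(x_0)})^{-1}\circ I^u_{f,x_0}$ is $O(1)$ away from $h$ in general. The paper shows $|I^u_{g,h(x_0)}(h(x))-I^u_{f,x_0}(x)|$ is exponentially small by thickening to thin $su$-rectangles (via local product structure of $\mu_f$), and then comparing $\mu_f$ and $h^*\mu_g$ on those rectangles; \emph{this} is where the effective equidistribution $|\int\varphi\,d\mu_f-\int\varphi\,d\mu_f^n|\le C\|\varphi\|_{\mathrm{Lip}}\tau^n$ is used, together with the identity $\mu_f^N=h^*\mu_g^N$ coming from the period-$N$ data. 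And the $C^1$ estimate on $d_{C^1}(f,f_N)$ is not obtained by differentiating the conjugacy relation, but by interpolating between the $C^0$ estimate just established and a uniform $C^{1+\alpha}$ bound on $\overline{h}_N$; this is why so much of the paper is devoted to tracking uniformity of the construction over $\mathcal{U}$.
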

\begin{rem}
\label{rem: area-Preserving Hypothesis}
    The area-preserving assumption is not needed to construct the diffeomorphism $\overline{h}_N$. Indeed, for dissipative Anosov diffeomorphisms, there is an $\varepsilon>0$, depending only on $\mathcal{U}$, such that our construction will yield a $C^{1+\varepsilon}$ conjugacy $\overline{h}_N$. However, at the present time we do not know how to perform the estimates in Section \ref{sec: Proof the Main Theorem the C^0 Estimates} without the area-preserving assumption.
\end{rem}

The uniformity statement of Theorem \ref{Thm:Main Theorem} deserves emphasis. For any fixed $f,g\in\mathcal{U}$, the number $N$ of the highest period for which (\ref{eqn: Jacobian Periodic Data Matching Condition}) is satisfied is fixed, and we can trivially satisfy the conclusions of the theorem with any $C^{1+\alpha}$ diffeomorphism $\overline{h}_N$ by taking the constants $C_0$ and $C_1$ sufficiently large. However, Theorem \ref{Thm:Main Theorem} says that these constant, as well as the rates of convergence, can be chosen the same for every pair $f,g\in\mathcal{U}$. In particular, as we choose $f,g\in\mathcal{U}$ such that (\ref{eqn: Jacobian Periodic Data Matching Condition}) is satisfied for larger and larger values of $N$, we will genuinely get conjugacies $\overline{h}_N$ and Anosov diffeomorphisms $\overline{f}_N$ closer and closer to the original $h$ and $f$ in the respective topologies. Therefore a considerable amount of the technical difficulties we encounter will be related to ensuring that our estimates are uniform on $\mathcal{U}$. 

The motivation behind our construction of $\overline{h}_N$ comes from de la Llave's proof of periodic data rigidity in dimension two \cite{delaLlave}. Here smoothness of $h$ is established separately for the restrictions to the unstable and stable manifolds by showing that when condition (\ref{eqn: Jacobian Periodic Data Matching Condition}) is satisfied for every periodic orbit, $h$ may be expressed in terms of the smooth densities of the conditional measures of the SRB measures of $f$ and $g$ on unstable leaves (and stable leaves by considering inverses). Along a fixed unstable leaf, this expression in terms of smooth densities is well-defined even if (\ref{eqn: Jacobian Periodic Data Matching Condition}) fails for some periodic orbit, but the expression we get will not equal the restriction of $h$ to this leaf. By minimality of the unstable foliation, this defines a function on a dense subset of $\mathbb{T}^2$ which is $C^2$ in the unstable direction. Unfortunately, this function will in general be wildly discontinuous in the transverse direction and hence will not extend to a continuous function on $\mathbb{T}^2$.

We are now ready to discuss the proof of Theorem \ref{Thm:Main Theorem}, which broadly speaking will take place in three steps. First is the construction of the new conjugacy $\overline{h}_N$. Motivated by the discussion in the preceding paragraph, we define a function $h_N$ in the manner described above in terms of smooth densities on a compact segment of an unstable leaf of $f$ which we denote by $A^u_f$. Then we extend the domain of definition of $h_N$ from $A^u_f$ to all of $\mathbb{T}^2$ using the stable holonomies of $f$ and $g$. Given any point $x\in\mathbb{T}^2/A^u_f$, there are two ``directions" we can travel along the stable leaves to get to $A^u_f$, and hence two choices for holonomies. Our main trick in the construction to ensure continuity is to consider both possible choices of holonomies and to then take an appropriately weighted average of the outcomes. This construction heavily relies on the fact that in low dimensions the stable holonomy is always $C^{1+\varepsilon}$, where in higher dimensions such high regularity of the holonomy is rare. The result of this weighted-holonomy construction is a homeomorphism $h_N:\mathbb{T}^2\rightarrow\mathbb{T}^2$ which is $C^{1+\alpha}$ when restricted to any unstable leaf. By repeating the same construction along the stable foliation, we obtain the desired $C^{1+\alpha}$ diffeomorphism $\overline{h}_N$. 

The second main step of the proof is to estimate the $C^0$ distance between $h$ and $\overline{h}_N$. The key ideas are similar to those in \cite{FDR1} though more complicated in two-dimensions. We establish the exponential rate of convergence from the assumption on finite periodic data matching by proving that the periodic points effectively equidistribute to the SRB measure, with an exponential rate for Lipschitz observables. This result, which is of independent interest, generalizes Bowen's equidistribution of equilibrium states \cite{Bowen_1974} in the special case of the SRB measure. Finally, the third step of the proof is to establish the $C^1$ estimate between $f$ and $\overline{f}_N$. This will follow quite easily from the first two steps by interpolating between the $C^0$ distance and a uniform $C^{1+\alpha}$ bound.

The paper is organized as follows. In Section \ref{sec: Preliminaries}, we will recall standard definitions, notation, and results we will need. A few standard results are stated in a way better suited for the uniformity arguments we will use. Section \ref{sec: Proof of the Main Theorem: The Construction} is dedicated to the construction of the conjugacy $\overline{h}_N$. As mentioned before, uniformity of the construction is our primary concern, so a considerable amount of technical computations are necessary. To help the reader, this section is further broken into three subsections. In Section \ref{sec: Uniformity of the Initial Conjugacy}, we prove uniformity of the original conjugacy $h$ for any conjugate pair $f,g\in\mathcal{U}$. The H\"older exponent and seminorm of this conjugacy will appear several times in the construction and estimates so it is important that we have uniform bounds. Section \ref{sec: The Construction} describes the construction of $\overline{h}_N$. Along the way we state several lemmas which state that at each given step of the construction, the resulting function is uniformly bounded in the appropriate norm. These lemmas, culminating in Theorem \ref{Thm:Uniform Conjugacy Bounds}, are the heart of Section \ref{sec: Proof of the Main Theorem: The Construction}; they are also the most technically difficult parts of the paper. For this reason, the proofs of those lemmas are collected in Section \ref{sec: Uniformity of the New Conjugacy}. For a first reading, it may be advisable to skip Sections \ref{sec: Uniformity of the Initial Conjugacy} and \ref{sec: Uniformity of the New Conjugacy} and think about the construction of $\overline{h}_N$ for just a fixed pair $f,g\in\mathcal{U}$ without being bogged down by the technical uniformity arguments.

In Section \ref{sec: Proof the Main Theorem the C^0 Estimates} we prove the stated estimates on $d_{C^0}(h,\overline{h}_N)$. This will be done using a effective equidistribution argument (Theorem \ref{Thm:Effective Equidistribution}). The proof of the effective equidistribution is largely similar to the argument presented in \cite{FDR1}, though more care is needed with over counting orbits when we lift to the symbolic setting using Markov partitions. The proof of Theorem \ref{Thm:Effective Equidistribution} will be postponed to Section \ref{sec: Proof of EE}. Finally, Section \ref{sec: Proof of the Main Theorem: The C1 Estimates} finishes the proof of Theorem \ref{Thm:Main Theorem} using interpolation theory.

\begin{acknowledgements}
    The author would like to express his sincerest thanks to Andrey Gogolev for suggesting the problem, as well as for his patience and mentoring throughout the project.
\end{acknowledgements}

\section{Preliminaries}
\label{sec: Preliminaries}

\subsection{The Arzel\`a-Ascoli Argument}
\label{sec: The Arzela-Ascoli Argument}
We begin by recalling the following classical theorem:
\begin{theorem}[Arzel\`a-Ascoli]
    \label{thm: Arzela-Ascoli}
    Let $X$ be a compact Hausdorff space. Let $\mathcal{F}$ be a equicontinuous, pointwise bounded subset of $C^0(X)$. Then $\mathcal{F}$ is totally bounded in the uniform metric, and the closure of $\mathcal{F}$ is compact in $C(X)$.
\end{theorem}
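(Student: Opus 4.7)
The plan is to establish total boundedness of $\mathcal{F}$ in the uniform metric directly from the hypotheses, and then to deduce compactness of the closure by appealing to the completeness of $C(X)$ with the uniform metric.

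First I would prove total boundedness. Fix $\varepsilon>0$. Using equicontinuity, for each $x\in X$ I can choose an open neighborhood $U_x$ of $x$ such that $|f(y)-f(x)|<\varepsilon/3$ for every $y\in U_x$ and every $f\in\mathcal{F}$ simultaneously. Compactness of $X$ provides a finite subcover $U_{x_1},\ldots,U_{x_n}$. The pointwise-boundedness hypothesis then implies that the set $\bigcup_{i=1}^{n}\{f(x_i):f\in\mathcal{F}\}$ is a bounded subset of $\mathbb{R}$ (or $\mathbb{C}$), and hence can be covered by finitely many intervals $I_1,\ldots,I_m$ each of diameter less than $\varepsilon/3$.

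Next I would build the finite $\varepsilon$-net. For each map $\sigma:\{1,\ldots,n\}\to\{1,\ldots,m\}$ such that there exists at least one $f\in\mathcal{F}$ with $f(x_i)\in I_{\sigma(i)}$ for every $i$, I pick one such representative $f_\sigma$. Since there are only $m^n$ such maps, the resulting collection $\{f_\sigma\}$ is finite. Given any $f\in\mathcal{F}$, there is some $\sigma$ with $f(x_i)\in I_{\sigma(i)}$ for all $i$, so $|f(x_i)-f_\sigma(x_i)|<\varepsilon/3$. For arbitrary $y\in X$, choosing an index $i$ with $y\in U_{x_i}$ and applying equicontinuity to both $f$ and $f_\sigma$, the triangle inequality yields $|f(y)-f_\sigma(y)|<\varepsilon$. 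This produces a finite $\varepsilon$-net and gives total boundedness.

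For the second conclusion I would invoke completeness: since $X$ is compact Hausdorff, $C(X)$ with the uniform metric is complete (the uniform limit of continuous functions on a compact space is continuous). Total boundedness is preserved under closure, so $\overline{\mathcal{F}}$ is a closed totally bounded subset of a complete metric space and is therefore compact. I do not expect any significant obstacle: the only real content is the bridge between equicontinuity, which is uniform across $\mathcal{F}$, and pointwise boundedness, which is a non-uniform finiteness hypothesis; compactness of $X$ is precisely what converts the latter into a uniform statement by reducing it to finitely many test points $x_i$.
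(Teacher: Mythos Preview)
Your argument is correct and is the standard proof of the Arzel\`a--Ascoli theorem. The paper does not actually supply a proof of this statement; it merely recalls it as a classical result before deriving Corollary~\ref{cor: Compact Embedding}, so there is nothing to compare against.
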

We have the following important corollary:
\begin{cor}
    \label{cor: Compact Embedding}
    Let $M$ be a compact manifold and $0<r_1<r_2$. Then $C^{r_2}$ is compactly embedded in $C^{r_1}$. 
\end{cor}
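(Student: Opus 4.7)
The plan is to deduce the corollary from Theorem \ref{thm: Arzela-Ascoli} by applying Arzelà-Ascoli to all partial derivatives up to order $\lfloor r_1 \rfloor$, and then upgrading from integer to fractional regularity via a standard Hölder interpolation inequality. Using a finite atlas on $M$ with a subordinate partition of unity, it suffices to work in a single coordinate chart, where the $C^r$ norms have their usual meaning in terms of suprema of partial derivatives and Hölder seminorms.

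Write $r_i = k_i + \beta_i$ with $k_i = \lfloor r_i \rfloor$ and $\beta_i \in [0,1)$. Let $\mathcal{F} \subset C^{r_2}(M)$ be bounded and let $(f_n) \subset \mathcal{F}$. For each multi-index $\alpha$ with $|\alpha| \leq k_1$, the family $\{\partial^\alpha f_n\}$ is uniformly bounded and, in fact, uniformly Hölder: either $k_1 < k_2$, in which case $\partial^\alpha f_n$ has at least one further uniformly bounded derivative and is therefore uniformly Lipschitz, or $k_1 = k_2$, in which case at top order $|\alpha| = k_1 = k_2$ the family is uniformly $\beta_2$-Hölder directly by hypothesis. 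In either case equicontinuity holds, so Theorem \ref{thm: Arzela-Ascoli} applies. A diagonal extraction across the finitely many multi-indices with $|\alpha| \leq k_1$ produces a subsequence, still denoted $(f_n)$, that converges uniformly in every such derivative, i.e.\ that converges in $C^{k_1}$.

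To upgrade from $C^{k_1}$ to $C^{r_1}$ when $\beta_1 > 0$, fix $|\alpha| = k_1$ and set $\gamma := \beta_2$ if $k_1 = k_2$ and $\gamma := 1$ otherwise, so that in both cases $\gamma > \beta_1$. The differences $g_{n,m} := \partial^\alpha(f_n - f_m)$ satisfy $\|g_{n,m}\|_{C^0} \to 0$ as $n,m \to \infty$, while $\|g_{n,m}\|_{C^\gamma}$ stays bounded by $2\sup_n \|f_n\|_{C^{r_2}}$. The standard Hölder interpolation inequality
\[
\|g\|_{C^{\beta_1}} \leq C\,\|g\|_{C^0}^{1-\beta_1/\gamma}\,\|g\|_{C^\gamma}^{\beta_1/\gamma}
\]
then shows that $(\partial^\alpha f_n)$ is Cauchy in $C^{\beta_1}$ for every such $\alpha$, and hence $(f_n)$ is Cauchy, and therefore convergent, in $C^{r_1}$.

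The only mildly technical point is the interpolation step in the case $k_1 = k_2$, where one has only the gap $\beta_2 - \beta_1 > 0$ in Hölder exponent to trade for $C^0$ smallness; this is the unique place where the strict inequality $r_2 > r_1$ is actually used. Apart from this, the argument is a routine combination of Arzelà-Ascoli with the diagonal trick, so I do not expect serious obstacles.
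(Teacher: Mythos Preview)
Your argument is correct: the reduction to charts, the Arzel\`a--Ascoli step applied to each $\partial^\alpha f_n$ with $|\alpha|\le k_1$, the diagonal extraction, and the H\"older interpolation to pass from $C^{k_1}$ to $C^{r_1}$ all go through as you describe. The paper itself gives no proof of this corollary---it is stated immediately after Theorem~\ref{thm: Arzela-Ascoli} as a standard consequence and then used without further comment---so there is nothing to compare against beyond noting that your write-up supplies exactly the details the paper omits.
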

In particular, our $C^2$ closed and bounded set $\mathcal{U}$ is $C^1$ compact. As a consequence, it will be sufficient to prove uniformity of all estimates within a $C^1$ neighborhood of a fixed $f\in\mathcal{U}$. By compactness, $\mathcal{U}$ is finitely covered by such neighborhoods and we can extract a uniform bound that works for every $f\in\mathcal{U}$. The cost is that we lose explicit information on the size of our constants when we apply this compactness argument.

\subsection{Anosov Diffeomorphisms}
\label{sec: Anosov Diffeomorphisms}
A $C^1$ diffeomorphism $f:M\rightarrow M$ of a compact manifold $M$ is said to be Anosov if there exists a continuous, nontrivial, $Df$-invariant splitting of the tangent bundle
\begin{equation}
    \label{eqn: Anosov Splitting}
    TM=E^u\oplus E^s
\end{equation}
and constants $0<\mu_-<\nu_-<1<\mu_+<\nu_+$, $C\geq 1$ such that
$$C^{-1}\mu_-^n||v^s||\leq ||D_xf^n(v^s)||\leq C\nu_-^n||v^s||,\hspace{1mm} \text{and}$$
\begin{equation}
\label{eqn: Definition of Expansion and Contraction rates}
    C^{-1}\mu_+^n||v^u||\leq ||D_xf^n(v^u)||\leq C\nu_+^n||v^u||,
\end{equation}
for every $x\in M$,$n\in\mathbb{N}$, $v^s\in E^s(x)$, and $v^u\in E^u(x)$.
The bundles $E^s$ and $E^u$ are called the stable and unstable subbundles, respectively.
In dimension two, we necessarily have $\dim(E^s)=\dim(E^u)=1$. We thus may identify $Df|_{E^s}$ and $Df|_{E^u}$ with the stable and unstable Jacobians which we denote by $D_sf$ and $D_uf$, respectively.

In order to obtain uniform estimates in Theorem \ref{Thm:Main Theorem}, it will be necessary to have uniformity of the expansion and contraction rates (\ref{eqn: Definition of Expansion and Contraction rates}). These rates will be bounded uniformly in a sufficiently small $C^1$-neighborhood of any fixed $f\in\mathcal{U}$. Thus, by the Arzela\`a-Ascoli argument, we have constants $0<\mu_0<\nu_0<1<\mu_1<\nu_1$, $C_{\mathcal{U}}\geq 1$ such that for every $f\in\mathcal{U}$, $x\in\mathbb{T}^2$, and $n\in\mathbb{N}$, we have 
\begin{equation}
\label{eqn:Anosov Condition}
    C_{\mathcal{U}}^{-1}\mu^n_0\leq|D_sf^n(x)|\leq C_{\mathcal{U}}\nu^n_0,\hspace{1mm}\text{and}\hspace{1mm}C^{-1}_{\mathcal{U}}\mu^n_1\leq|D_uf^n(x)|\leq C_{\mathcal{U}}\nu^n_1.
\end{equation}

The stable and unstable subbundles uniquely integrate to $f$-invariant foliations which we denote by $\mathcal{F}^{s,f}$ and $\mathcal{F}^{u,f}$, respectively. The leaves of these foliations can be characterized as follows:
\begin{equation}
    W^s_f(x)=\{y\in\mathbb{T}^2\hspace{1mm} |\hspace{1mm} d(f^n(x),f^n(y))\rightarrow 0, n\rightarrow\infty \},
\end{equation}

\begin{equation}
    W^u_f(x)=\{y\in\mathbb{T}^2\hspace{1mm} |\hspace{1mm} d(f^n(x),f^n(y))\rightarrow 0, n\rightarrow -\infty \}.
\end{equation}
The leaves of the stable and unstable foliations are as smooth as the diffeomorphism $f$, however the foliations themselves often have a lower degree of regularity; see section \ref{sec: Regularity of the Stable and Unstable Foliations} for a more detailed discussions of the regularity of the stable and unstable foliations. 

Since the stable and unstable manifolds through a point $x$ are (at least) $C^1$ immersed submanifolds of $\mathbb{T}^2$, we have an induced Riemannian metric on each leaf. For two points $x\in\mathbb{T}^2$ and $y\in W^s_f(x)$, we denote their distance within the leaf $W^s_f(x)$ by $d^s_f(x,y)$, and we similarly denote the metric on unstable leaves by $d^u_f$. The induced metrics allow us to define the local stable and unstable manifolds as
\begin{equation}
\label{eqn: Def of Local Stable Manifold}
    W^\sigma_{f,\delta}(x)=\{y\in W^\sigma_f(x)\hspace{1mm} | \hspace{1mm} d^\sigma_f(x,y)<\delta\},
\end{equation}
where $\delta>0$ and $\sigma=s,u$.

We say that a foliation of $\mathcal{F}$ a manifold $M$ is \emph{quasi-isometric} if there exist constants $a>0$ and $b\geq0$ such that for $x$ in the universal cover $\Tilde{M}$ and any $y\in \Tilde{W}(x)$, we have
$\Tilde{d}_{\Tilde{W}(x)}(x,y)\leq a\Tilde{d}(x,y)+b$, where $\Tilde{W}(x)$ denotes the lift of the leaf $W(x)$ to $\Tilde{M}$, $\Tilde{d}_{\Tilde{W}(x)}$ is the induced distance on $\Tilde{W}(x)$, and $\Tilde{d}$ is the lifted metric on $\Tilde{M}$. It is well known that the stable and unstable foliations of an Anosov diffeomorphism are quasi-isometric; see for instance the paper of Brin, Burago, and Ivanov \cite{Quasi-Isometry} for a proof which applies more generally to partially hyperbolic diffeomorphisms on $\mathbb{T}^3$.

The property of a diffeomorphism being Anosov is robust. That is, if $f$ is a $C^r$-Anosov diffeomorphism ($r\geq 1$) and $g$ is a $C^r$-diffeomorphism that is sufficiently $C^1$ close to $f$, then $g$ is also an Anosov diffeomorphism. In fact, Anosov diffeomorphisms enjoy a stronger property known as structural stability: $g$ will actually be topologically conjugate to $f$ by a conjugacy in the homotopy class of the identity. It is important in the present work that all of this can also be made quantitative:
\begin{theorem}[Strong Structural Stability]
    \label{thm: Strong Structural Stability}
    Let $f:M\rightarrow M$ be a $C^r$ Anosov diffeomorphism ($r\geq1$). For every $\delta>0$ there exists $\varepsilon>0$ such that if $g:M\rightarrow M$ is a $C^r$ diffeomorphism satisfying $d_{C^1}(f,g)<\varepsilon$, then $g$ is Anosov and there exists a homeomorphism $h:M\rightarrow M$ such that
    $d_{C^0}(h,\id)+d_{C_0}(h^{-1},\id)<\delta$. Moreover, $h$ is unique when $\delta$ is small enough.
\end{theorem}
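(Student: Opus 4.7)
The plan is the classical Anosov--Moser argument in two parts: first show openness of the Anosov condition, then construct $h$ as a fixed point of a suitable contraction on a space of continuous sections.

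For openness, I would introduce invariant cone fields built from the $f$-splitting $E^u\oplus E^s$: define stable and unstable cones $C^s(x), C^u(x)$ of small angular width around $E^s(x), E^u(x)$. Hyperbolicity gives strict cone invariance, i.e.\ $Df(x)C^u(x)\subset\mathrm{int}\,C^u(f(x))$ and $Df^{-1}(x)C^s(x)\subset\mathrm{int}\,C^s(f^{-1}(x))$, together with uniform expansion/contraction bounds inside the cones. By continuity of the derivative in $g$, if $d_{C^1}(f,g)$ is small enough then $Dg$ and $Dg^{-1}$ still preserve these cones with slightly perturbed rates. A standard graph-transform / nested-intersection argument then extracts continuous $Dg$-invariant line bundles $E^u_g, E^s_g$ close to $E^u_f, E^s_f$, and the rate estimates transfer with only a small loss, so $g$ is Anosov.

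For the conjugacy, lift everything to the universal cover (so $M=\mathbb{T}^n$ becomes $\mathbb{R}^n$; the general case uses local trivializations of a tubular neighborhood of the diagonal in $M\times M$). Seek $\tilde h=\id+u$ with $u$ a bounded continuous $\mathbb{Z}^n$-periodic map into $\mathbb{R}^n$. The conjugacy equation $\tilde g \tilde h=\tilde h \tilde f$ rewrites as $(Lu)(x):=u(\tilde f(x))-D\tilde f(x)\cdot u(x)=\phi(x)+N(u)(x)$, where $\phi:=\tilde g-\tilde f$ is small in $C^0$ and $N$ is Lipschitz on a small ball with small Lipschitz constant. The key linear-algebraic step is to show that $L$ preserves the splitting $E^u\oplus E^s$ and is invertible on each summand by a telescoping series: on the stable part one sums a Neumann-type series in $Df$ (which contracts at rate $\nu_0<1$), on the unstable part the analogous series in $Df^{-1}$. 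This produces a bounded inverse with norm depending only on the Anosov constants $\mu_0,\nu_0,\mu_1,\nu_1,C_{\mathcal{U}}$ of $f$. Rewriting as $u=L^{-1}\phi+L^{-1}N(u)$ and applying Banach's fixed-point theorem on a small closed ball of $C^0(M,\mathbb{R}^n)$ produces a unique small $u$, hence a continuous $h$ with $d_{C^0}(h,\id)$ controlled by $\|\phi\|_{C^0}\lesssim\varepsilon$; choosing $\varepsilon$ small gives $d_{C^0}(h,\id)<\delta/2$.

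To upgrade $h$ to a homeomorphism, run the same construction with $f,g$ interchanged (legitimate because $g$ is itself Anosov by the first step) to produce $h'$ with $h'g=fh'$ and $d_{C^0}(h',\id)$ also small. Then $h\circ h'$ and $\id$ both satisfy the self-conjugacy equation $\psi g=g\psi$ and are $C^0$-close to $\id$; the uniqueness clause of the fixed-point argument, applied now to the analogous functional equation for $g$, forces $h\circ h'=\id$, and similarly $h'\circ h=\id$. The uniqueness statement in the theorem follows from the same reasoning: any two small conjugacies differ by a small self-conjugacy, which must be the identity. The main obstacle throughout is establishing the invertibility of $L$ with a norm bound uniform over a $C^1$-neighborhood of $f$; once that single estimate is in place, every other step is a routine application of the contraction mapping principle.
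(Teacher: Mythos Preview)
Your outline is correct and follows the classical Anosov--Moser contraction-mapping approach. The paper does not give its own proof of this result but simply cites Katok--Hasselblatt, Theorem 18.2.1, which proceeds by exactly the hyperbolic fixed-point argument you describe.
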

See Katok and Hasselblatt \cite{katok_hasselblatt_1995} Theorem 18.2.1 for a proof. Together with the Arzel\`a-Ascoli argument described in the next section and Lemma \ref{lem:Uniform Holder}, this will account for much of the uniformity arguments in proving Theorem \ref{Thm:Main Theorem}.

For any Anosov diffeomorphism $f:\mathbb{T}^2\rightarrow\mathbb{T}^2$, there is a unique linear Anosov diffeomorphism $F_L$ in the homotopy class of $f$. By a theorem of Franks \cite{Franks69} and Mannings \cite{Manning74}, there exists a conjugacy $H$ homotopic to the identity conjugating $f$ and $F_L$: $H\circ f=F_L\circ H$. Moreover, the number of such conjugacies homotopic to the identity is finite and is determined by the number of fixed points of $F_L$. In particular, any two Anosov diffeomorphisms in the homotopy class of a fixed linear Anosov diffeomorphism are conjugate, and moreover, there are only finitely many such conjugacies within the homotopy class of the identity. We will always assume that the conjugacy $h$ between two $f,g\in\mathcal{U}$ is homotopic to the identity.

\subsection{Regularity of the Stable and Unstable Foliations}
\label{sec: Regularity of the Stable and Unstable Foliations}

As previously mentioned, the leaves of the stable and unstable foliations of an Anosov diffeomorphism $f$ will be as smooth as $f$ itself; in our setting, this means that the leaves $W^s_f(x)$ and $W^u_f(x)$ will be $C^2$ submanifolds of $\mathbb{T}^2$ for every $x$. The regularity of the foliations themselves is a much more delicate matter. It is well known that there exists $0<\alpha<1$ (which can be made explicit in terms of the rates of expansion and contraction) such that the stable and unstable foliations are $\alpha$-H\"older continuous. In general, however, it is rare for the regularity to exceed this except in special circumstances.

In low dimensions, and in particular with the area-preserving hypothesis, we in fact have higher regularity of the foliations. For a general Anosov diffeomorphism of $\mathbb{T}^2$, it follows from Hasselblatt \cite{Hasselblatt_1994} that the stable and unstable foliations are both $C^{1+\varepsilon}$ for some $\varepsilon>0$.

When $f$ is area-preserving we can say even more. A function $\psi$ is said to belong to the Zygmund class (written $\psi\in C^{Zyg}$) if it has modulus of continuity $O(x\log|x|)$. This in particular it implies that $\psi$ is $\alpha$-H\"older continuous for every $\alpha<1$, but it is a weaker condition than being Lipschitz continuous. When $f:\mathbb{T}^2\rightarrow\mathbb{T}^2$ is a $C^k$ ($k\geq 2$) area-preserving Anosov diffeomorphism, then it follows from a result of Katok and Hurder \cite{Katok_Hurder} that stable and unstable foliations are $C^{1+Zyg}$ (see also \cite{Foulon_Hasselblatt} for the corresponding result for Anosov flows on $\mathbb{T}^3$).

The importance of the regularity of the stable and unstable foliations enters our construction in Section \ref{sec: Proof of the Main Theorem: The Construction} through the holonomy maps.
Let $b\in W^s_f(a)$. Then the stable holonomy map of $f$ from $a$ to $b$ is the map $\hol^{s,f}_{a,b}:W^u_{f,loc}(a)\rightarrow W^u_{f,loc}(b)$ such that $\hol^{s,f}_{a,b}(a)=b$ obtained by sliding points of $W^u_{f,loc}$ along stable leaves until they intersect $W^u_{f,loc}(b)$. When the stable and unstable foliations have global product structure (which in particular is always the case for Anosov diffeomorphisms on tori), then the stable holonomy can be continuously extended to a map defined on the entire unstable manifold of $f$: $\hol^{s,f}_{a,b}:W^u_{f}(a)\rightarrow W^u_{f}(b)$. We define the unstable holonomy between stable manifolds analogously.

The stable and unstable holonomies play a crucial role in the construction of the conjugacy $\overline{h}_N$ and it is therefore important that these holonomies be at least $C^{1+\varepsilon}$. The holonomy maps are as regular as the foliations themselves, and thus are $C^{1+\varepsilon}$, and $C^{1+Zyg}$ for area-preserving diffeomorphisms. Two points deserve emphasis here. First, if we wish to show that $\overline{h}_N$ is $C^k$ for $k\geq 2$ and get corresponding estimates on $d_{C^k}(f,\overline{f}_N)$, we must have that the stable and unstable foliations of both $f$ and $g$ are $C^k$. Unfortunately, this only happens in the rare circumstance when both $f$ and $g$ are smoothly conjugated to their linear parts, and hence smoothly conjugated to each other. In general, it seems a different construction entirely would be necessary to establish higher regularity of $\overline{h}_N$. Second, our argument that both foliations are $C^{1+\varepsilon}$ heavily relies on the fact that both stable and unstable distributions are one-dimensional. To generalize the construction to Anosov diffeomorphisms on higher dimensional tori, we would need additional bunching assumptions on the expansion and contraction rates to ensure enough regularity of the holonomy maps. However, even under such assumptions, it is not immediately clear how to generalize our techniques to higher dimensions.

To prove that $\overline{h}_N$ is $C^{1+\alpha}$ in Theorem \ref{Thm:Main Theorem}, we will first prove regularity along the stable and unstable foliations separately with uniformity. We say a continuous function $\psi$ belongs to $C^{1+\alpha}_s$ if $\psi$ is uniformly $C^{1+\alpha}$ when restricted to any stable leaf; that is, there exists some constant $C>0$ such that for every stable leaf $W^s_f$, we have
$|D_s\psi|_{W^s_f}|\leq C$. We similarly define $C^{1+\alpha}_u$. The following result of Journ\'e \cite{Journé1988} tells us that to verify smoothness of $\overline{h}_N$, it is enough to establish uniform regularity on each foliation:
\begin{theorem}[Journ\'e's Lemma]
    \label{thm: Journe Lemma}
    $C^{1+\alpha}=C^{1+\alpha}_s\cap C^{1+\alpha}_u$ for every $0<\alpha<1$.
\end{theorem}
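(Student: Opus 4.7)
The inclusion $C^{1+\alpha} \subseteq C^{1+\alpha}_s \cap C^{1+\alpha}_u$ is immediate: restricting a globally $C^{1+\alpha}$ function to a uniformly $C^2$ stable or unstable leaf preserves $C^{1+\alpha}$ regularity with a uniform bound. The content is the reverse inclusion, and my plan is to establish it by producing at each point $p$ a candidate affine approximation whose error decays like $d(p,\cdot)^{1+\alpha}$ and whose coefficients vary $\alpha$-H\"older in $p$; the Campanato characterization of $C^{1+\alpha}$ then delivers the conclusion.

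For localization, I would use the $C^{1+\varepsilon}$ regularity of $\mathcal{F}^{s,f}$ and $\mathcal{F}^{u,f}$ recalled in Section \ref{sec: Regularity of the Stable and Unstable Foliations} to cover $\mathbb{T}^2$ by finitely many charts $\Phi:U\to(-\delta,\delta)^2$ straightening the two foliations to the horizontal and vertical coordinate foliations. Shrinking $\alpha$ if necessary so that $\alpha \leq \varepsilon$, it suffices to prove the planar statement: if $\psi:(-\delta,\delta)^2\to\mathbb{R}$ is uniformly $C^{1+\alpha}$ along every horizontal and every vertical line, then $\psi$ is $C^{1+\alpha}$ on $(-\delta,\delta)^2$. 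Let $\partial_u\psi$ and $\partial_v\psi$ denote the leafwise derivatives, defined pointwise by hypothesis, and set
\[
L_p(u,v) = \psi(p) + \partial_u\psi(p)(u-u_0) + \partial_v\psi(p)(v-v_0)
\]
at $p=(u_0,v_0)$. To verify $|\psi(q) - L_p(q)| \leq C|q-p|^{1+\alpha}$, I would connect $p$ to $q$ by an L-path consisting of a horizontal segment followed by a vertical segment, apply the leafwise $C^{1+\alpha}$ Taylor expansion along each leg, and sum. The only obstruction is that the second step uses $\partial_v\psi$ at the corner of the L-path rather than at $p$; this discrepancy absorbs into $O(|q-p|^{1+\alpha})$ precisely when $\partial_v\psi$ is $\alpha$-H\"older in the transverse $u$-direction.

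The main obstacle is therefore this transverse H\"older estimate, and my approach is the classical double-expansion trick. For base points $(u_0,v_0)$ and $(u_0, v_0+\eta)$ on the same vertical leaf and a horizontal increment $h$, the leafwise $C^{1+\alpha}$ expansion along horizontal leaves gives
\[
[\psi(u_0+h, v_0+\eta) - \psi(u_0, v_0+\eta)] - [\psi(u_0+h, v_0) - \psi(u_0, v_0)] = h\bigl(\partial_u\psi(u_0,v_0+\eta) - \partial_u\psi(u_0,v_0)\bigr) + O(h^{1+\alpha}),
\]
while re-expressing the same quantity by vertical Taylor expansions yields $\eta\bigl(\partial_v\psi(u_0+h,v_0) - \partial_v\psi(u_0,v_0)\bigr) + O(\eta^{1+\alpha})$. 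Equating these two expressions produces a coupled inequality between the transverse moduli of continuity $M_u$ and $M_v$ of $\partial_u\psi$ and $\partial_v\psi$; a careful choice of the balance between $h$ and $\eta$ and a bootstrap exploiting the symmetric coupling yield the H\"older estimates $M_u(\eta) \leq C|\eta|^\alpha$ and $M_v(h) \leq C|h|^\alpha$. Once both partial derivatives are $\alpha$-H\"older in both variables, the L-path expansion from the previous paragraph upgrades to a full $C^{1+\alpha}$ bound on $\psi$ in each chart; pulling back through the $C^{1+\varepsilon}$ straightening maps completes the proof on $\mathbb{T}^2$.
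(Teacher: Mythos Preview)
The paper does not prove this statement; it records Journ\'e's lemma as background and cites \cite{Journé1988}. So there is no argument in the paper to compare against --- you are attempting to supply a proof where the paper simply quotes one.

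Your reduction to the axis-parallel planar problem is a legitimate strategy in this particular setting, but two caveats are worth flagging. First, the simultaneous straightening chart $\Phi^{-1}$ you invoke is itself only seen to be $C^{1+\varepsilon}$ along horizontals and along verticals separately (it is built from holonomies composed with leaf parametrizations), so proving $\Phi^{-1}$ is jointly $C^{1+\varepsilon}$ already requires the planar case of the lemma; this is not circular, but it means the entire argument rests on that planar statement. Second, the restriction $\alpha\le\varepsilon$ is harmless here because in the area-preserving context the foliations are $C^{1+\mathrm{Zyg}}\subset C^{1+\alpha}$ for every $\alpha<1$, but it does mean your argument proves strictly less than Journ\'e's theorem, which requires only continuity of the foliations and smoothness of the leaves.

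The genuine gap is the bootstrap. Equating your two expansions of the mixed second difference yields
\[
\bigl|\,h\,M_u(\eta)-\eta\,M_v(h)\,\bigr|\le C\bigl(h^{1+\alpha}+\eta^{1+\alpha}\bigr),
\]
but this coupled inequality does not by itself force $M_u(\eta)\le C\eta^\alpha$. Taking $h=\eta$ gives only $|M_u(\eta)-M_v(\eta)|\le C\eta^\alpha$, which controls the \emph{difference} of the two transverse moduli, not either one; iterating at dyadic scales $h=2^{-k}\eta$ produces a recursion of the shape $M(\eta)\le 2M(\eta/2)+C\eta^\alpha$, whose iterates diverge rather than summing to $O(\eta^\alpha)$. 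The identity you wrote is indeed the starting point of Journ\'e's proof, but closing the estimate requires a genuinely different mechanism: Journ\'e builds first-order polynomial approximations on a geometric grid of leaf intersections near $p$ and runs an induction comparing approximations at adjacent scales, and it is this grid argument --- not a direct balance of $h$ against $\eta$ --- that produces the $(1+\alpha)$-decay. As written, ``a careful choice of the balance between $h$ and $\eta$ and a bootstrap exploiting the symmetric coupling'' is precisely where the substance of the proof lies, and it has not been carried out.
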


\subsection{SRB Measure and Area}
\label{sec: SRB Measure and area}
In the present paper we will be primarily concerned with area-preserving Anosov diffeomorphisms. However, as mentioned in Remark \ref{rem: area-Preserving Hypothesis}, this hypothesis will not be needed for the construction of the conjugacy $\overline{h}_N$. We will therefore briefly review the definition and main properties of SRB measures that will be needed for the construction in Section \ref{sec: Proof of the Main Theorem: The Construction}. For a more detailed discussion of SRB measures, see the survey of Young \cite{Young}.

Intuitively, the SRB measure of a diffeomorphism $f$ is the invariant measure most compatible with area when area is not preserved. For transitive $C^{1+\alpha}$ Anosov diffeomorphisms, there are several definitions of SRB measures, and it is a highly non-trivial result that they are all equivalent. For our purposes we will define the SRB measure in terms of its conditional measures along the unstable foliation (we will later on use the characterization of the SRB measure as the equilibrium state corresponding to the geometric potential, see Section \ref{sec: Proof of EE}). To make this precise, we begin by recalling the definition of a partition subordinate to the unstable foliation.
\begin{definition}
    \label{def: Partition subordinate to unstable}
    Given an Anosov diffeomorphism $f:M\rightarrow M$ and a measure $\mu$, a measurable partition $\xi$ is said to be subordinate to the unstable foliation $\mathcal{F}^{u,f}$ whenever for $\mu$ a.e. $x$ we have
    \begin{enumerate}
        \item $\xi(x)\subset W^u_f(x)$
        \item $\xi(x)$ contains an open subset of a neighborhood of $x$ in $W^u_f(x)$ in the submanifold topology.
    \end{enumerate}
\end{definition}

\begin{definition}
    Let $f:M\rightarrow M$ be a $C^2$ Anosov diffeomorphism. An $f$-invariant measure $\mu_f$ is said to be an SRB measure if for every measurable partition $\xi$ subordinate to $\mathcal{F}^{u,f}$, the conditional measure of $\mu_f$ on the partition element $\xi(x)$, denoted by $\mu_f^{\xi(x)}$, is absolutely continuous with respect to the Lebesgue measure induced by the Riemannian metric on $W^u_f(x)$.
\end{definition}
By taking a sequence of increasing and subordinate partitions to $\mathcal{F}^{u,f}$, we can define a ``maximal" conditional measure $\mu^u_{f,x}$ on $W^u_f(x)$ (See da la Llave \cite{delaLlave} Section 3 for details). We will from now on work only with these ``maximal" conditional measures.

The densities of the conditional measures $\mu^u_{f,x}$ are unique up to scalar multiples. If $x_0\in W^u_f(x)$ then the we denote by $\omega^u_f(x,x_0)$ the density of $\mu^u_{f,x_0}$ with respect to Lebesgue such that $\omega^u_f(x_0,x_0)=1$. Then $\omega^u_f(x,x_0)$ is $C^1$ along $W^u_f(x_0)$ and is given by the formula
\begin{equation}
    \label{eqn: Conditional Density Normalized to 1 at a point Formula}
    \omega^u_f(x,x_0)=\prod_{i=1}^\infty \frac{D_uf(f^{-n}(x))}{D_uf(f^{-n}(x_0))}.
\end{equation}

The area-preserving hypothesis means that each $f\in\mathcal{U}$, the SRB measure $\mu_f$ of $f$ is in the smooth measure class of the Lebesgue measure on $\mathbb{T}^2$; that is $d\mu_f=\varphi_fdm$, where $\varphi_f\in C^1$ is strictly positive (Note however that $f$ may not preserve the Lebesgue measure itself). Then there exists $c_f\geq 1$ such that $c_f^{-1}\leq\varphi_f\leq c_f$. We will need in the proof of Theorem \ref{Thm:Main Theorem} uniformity of $C_f$.
\begin{lemma}
\label{lem:Uniform bounds on area Density}
    There exists a constant $c_{\mathcal{U}}>1$ such that for every $f\in\mathcal{U}$, its invariant density $\varphi_f$ satisfies
    $c_{\mathcal{U}}^{-1}<\varphi_f<c_{\mathcal{U}}$.
\end{lemma}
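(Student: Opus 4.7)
The plan is to reduce the claim to a local uniform bound via the Arzelà--Ascoli argument of Section \ref{sec: The Arzela-Ascoli Argument}. Since $\mathcal{U}$ is closed and bounded in $C^2$, Corollary \ref{cor: Compact Embedding} makes it $C^1$-compact, so it suffices to exhibit, for each fixed $f_0\in\mathcal{U}$, a $C^1$-neighborhood $V(f_0)\subset\mathcal{U}$ and a constant $c(f_0)>1$ such that $c(f_0)^{-1}<\varphi_g<c(f_0)$ for every $g\in V(f_0)$. A finite subcover of $\mathcal{U}$ then produces the desired uniform constant $c_{\mathcal{U}}$.

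Fix $f_0\in\mathcal{U}$. Since $\varphi_{f_0}\in C^1$ is strictly positive on the compact torus, there is $c_0>1$ with $c_0^{-1}<\varphi_{f_0}<c_0$ pointwise. The local bound will follow once I establish that $g\mapsto\varphi_g$ is continuous from a $C^1$-neighborhood of $f_0$ (with the induced topology) into $C^0(\mathbb{T}^2)$. Indeed, shrinking $V(f_0)$ so that $\|\varphi_g-\varphi_{f_0}\|_{C^0}<\tfrac{1}{2}c_0^{-1}$ on $V(f_0)$ immediately yields $(2c_0)^{-1}<\varphi_g<2c_0$, and we may take $c(f_0)=2c_0$.

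The only non-trivial step is thus the $C^0$-continuity of $g\mapsto\varphi_g$, which is where I expect the bulk of the work to lie. For $C^2$ Anosov diffeomorphisms on $\mathbb{T}^2$ preserving a smooth measure, this is a classical statistical stability statement. The cleanest route is through the transfer operator $\mathcal{L}_g\psi(x)=\sum_{g(y)=x}\psi(y)/|\det Dg(y)|$, of which $\varphi_g$ is the unique strictly positive leading eigenfunction (normalized by $\int\varphi_g\,dm=1$); on an appropriate anisotropic Banach space this operator has a spectral gap that is uniform in a $C^1$-neighborhood of $f_0$ thanks to the uniform Anosov constants (\ref{eqn:Anosov Condition}), and continuity of $g\mapsto\mathcal{L}_g$ in operator norm propagates to continuity of the leading eigenfunction in a norm stronger than $C^0$. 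A more hands-on alternative uses the product formula (\ref{eqn: Conditional Density Normalized to 1 at a point Formula}): the product defining $\omega^u_f(x,x_0)$ converges uniformly over $\mathcal{U}$ by (\ref{eqn:Anosov Condition}), each factor depends continuously on $f$ in $C^1$, and $\varphi_g$ can then be recovered from the conditional densities along unstable leaves using the global product structure of the stable and unstable foliations.

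The main obstacle is simply packaging one of these frameworks cleanly enough to invoke it: every ingredient (spectral gap on anisotropic spaces, or uniform convergence of the $\omega^u$ product combined with disintegration) is standard for $C^2$ Anosov systems on $\mathbb{T}^2$, but each requires fixing a function space on which continuity is asserted. Once that is done, the Arzelà--Ascoli reduction of the first paragraph finishes the lemma.
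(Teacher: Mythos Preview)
Your strategy is valid in principle but takes a heavier route than the paper, and one of your technical claims needs adjustment. The assertion that $g\mapsto\mathcal{L}_g$ is continuous in \emph{operator norm} on an anisotropic space is generally false for composition-type operators; what holds is a weaker form of continuity, and one extracts continuity of the leading eigendata via a Keller--Liverani perturbation argument rather than naive spectral perturbation. With that correction your approach goes through, but it imports substantial machinery (anisotropic spaces, uniform Lasota--Yorke inequalities, Keller--Liverani) that the lemma does not actually require.

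The paper's argument is considerably more elementary and avoids statistical stability altogether. It uses that area-preservation is equivalent to $\log Jf$ being a coboundary, namely $\log Jf=\log\varphi_f\circ f-\log\varphi_f$. Telescoping this identity along a stable (resp.\ unstable) leaf gives
\[
|\log\varphi_f(x)-\log\varphi_f(y)|\le |Jf|_{\mathrm{Lip}}\sum_{n\ge 0} d(f^n(x),f^n(y))\le C\,d(x,y),
\]
with $C$ depending only on the uniform hyperbolicity constants (\ref{eqn:Anosov Condition}) and the $C^2$ bound on $\mathcal{U}$. Uniform local product structure (Theorem \ref{thm: Local Product Structure Uniform}) then upgrades this to $\varphi_f(x)\le\varphi_f(y)e^{2C\varepsilon}$ for any $x,y$ with $d(x,y)<\delta$, and a finite chain of $\delta$-balls (whose number $M$ is independent of $f$) extends this to all pairs. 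Finally, since $\int\varphi_f\,dm=1$ there is a point where $\varphi_f=1$, yielding $\varphi_f\le e^{2CM\varepsilon}$ uniformly. The upshot: the paper bounds the \emph{oscillation} of $\log\varphi_f$ directly and uniformly, bypassing any need to prove that $g\mapsto\varphi_g$ is continuous. Your second suggested route (via the product formula (\ref{eqn: Conditional Density Normalized to 1 at a point Formula})) is closer in spirit to this, but the cohomological-equation argument is even more direct.
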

To prove this lemma, we will need the following uniform version of local product structure for Anosov diffeomorphisms.
\begin{theorem}
    \label{thm: Local Product Structure Uniform}
    Let $f:M\rightarrow M$ be an Anosov diffeomorphism. There is a $C^1$ neighborhood $\mathcal{N}$ of $f$ and $\varepsilon_0>0$ such that the following holds: for every $0<\varepsilon<\varepsilon_0$ there exists a $\delta>0$ such that if $g\in\mathcal{N}$ and $x,y\in M$ are such that $d(x,y)<\delta$, then $W^s_{g,\varepsilon}(x)$ and $W^u_{g,\varepsilon}(y)$ intersect transversely.
\end{theorem}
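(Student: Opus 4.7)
The plan is to upgrade the classical (non-uniform) local product structure theorem by tracking that all constants depend only on the $C^1$ neighborhood $\mathcal{N}$ of $f$, not on the particular choice of $g\in\mathcal{N}$ or of the points $x,y\in M$. First I would invoke the continuous dependence of the Anosov splitting on the diffeomorphism in the $C^1$ topology: after shrinking $\mathcal{N}$, every $g\in\mathcal{N}$ is Anosov (by structural stability, Theorem \ref{thm: Strong Structural Stability}), and the distributions $E^s_g(x), E^u_g(x)$ depend $C^0$-continuously on $(g,x)\in\mathcal{N}\times M$. Since $M$ is compact and $E^s_f, E^u_f$ are everywhere transverse, by shrinking $\mathcal{N}$ further we may arrange a uniform lower bound $\theta_0>0$ on the angle between $E^s_g(x)$ and $E^u_g(x)$ for all $g\in\mathcal{N}$ and $x\in M$.

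Next I would establish a uniform Hadamard--Perron statement: there exists $\varepsilon_0>0$ such that for every $g\in\mathcal{N}$ and every $x\in M$, in the exponential chart $\exp_x$ at $x$, the local manifolds $W^s_{g,\varepsilon_0}(x)$ and $W^u_{g,\varepsilon_0}(x)$ are graphs of $C^1$ maps
\[
\psi^{s}_{g,x}:B_{E^s_g(x)}(0,\varepsilon_0)\to E^u_g(x),\qquad \psi^{u}_{g,x}:B_{E^u_g(x)}(0,\varepsilon_0)\to E^s_g(x),
\]
with $\psi(0)=0$, $D_0\psi=0$, and with $C^1$-norm bounded uniformly in $(g,x)$. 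This follows from the usual graph-transform proof of the stable manifold theorem, carried out with the uniform hyperbolicity constants $\mu_0,\nu_0,\mu_1,\nu_1$ available in $\mathcal{N}$. The uniformity in $g$ may be obtained either by explicitly tracking constants in the graph transform, or more slickly by the Arzel\`a--Ascoli/compactness argument of Section \ref{sec: The Arzela-Ascoli Argument} applied to the sections $(g,x)\mapsto\psi^{s,u}_{g,x}$.

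Given such $\varepsilon_0$, I would fix $0<\varepsilon<\varepsilon_0$ and analyze $W^s_{g,\varepsilon}(x)\cap W^u_{g,\varepsilon}(y)$ in the exponential chart at $x$. Choosing $\delta$ small enough (depending only on $\varepsilon$ and the chart size), $y$ lies in this chart and $W^u_{g,\varepsilon}(y)$ is still expressible in the chart as a graph of a $C^1$ function over a domain in $E^u_g(x)$, translated by a vector of norm at most $Cd(x,y)$ and still nearly tangent to $E^u_g(x)$. The intersection of two such nearly-flat graphs over subspaces meeting at an angle at least $\theta_0$ is then the fixed point of a contraction whose existence, uniqueness, and transversality are guaranteed by the Banach fixed-point theorem and the implicit function theorem; the required smallness of $\delta$ depends only on $\varepsilon$, $\theta_0$, and the uniform Lipschitz bounds from the previous step.

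The main obstacle is the second step: obtaining the uniform $C^1$ bounds on the local invariant manifolds across all of $\mathcal{N}$, since one must rule out any degeneration of the constants in the graph transform as $g$ varies. The cleanest resolution is the compactness argument from Section \ref{sec: The Arzela-Ascoli Argument}: around each single $g_0\in\mathcal{N}$, the standard proof of the stable manifold theorem is perturbatively stable and yields uniform constants on some small $C^1$ neighborhood; a finite subcover of a $C^1$-precompact shrinking of $\mathcal{N}$ then gives constants uniform over the whole of $\mathcal{N}$. Once these uniform constants are in hand, the third step above is essentially a quantitative transversality argument and goes through routinely.
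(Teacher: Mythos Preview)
Your proposal is correct and is the standard direct approach: establish uniform constants in the stable/unstable manifold theorem across $\mathcal{N}$ (via graph transform plus the Arzel\`a--Ascoli compactness argument), then read off the intersection as a contraction fixed point with quantitative transversality. This is essentially what a detailed self-contained proof would look like.

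The paper, however, does not prove this result at all. It simply cites Cooper for a proof with the uniformity statement, and remarks in one sentence that the result also follows from the standard local product structure of the \emph{fixed} $f$, strong structural stability (Theorem~\ref{thm: Strong Structural Stability}), and the uniform H\"older regularity of conjugacies (Lemma~\ref{lem:Uniform Holder}). The idea behind that remark is different from yours: rather than redoing the stable manifold theorem uniformly, one transports the local product structure of $f$ to nearby $g$ via the structural stability conjugacy $h_g$, using that $h_g$ is $C^0$-close to the identity and uniformly H\"older to control how $\delta$ and $\varepsilon$ transform. Your approach is more self-contained and avoids any forward reference to Lemma~\ref{lem:Uniform Holder}; the paper's suggested alternative is shorter to state but leans on machinery developed elsewhere in the paper.
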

For a proof of local product structure including the uniformity statement, see Cooper \cite{cooper2021structural} Theorem 4.9. This result also follows from the standard local product structure of a fixed $f$, strong structural stability of $f$, and Theorem \ref{lem:Uniform Holder}.

\begin{proof}[Proof of Lemma \ref{lem:Uniform bounds on area Density}]
    Let $Jf$ denote the Jacobian of $f$ with respect to the standard metric on $\mathbb{T}^2$. The condition that $f$ is area-preserving is equivalent to $Jf$ being a coboundary over $f$; that is, there exists a continuous function $u_f:\mathbb{T}^2\rightarrow\mathbb{R}$ such that $Jf=u_f\circ f-u_f$. It can be shown in fact that $u_f=\log(\varphi_f)$. 

    Now let $x\in\mathbb{T}^2$ be arbitrary and $y\in W^u_f(x)$. Then using the cohomological equation we have
    $$
    \log(\varphi_f(x))-\log(\varphi_f(y))=\sum_{n=0}^{\infty}\left(Jf(f^n(x))-Jf(f^n(y))\right).
    $$
    Therefore 
    $$
    |\log(\varphi_f(x))-\log(\varphi_f(y))|\leq \sum_{n=0}^{\infty}\left|Jf(f^n(x))-Jf(f^n(y))\right|\leq
    $$
    $$
    |Jf|_{Lip}\sum_{n=0}^\infty d(f^n(x),f^n(y))\leq 
    |Jf|_{Lip}d(x,y)\sum_{n=0}^\infty \nu_1^n< Cd(x,y),
    $$
    where $C>0$ is uniform. Thus
    $$
    \varphi_f(x)\leq \varphi_f(y)e^{Cd(x,y)}
    $$
    whenever $y\in W^u_f(x)$. By replacing $f$ by $f^{-1}$ and noting that $\varphi_f$ is still the invariant density for $f^{-1}$, we conclude the same is true whenever $y\in W^s_f(x)$. Let $0<\varepsilon<\varepsilon_0$ and $\delta>0$ be as in Theorem \ref{thm: Local Product Structure Uniform}. Then if $d(x,y)<\delta$, $W^s_{f,\varepsilon}(x)\cap W^u_{f,\varepsilon}(y)=:{z}$, and we have 
    $$
    \varphi_f(x)<\varphi_f(z)e^{Cd(x,z)}<\varphi_f(y)e^{C(d(x,z)+d(z,y))}<\varphi_f(y)e^{2C\varepsilon}.
    $$
    By compactness, we may cover $\mathbb{T}^2$ by finitely $\delta$-balls. Then any two points $x,y\in\mathbb{T}^2$ can be connected by at most $M$ points $x=x_0, x_1,\cdots, x_k=y$, $k\leq M$, with $d(x_i,x_{i-1})<\delta$ for $i=1,\cdots k$, and moreover $M$ depends only on the choice of covering which is independent of $f$. Therefore,
    $$
    \varphi_f(x)<\varphi_f(x_1)e^{2C\varepsilon}<\cdots<\varphi_f(y)e^{2CM\varepsilon}.
    $$
    Since  $\varphi_f$ is a continuous density for a probability measure, there exists at least one $y\in\mathbb{T}^2$ such that $\varphi_f(y)=1$. Therefore, for every $x\in\mathbb{T}^2$, we have 
    $$
    \varphi_f(x)<e^{2CM\varepsilon},
    $$
    and the lower bound follows similarly.
\end{proof}

Finally, the SRB measure has an important property known as local product structure. To define it, we first recall the definition of a hyperbolic rectangle:
\begin{definition}
    \label{def: Rectangle}
    Let $\delta>0$ be as in Theorem \ref{thm: Local Product Structure Uniform}. A set $R\subset\mathbb{T}^2$ is called a rectangle if $\diam(R)\leq \delta$ and is closed under the local product structure bracket operation: if $x,y\in R$, then
    $[x,y]:=W^s_f(x)\cap W^u_f(y)$ exists and is contained in $R$. 
\end{definition}

\begin{definition}
    \label{def: Local Product Structure of Measure}
    A measure $\mu$ is said to have local product structure with respect to the stable and unstable foliations if for every $x\in\mathbb{T}^2$ and every rectangle $R$ containing $x$, there exists measures $\mu^u$ and $\mu^s$ such that
    $\mu|_R<<\mu^u\otimes\mu^s$.
\end{definition}
It is an important property that the SRB measure $\mu_f$ has local product structure. In fact, more can be said about the density $\frac{d\mu_f}{d(\mu^u\otimes\mu^s)}$. By Rokhlin's disintegration theorem (\cite{Rokhlin_1967}; also see Einsiedler and Ward \cite{einsiedler2010ergodic} Chapter 5 for a more recent exposition on conditional measures and disintegrations), we have 
$$
\int_R\psi d\mu_f=\int_{R\cap W^s_f(x)} \int_{R\cap W^u_f(y)} \psi(y,z) d\mu^u_{f,y}(z)d\hat{\mu}_f(y),
$$
where $\mu^u_{f,y}$ is the conditional measure of $\mu_f$ on $W^u_f(y)$, and $\hat{\mu}_f$ is the quotient measure defined by 
$\hat{\mu}_f(A):=\mu_f(\pi^{-1}(A))$ where $\pi:R\rightarrow W^s_f(x)$ is the projection onto the transversal. The holonomy map $\hol^{s,f}_{x,y}:W^u_f(x)\rightarrow W^u_f(y)$ is absolutely continuous with respect the conditional measures, and so we can write
$$
\int_{R\cap W^s_f(x)} \int_{R\cap W^u_f(y)} \psi(y,z) d\mu^u_{f,y}(z)d\hat{\mu}_f(y)=$$
$$
\int_{R\cap W^s_f(x)} \int_{R\cap W^u_f(x)} \left(\psi\frac{d(\hol^{s,f}_{x,y})_*\mu^u_{f,y}}{d\mu^u_{f,x}}\right)\circ\hol^{s,f}_{x,y}(y,z) d\mu^u_{f,x}(z)d\hat{\mu}_f(y).
$$
We have the following expression for the Radon-Nikodym derivative (see Barreira and Pesin \cite{barreira_Pesin} Theorem 4.4.1):
\begin{equation}
    \frac{d(\hol^{s,f}_{x,y})_*\mu^u_{f,y}}{d\mu^u_{f,x}}(z)=
    \prod_{k=0}^\infty\frac{D_uf^{-1}(f^k(\hol^{s,f}_{x,y}(z)))}{D_uf^{-1}(f^k(z))}.
\end{equation}
From this formula, it is easy to see that $\Psi_{f,x}(y,z):=\frac{d(\hol^{s,f}_{x,y})_*\mu^u_{f,y}}{d\mu^u_{f,x}}(z)$ is Lipschitz continuous in both $y$ and $z$. Moreover, the Lipschitz seminorm of $\Psi_{f,x}$ is uniformly bounded in both $x\in\mathbb{T}^2$ and $f\in\mathcal{U}$.

\section{Proof of Main Theorem: The Construction}
\label{sec: Proof of the Main Theorem: The Construction}
\subsection{Uniformity of the Initial Conjugacy}
\label{sec: Uniformity of the Initial Conjugacy}
Before we can begin constructing the new conjugacy $\overline{h}_N$, we first must investigate the regularity of our initial conjugacy $h$. It is well known that any topological conjugacy between two Anosov diffeomorphisms is automatically H\"older continuous; that is, there exists $\alpha>0$ such that for all $x,y\in\mathbb{T}^2$, $d(h(x),h(y))\leq |h|_\alpha d(x,y)^\alpha$. See for instance Katok and Hasselblatt \cite{katok_hasselblatt_1995} Theorem 19.1.2 for a proof of this important property in a more general setting. At several steps the H\"older exponent $\alpha$ of $h$ and the associated seminorm $|h|_\alpha$ will come up when establishing the uniformity claims of Theorem \ref{Thm:Main Theorem}.
However, given any two conjugate Anosov diffeomorphisms $f,g\in\mathcal{U}$, $hf=gh$, it is not clear a priori that they are all H\"older continuous with the same H\"older exponent. The goal of this subsection is to prove the following lemma:
\begin{lemma}
\label{lem:Uniform Holder}
    There exists $\alpha_0>0$ and $C_{\alpha_0}>0$ such that for any $f,g\in \mathcal{U}$ and any conjugacy $h$ between $f$ and $g$, $h$ is $\alpha_0$-H\"older continuous with $C_{\alpha_0}^{-1}\leq |h|_{\alpha_0}\leq C_{\alpha_0}$.
\end{lemma}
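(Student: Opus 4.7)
The plan is to apply the classical argument establishing H\"older continuity of topological conjugacies between Anosov diffeomorphisms (Katok--Hasselblatt, Theorem 19.1.2) to each $h$ coming from a pair $f,g\in\mathcal{U}$, and then verify that every constant entering the argument is uniform on $\mathcal{U}$.

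For the upper bound, given $x,y\in\mathbb{T}^2$ with $d(x,y)\leq\delta$, where $\delta$ is the uniform local product structure scale of Theorem \ref{thm: Local Product Structure Uniform}, I would form $z = W^s_{f,\text{loc}}(x)\cap W^u_{f,\text{loc}}(y)$ and estimate $d(h(x),h(z))$ and $d(h(z),h(y))$ separately. For the stable piece, since $z\in W^s_f(x)$ I would iterate backward up to the largest $N$ for which $f^{-N}(x)$ and $f^{-N}(z)$ remain within $\delta$; by (\ref{eqn:Anosov Condition}) this $N$ grows like $\log(1/d(x,z))/\log(1/\mu_0)$ with constants uniform in $\mathcal{U}$. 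Because $h$ conjugates stable foliations, $h(f^{-N}(z))$ lies on the stable leaf of $h(f^{-N}(x))$ in $g$, and reapplying $g^N$ contracts their leafwise $g$-distance by a factor at most $C_{\mathcal{U}}\nu_0^N$. The leafwise distance at the separated scale is bounded by a uniform constant, using quasi-isometry of the stable foliation of $g$ together with the trivial ambient bound $\diam(\mathbb{T}^2)$. Putting this together gives $d(h(x),h(z))\leq C\,d(x,z)^{\alpha_s}$ with $\alpha_s = \log\nu_0/\log\mu_0\in(0,1)$. A symmetric forward-time argument for the unstable piece produces $\alpha_u = \log\mu_1/\log\nu_1\in(0,1)$; setting $\alpha_0 := \min(\alpha_s,\alpha_u)$ and applying the triangle inequality yields the H\"older upper bound with $|h|_{\alpha_0}\leq C_{\alpha_0}$ uniformly over $\mathcal{U}$.

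For the lower bound on $|h|_{\alpha_0}$, I would exploit the fact that $h$ is a homeomorphism of $\mathbb{T}^2$ homotopic to the identity, hence surjective onto $\mathbb{T}^2$. Choose $p,q\in\mathbb{T}^2$ with $d(p,q) = \diam(\mathbb{T}^2)$ and set $x = h^{-1}(p)$, $y = h^{-1}(q)$; then $d(x,y)\leq\diam(\mathbb{T}^2)$ while $d(h(x),h(y)) = \diam(\mathbb{T}^2)$, so
\[
|h|_{\alpha_0} \;\geq\; \frac{d(h(x),h(y))}{d(x,y)^{\alpha_0}} \;\geq\; \diam(\mathbb{T}^2)^{1-\alpha_0} \;>\; 0,
\]
a positive constant depending only on $\mathcal{U}$ through $\alpha_0$.

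The main obstacle is the uniformity bookkeeping: every constant entering the H\"older estimate---the local product scale $\delta$, the bounded distortion between leafwise and ambient distances at scale $\delta$, the quasi-isometry constants of the stable and unstable foliations, and the Anosov rates $\mu_0,\nu_0,\mu_1,\nu_1$ of (\ref{eqn:Anosov Condition})---must be chosen independently of $f,g\in\mathcal{U}$. This is precisely what the Arzel\`a--Ascoli compactness argument of Section \ref{sec: The Arzela-Ascoli Argument} provides, together with Theorem \ref{thm: Local Product Structure Uniform} and the fact that the quasi-isometry constants of the invariant foliations of any Anosov diffeomorphism on $\mathbb{T}^2$ are controlled by the common linearization $F_L$.
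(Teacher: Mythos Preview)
Your overall strategy---iterate until the leafwise distance reaches a fixed scale, bound the image distance there, then iterate back---is exactly the paper's approach (see the proof of Lemma~\ref{lem:Uniform Holder} from Lemma~\ref{lem: Non local Lipschitz along unstables}), and your lower bound via the diameter is a clean way to handle the inequality $|h|_{\alpha_0}\geq C_{\alpha_0}^{-1}$.

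However, there is a genuine gap in the step you label ``the leafwise distance at the separated scale is bounded by a uniform constant, using quasi-isometry of the stable foliation of $g$ together with the trivial ambient bound $\diam(\mathbb{T}^2)$.'' Quasi-isometry is a statement on the universal cover, not on $\mathbb{T}^2$: on the torus two points on the same stable leaf can have arbitrarily large leafwise distance while their ambient distance is bounded by $\diam(\mathbb{T}^2)$. So you must lift. On the cover you need to bound $\tilde d\bigl(\tilde h(\tilde f^{-N}(\tilde x)),\,\tilde h(\tilde f^{-N}(\tilde z))\bigr)$, and since $\tilde h=\id+(\tilde h-\id)$ with $\tilde h-\id$ $\mathbb{Z}^2$-periodic, this distance is at most $\tilde d(\tilde f^{-N}(\tilde x),\tilde f^{-N}(\tilde z))+2\|\tilde h-\id\|_\infty$. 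The first term is controlled, but $\|\tilde h-\id\|_\infty$ is \emph{not} bounded by $\diam(\mathbb{T}^2)$; it depends on the particular conjugacy $h$, and its uniform boundedness over all pairs $f,g\in\mathcal U$ is precisely the nontrivial content you have not supplied.

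This is exactly what the paper isolates as Lemma~\ref{lem: Non local Lipschitz on universal cover}: one uses strong structural stability (Theorem~\ref{thm: Strong Structural Stability}) to control $h$ in a $C^1$-neighborhood of each fixed pair, and then the Arzel\`a--Ascoli compactness argument to make the bound uniform on $\mathcal U$. Your final paragraph invokes Arzel\`a--Ascoli for the hyperbolicity constants and the local product scale, but does not apply it to the displacement of $\tilde h$ itself, which is where the real work lies. Once you insert that argument, your proof becomes complete and essentially coincides with the paper's.
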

Actually, we will only need uniformity of $h$ restricted to stable and unstable manifolds, from which Lemma \ref{lem:Uniform Holder} follows. To this end, we will prove uniformity along unstable manifolds. The case of stable manifolds is completely symmetric. We will prove the following:

\begin{lemma}
\label{lem: Non local Lipschitz along unstables}
   There exists $\delta>0$ and $C_\delta>0$ such that $d^u_g(h(x),h(y))\leq C_\delta d^u_f(x,y)$ whenever $d^u_f(x,y)\geq\delta$.
\end{lemma}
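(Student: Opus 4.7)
The plan is to lift everything to the universal cover $\mathbb{R}^2$ and use the uniform quasi-isometry of the unstable foliation together with the fact that any conjugacy homotopic to the identity lifts to a map at bounded distance from the identity. First I would fix compatible lifts $\tilde{f},\tilde{g},\tilde{h}:\mathbb{R}^2\to\mathbb{R}^2$ of $f,g,h$, chosen so that $\tilde{h}$ commutes with $\mathbb{Z}^2$-translations; this is possible because $h$ is homotopic to the identity. Then $\tilde{h}-\id$ is $\mathbb{Z}^2$-periodic, hence bounded on $\mathbb{R}^2$, and $\tilde{h}$ sends unstable leaves of $\tilde{f}$ to unstable leaves of $\tilde{g}$.

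The first key input is a uniform bound $K_{\mathcal{U}}$ with $\sup_{p\in\mathbb{R}^2}|\tilde{h}(p)-p|\leq K_{\mathcal{U}}$ for every pair $f,g\in\mathcal{U}$ and every conjugacy $h$ homotopic to the identity. To obtain this I would factor $h=H_g^{-1}\circ H_f$, where $H_f$ is the Franks--Manning conjugacy from $f$ to the common linear part $F_L$, and use that $\tilde{H}_f-\id$ is bounded in terms of $d_{C^0}(\tilde{f},\tilde{F}_L)$, which is uniformly bounded over $\mathcal{U}$ by $C^0$-compactness (Corollary \ref{cor: Compact Embedding}). The second input is uniform quasi-isometry of the unstable foliation: there exist $a>0$, $b\geq 0$ such that $\tilde{d}^u_g(p,q)\leq a\tilde{d}(p,q)+b$ for every $g\in\mathcal{U}$ and every $p,q$ on the same unstable leaf of $\tilde{g}$. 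This follows from the Brin--Burago--Ivanov theorem applied in a $C^1$-neighborhood of each $f\in\mathcal{U}$, combined with the Arzel\`a--Ascoli/compactness argument of Section \ref{sec: The Arzela-Ascoli Argument}.

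Once these uniform constants are in hand, the estimate is immediate. Given $x,y\in\mathbb{T}^2$ with $y\in W^u_f(x)$ and $d^u_f(x,y)\geq\delta$, I would lift the leafwise geodesic from $x$ to $y$ in $W^u_f(x)$ to a path in the universal cover, obtaining lifts $\tilde{x},\tilde{y}$ with $\tilde{d}^u_f(\tilde{x},\tilde{y})=d^u_f(x,y)$. Since the ambient distance is dominated by the leafwise distance, $\tilde{d}(\tilde{x},\tilde{y})\leq \tilde{d}^u_f(\tilde{x},\tilde{y})$. Applying the bound $\sup|\tilde{h}-\id|\leq K_{\mathcal{U}}$ and then the quasi-isometry of $\tilde{\mathcal{F}}^{u,g}$ yields
\[
\tilde{d}^u_g(\tilde{h}(\tilde{x}),\tilde{h}(\tilde{y}))\leq a\,\tilde{d}(\tilde{h}(\tilde{x}),\tilde{h}(\tilde{y}))+b\leq a\bigl(\tilde{d}^u_f(\tilde{x},\tilde{y})+2K_{\mathcal{U}}\bigr)+b.
\]
Projecting back to $\mathbb{T}^2$ (the projection is a contraction in leafwise distance) and using $d^u_f(x,y)\geq\delta$ to absorb the additive constant, I obtain $d^u_g(h(x),h(y))\leq C_\delta\,d^u_f(x,y)$ with $C_\delta = a+\frac{2aK_{\mathcal{U}}+b}{\delta}$.

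The main obstacle is purely uniformity: the qualitative lift-and-quasi-isometry argument is classical, but the point of the lemma is that both $K_{\mathcal{U}}$ and the quasi-isometry constants $a,b$ can be chosen the same for every $f,g\in\mathcal{U}$. Verifying this requires the $C^1$-compactness of $\mathcal{U}$ together with robustness of the Franks--Manning conjugacy and of quasi-isometry under $C^1$-perturbations, and it is this packaging, rather than any single estimate, that is the substantive content of the proof.
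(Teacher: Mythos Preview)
Your proof is correct and follows essentially the same strategy as the paper: lift to $\mathbb{R}^2$, use uniform quasi-isometry of the unstable foliations (the paper's Lemma~\ref{lem:Universal Quasi-Isometric}), use that the lifted conjugacy is at uniformly bounded distance from the identity, and then absorb the additive constants using the lower bound $d^u_f(x,y)\geq\delta$. The only cosmetic difference is that the paper packages the second ingredient as a ``non-local Lipschitz on the universal cover'' statement (Lemma~\ref{lem: Non local Lipschitz on universal cover}) proved via strong structural stability and $\diam(\tilde h([0,1]^2))$, whereas you obtain the uniform bound $\sup|\tilde h-\id|\leq K_{\mathcal U}$ directly via the Franks--Manning conjugacy; both routes yield the same estimate.
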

Let us see how Lemma \ref{lem:Uniform Holder} follows from this:
\begin{proof}[Proof of Lemma \ref{lem:Uniform Holder}]
    It suffices to prove that $h$ is locally H\"older. Let $y\in W^u_f(x)$ be such that $d^u_f(x,y)\leq\delta$. Let $n\in\mathbb{N}$ be minimal such that $\delta<d^u_f(f^n(x),f^n(y))\leq\nu_1\delta$. Then since $h=g^{-n}hf^n$, we have 
    $$
    d^u_g(h(x),h(y))=d^u_g(g^{-n}hf^n(x),g^{-n}hf^n(y))\leq 
    \mu_0^{-n}d^u_g(h(f^n(x)),h(f^n(y)))\leq C_\delta\mu_0^{-n} d^u_f(f^n(x),f^n(y)).
    $$
    Let $\alpha_0>0$ be such that $\mu_0^{-1}\nu_1^{\alpha_0}=1$. Since $\delta<d^u_f(f^n(x),f^n(y))\leq\nu_1\delta$, we can find $C_{\alpha_0}>0$ such that $d^u_f(f^n(x),f^n(y))\leq C_{\alpha_0}d^u_f(f^n(x),f^n(y))^{\alpha_0}$. Then
    $$
    C_\delta\mu_0^{-n} d^u_f(f^n(x),f^n(y))<
    C_\delta C_{\alpha_0}\mu_0^{-n} d^u_f(f^n(x),f^n(y))^{\alpha_0}
    \leq$$
    $$
    C_\delta C_{\alpha_0}\mu_0^{-n}\nu_1^{\alpha_0n}d^u_f(x,y)^{\alpha_0}=C_\delta C_{\alpha_0}d^u_f(x,y)^{\alpha_0}.    
    $$
    Absorbing $C_\delta$ into $C_{\alpha_0}$, we are done.
\end{proof}

We prove Lemma \ref{lem: Non local Lipschitz along unstables} using the following two lemmas, the first of which says that the unstable foliations are uniformly quasi-isometric:
\begin{lemma}
    \label{lem:Universal Quasi-Isometric}
    $\exists a>0,b\geq0$ such that $\forall f\in\mathcal{U}$
    $\Tilde{d}^u_f(x,y)\leq a\Tilde{d}(x,y)+b$ for every $y\in \Tilde{W}^u_f(x)$.
\end{lemma}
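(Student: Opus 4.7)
The plan is to reduce to local uniformity in a $C^1$ neighborhood of any fixed $f_0\in\mathcal{U}$ and then invoke the compactness argument of Section \ref{sec: The Arzela-Ascoli Argument}. For a single Anosov $f$ on $\mathbb{T}^2$, I would use the classical route via the Franks--Manning conjugacy $H_f$ with the linear part $F_L$: since $H_f$ is homotopic to the identity, its lift $\tilde H_f:\mathbb{R}^2\to\mathbb{R}^2$ differs from the identity by a $\mathbb{Z}^2$-periodic, hence bounded, map, and I set $K_f:=\|\tilde H_f-\id\|_\infty$. Because $\tilde H_f$ sends $\tilde W^u_f(x)$ onto $\tilde W^u_{F_L}(\tilde H_f(x))$, which is an affine line parallel to the unstable eigenvector $v_u$ of $F_L$, each lifted leaf $\tilde W^u_f(x)$ stays within Hausdorff distance $2K_f$ of some line parallel to $v_u$. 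Combined with the existence of a narrow invariant unstable cone field around $v_u$ (a standard consequence of the uniform hyperbolicity estimates of (\ref{eqn:Anosov Condition})), this shows that $\tilde W^u_f(x)$ is a Lipschitz graph over the $v_u$-axis, with Lipschitz constant controlled only by the cone aperture. Reading off arc length from this graph gives $\tilde d^u_f(x,y)\leq a_f\tilde d(x,y)+b_f$ with constants depending only on $K_f$ and the cone aperture.

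The remaining task is to make these constants locally uniform. For the cone aperture, the invariant splitting $E^u_f\oplus E^s_f$ varies continuously in $C^0$ under $C^1$ perturbations of $f$ (a standard consequence of the cone-field characterization of hyperbolicity together with (\ref{eqn:Anosov Condition})), so in a sufficiently small $C^1$ neighborhood of any fixed $f_0$ one may take a single cone around $v_u$ that contains $E^u_f$ for every nearby $f$. For $K_f$, I would use strong structural stability (Theorem \ref{thm: Strong Structural Stability}): if $g$ is $C^1$-close to $f_0$, then $f_0$ and $g$ are conjugated by a homeomorphism $\varphi_g$ homotopic to the identity with $\|\tilde\varphi_g-\id\|_\infty$ as small as we please. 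Setting $H_g:=H_{f_0}\circ\varphi_g^{-1}$ produces a conjugacy satisfying $H_g\circ g=F_L\circ H_g$ and $K_g\leq K_{f_0}+\|\tilde\varphi_g^{-1}-\id\|_\infty$, which is locally bounded. Since any two conjugacies between $g$ and $F_L$ homotopic to the identity differ by a translation by a fixed point of $F_L$, of which there are only finitely many, this choice does not affect the uniform bound up to an additive constant absorbed into $b$.

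To conclude, by Corollary \ref{cor: Compact Embedding} the set $\mathcal{U}$ is $C^1$-compact, so it is covered by finitely many such neighborhoods, and we take the maxima of the associated constants. The main obstacle is verifying the local uniformity of $K_f$: the conjugacy $H_f$ has no reason to depend continuously on $f$ in any $C^k$ norm, but the $C^0$-continuity of the conjugating homeomorphism provided by strong structural stability is exactly what is needed to control the displacement $\|\tilde H_f-\id\|_\infty$, which is the only feature of $H_f$ that enters the quasi-isometry estimate.
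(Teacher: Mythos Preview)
Your overall strategy---reduce to local uniformity in a $C^1$ neighborhood and then invoke the compactness argument---matches the paper's, and your treatment of the displacement $K_f$ via strong structural stability is fine. The gap is the assertion that $E^u_f$ lies in a narrow cone around $v_u$. This is \emph{not} a consequence of the rate estimates~(\ref{eqn:Anosov Condition}): those bounds control expansion and contraction along $E^u_f$ and $E^s_f$, but say nothing about the direction of $E^u_f$ relative to the linear eigendirection $v_u$. For $f\in\mathcal{U}$ homotopic to $F_L$ but $C^1$-far from it---say $f=\phi\circ F_L\circ\phi^{-1}$ with $D\phi$ rotating substantially at some point---one has $E^u_f=D\phi(v_u)$, which can be made parallel to $v_s$ there. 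At such a point $\tilde W^u_f$ has a vertical tangent in $(v_u,v_s)$-coordinates and is not a graph over the $v_u$-axis, so your arc-length estimate breaks down; the Hausdorff bound $2K_f$ by itself does not prevent the leaf from accumulating arbitrary arc length inside the strip.

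The paper sidesteps this by taking the quasi-isometry of $\mathcal{F}^{u,f}$ for a \emph{fixed} $f$ as known (citing \cite{Quasi-Isometry}) and then comparing each nearby $g$ not to $F_L$ but to $f$ itself: it realizes $\tilde W^u_g(a)$ as a Lipschitz graph over $\tilde W^u_f(h_g(a))$ via the stable holonomy of $f$, namely $x\mapsto \tilde W^s_f(x)\cap\tilde W^u_g(a)$. The Lipschitz constant of this map is controlled by the angle $\measuredangle(E^u_f,E^u_g)$, which \emph{is} small when $g$ is $C^1$-close to $f$---precisely the continuous dependence of the splitting that you invoke. Chaining this graph estimate with the constants $a_f,b_f$ for $f$ yields uniform constants on a $C^1$-neighborhood of $f$, and compactness finishes. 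Your argument is repaired by the same move: replace the reference pair $(F_L,v_u)$ by $(f_0,E^u_{f_0})$ throughout, and cite rather than reprove the single-map quasi-isometry.
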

Recall that $\Tilde{W}^u_f$ denotes the lift of the unstable leaf to the universal cover, $\Tilde{d}^u_f$ denotes the leafwise distance on the lifted leaf, and $\Tilde{d}$ denotes the Riemannian distance on the universal cover. 
\begin{proof}
    Fix $f\in\mathcal{U}$. Since the unstable foliation is quasi-isometric, there exists $a_f,b_f>0$ such that 
    $$
    d^u_f(x,y)\leq a_fd(x,y)+b_f,
    $$
    for $y\in W^u_f(x)$. The goal is to show that for $g$ sufficiently $C^1$ close to $f$ we can express $\Tilde{W}^u_g(a)$ as a Lipschitz graph over $\Tilde{W}^u_f(b)$ for some $b$. To make this precise we assume that $d_{C^1}(f,g)<\delta$ is sufficiently small so that there exists $\theta<\frac{\pi}{2}$ such that $\measuredangle(E^u_f,E^u_g)\leq \theta$. By strong structural stability, given any $\epsilon>0$, after possibly decreasing $\delta$, $f$ is conjugate to $g$ by a homeomorphism $h_g$ that satisfies $d_{C^0}(h_g,\id)+d_{C^0}(h_g^{-1},\id)<\epsilon.$
    Given a point $a\in\mathbb{T}^2$, let $b=h_g(a)$. Then $\Tilde{W}^u_g(a)$ lies in an $\epsilon$-neighborhood of $\Tilde{W}^u_f(b)$, and every point in $\Tilde{W}^u_g(a)$ can be connected to $\Tilde{W}^u_f(b)$ by a stable manifold of $f$ of length less than $C\epsilon$, where $C>0$ is some constant depending only on $f$. Define $\varphi_g:\Tilde{W}^u_f(b)\rightarrow \Tilde{W}^u_g(a)$ by 
    $\varphi_g(x)=\Tilde{W}^s_f(x)\cap\Tilde{W}^u_g(b)$. Then $\varphi_g$ is a Lipschitz function with Lipschitz constant depending only on $f$ and the maximum angle $\theta$ between $E^u_f$ and $E^u_g$, but not depending on the point $a$.
    Therefore, for any $x,y\in\Tilde{W}^u_g(a)$,
    $$
    \Tilde{d}^u_g(x,y)=\Tilde{d}^u_g(\varphi_g(\overline{x}),\varphi_g(\overline{y}))\leq |\varphi_g|_{Lip}\Tilde{d}^u_f(\overline{x},\overline{y})\leq
    a_f|\varphi_g|_{Lip}\Tilde{d}(\overline{x},\overline{y})+b_f|\varphi_g|_{Lip}\leq
    $$
    $$
    a_f|\varphi_g|_{Lip}\left(\Tilde{d}(x,y)+\Tilde{d}(x,\overline{x})+\Tilde{d}(y,\overline{y}) \right)+b_f|\varphi_g|_{Lip}\leq
    a_f|\varphi_g|_{Lip}\Tilde{d}(x,y)+b_f|\varphi_g|_{Lip}+2C\epsilon a_f|\varphi_g|_{Lip}.
    $$
This gives us uniformity in a $C^1$ neighborhood of $f$. By the Arzel\`a-Ascoli compactness argument, we get uniformity for every $f\in\mathcal{U}$.    
\end{proof}
\begin{rem}
  When $f$ is sufficiently close to linear we may take $b=0$ (See for instant the paper of Gogolev and Guysinsky \cite{Gogolev_Guysinsky}). See also the proof of Lemma \ref{lem: Uniform change in leaf metric}  
\end{rem}

\begin{lemma}
    \label{lem: Non local Lipschitz on universal cover}
     $\forall \delta>0, \exists C_\delta>0$ such that $\Tilde{d}(h(x),h(y))\leq C_\delta \Tilde{d}(x,y)$ whenever $\Tilde{d}(x,y)\geq\delta$.
\end{lemma}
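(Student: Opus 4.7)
The plan is to exploit the fact that $h$ is homotopic to the identity to show that any lift $\tilde{h}:\mathbb{R}^2\to\mathbb{R}^2$ sits at uniformly bounded $C^0$ distance from the identity. Because $h\sim\id$, the lift commutes with deck translations, $\tilde{h}(x+n)=\tilde{h}(x)+n$ for every $n\in\mathbb{Z}^2$, so the displacement $\tilde{h}-\id$ is $\mathbb{Z}^2$-periodic and in particular bounded; set $K(f,g):=\|\tilde{h}-\id\|_\infty$.

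Given such a displacement bound, the inequality is an immediate triangle-inequality computation: for any $x,y\in\mathbb{R}^2$,
$$
\tilde{d}(\tilde{h}(x),\tilde{h}(y))\leq \tilde{d}(x,y)+\|\tilde{h}(x)-x\|+\|y-\tilde{h}(y)\|\leq \tilde{d}(x,y)+2K(f,g),
$$
so whenever $\tilde{d}(x,y)\geq\delta$ we conclude $\tilde{d}(\tilde{h}(x),\tilde{h}(y))\leq (1+2K(f,g)/\delta)\tilde{d}(x,y)$, giving $C_\delta=1+2K(f,g)/\delta$.

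The real content, in keeping with the theme of this section, is that $K(f,g)$ must be bounded uniformly over pairs $(f,g)\in\mathcal{U}\times\mathcal{U}$, and this is exactly the Arzel\`a--Ascoli argument from Section \ref{sec: The Arzela-Ascoli Argument}. Using Franks--Manning, I would write $h=H_g^{-1}\circ H_f$ where $H_f$ is a conjugacy between $f$ and the common linear part $F_L$ in the homotopy class of $\id$, and reduce to uniformly bounding $\|\tilde{H}_f-\id\|_\infty$. Since $\mathcal{U}$ is $C^1$-compact by Corollary \ref{cor: Compact Embedding}, strong structural stability (Theorem \ref{thm: Strong Structural Stability}) lets us cover $\mathcal{U}$ by finitely many $C^1$-neighborhoods on which the canonical near-identity conjugacy to the center of the neighborhood has $C^0$ displacement at most $1$. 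Chaining these near-identity conjugacies along a finite sequence of neighborhoods joining $f$ to a fixed reference point, and then from that reference to $F_L$, gives a uniform bound on $\|\tilde{H}_f-\id\|_\infty$ by a constant depending only on the cover; applying the same argument to $g$ and combining yields a uniform bound on $K(f,g)$, and hence on $C_\delta$.

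The main obstacle, modest as it is, is the bookkeeping in the last step: the Franks--Manning conjugacy $H_f$ is only unique up to the finite ambiguity coming from $\fix(F_L)$, so one must verify that choices can be made consistently along the chain of neighborhoods. Since this ambiguity only translates $H_f$ by a rational point of $\mathbb{T}^2$, only finitely many candidates appear at each step and the uniform displacement bound survives.
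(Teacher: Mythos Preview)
Your argument is correct and essentially matches the paper's: both observe that a lift of $h$ has bounded displacement from the identity (you phrase this as $\|\tilde h-\id\|_\infty<\infty$, the paper uses the equivalent quantity $\diam(\tilde h([0,1]^2))$), convert this into the non-local Lipschitz bound by a triangle-inequality computation, and obtain uniformity from strong structural stability plus $C^1$-compactness of $\mathcal U$. The factorization $h=H_g^{-1}\circ H_f$ through the linear part is a mild simplification over the paper, reducing uniformity over pairs $(f,g)$ to uniformity over single maps.

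One small correction: the chaining step at the end is unnecessary and tacitly assumes a connectivity of $\mathcal U$ (or a path to $F_L$ through Anosov diffeomorphisms) that is not given. Once you cover $\mathcal U$ by finitely many structural-stability neighborhoods with centers $f_1,\dots,f_m$, each $\|\tilde H_{f_i}-\id\|_\infty$ is already a finite number (the displacement of any lift of a map homotopic to $\id$ is $\mathbb Z^2$-periodic, hence bounded), so for $f$ in the $i$th neighborhood you get $\|\tilde H_f-\id\|_\infty\le\|\tilde H_{f_i}-\id\|_\infty+1$ directly, and taking the maximum over $i$ finishes the uniformity without any chaining.
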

\begin{proof}
    Fix $f,g\in\mathcal{U}$ and let $h$ be the unique homeomorphism in the homotopy class of the identity conjugating $f$ and $g$ ($hf=gh$). By structural stability, if we take $\overline{f}$ and $\overline{g}$ sufficiently $C^1$ close to $f$ and $g$, respectively, then $f$ is conjugate to $\overline{f}$ by a conjugacy $h_f$ that is $C^0$ close to the identity, and in the same homotopy class as the identity, and similarly for $g$ and $\overline{g}$. Therefore $\overline{f}$ and $\overline{g}$ are conjugate by a homeomorphism $\overline{h}$ in the same homotopy class as the identity and is $C^0$ close to $h$. 

    Since $\overline{h}$ is homotopic to the identity, its lift to the universal cover satisfies $\overline{h}(x+\overline{n})=\overline{h}(x)+\overline{n}$ for every $\overline{n}\in\mathbb{Z}^2$. Now let $\Tilde{d}(x,y)\geq\delta$, and $x=\overline{x}+\overline{n}$, $y=\overline{y}+\overline{m}$, where $\overline{x},\overline{y}\in [0,1]^2$ and $\overline{n},\overline{m}\in\mathbb{Z}^2$. Then we have
    $$
    |\overline{h}(x)-\overline{h}(y)|=|\overline{h}(\overline{x}+\overline{n})-\overline{h}(\overline{y}+\overline{m})|\leq |\overline{h}(\overline{x})-\overline{h}(\overline{y})|+|\overline{n}-\overline{m}|\leq 2\diam(\overline{h}([0,1]^2))+|\overline{n}-\overline{m}|.
    $$
    Since $d_{C^0}(h,\overline{h})<\varepsilon$, we have 
    $\diam(\overline{h}([0,1]^2))<\diam(h([0,1]^2))+2\varepsilon$. Then, since $\Tilde{d}(x,y)\geq\delta$, we have
    $$
    |\overline{h}(x)-\overline{h}(y)|\leq C_\delta \Tilde{d}(x,y),
    $$
    where $C_\delta>0$ is uniform in neighborhoods of $f$ and $g$. Finally, by the Arzel\`a-Ascoli argument, $C_\delta>0$ can be chosen uniform for all $f,g\in\mathcal{U}$, depending only on $\delta>0$ and $\mathcal{U}$.
\end{proof}
The proof of Lemma \ref{lem: Non local Lipschitz along unstables} now follows easily:
\begin{proof}[Proof of Lemma \ref{lem: Non local Lipschitz along unstables}]

    When $\delta>b$, $d^u_f(x,y)\geq\delta$ implies that $\Tilde{d}(x,y)\geq\frac{\delta-b}{a}=\delta'>0$ by Lemma \ref{lem:Universal Quasi-Isometric}. Therefore we may assume that $\Tilde{d}(x,y)\geq\delta'$.
    By Lemmas \ref{lem:Universal Quasi-Isometric} and \ref{lem: Non local Lipschitz on universal cover} we have
    $$
    d^u_g(h(x),h(y))\leq a\Tilde{d}(h(x),h(y))+b\leq
    aC_{\delta'}\Tilde{d}(x,y)+b.
    $$
    Since $\Tilde{d}(x,y)\geq\delta'$, we can find $C'>0$ depending only on $a,b,$ and $C_{\delta'}$ such that 
    \begin{equation*}
        aC_{\delta'}\Tilde{d}(x,y)+b\leq C'\Tilde{d}(x,y)\leq C'd^u_f(x,y).
    \end{equation*}
\end{proof}

\subsection{The Construction}
\label{sec: The Construction}
The main difficulty in the construction is ensuring that it is done uniformly across $f$ and $g$ in our set $\mathcal{U}$; that is, keeping track of how our choices in the construction enter into our constants in Theorem \ref{Thm:Main Theorem}. Throughout the construction we shall state lemmas we will need for the estimates, but we will defer the proof of the most technical of these lemmas to the next subsection.

We begin by recalling the main idea of \cite{delaLlave} in more detail. If the unstable periodic data between $C^r$ Anosov diffeomorphisms $f$ and $g$ are matched by a conjugacy $h$, then since the SRB measure $\mu_f$ is the equilibrium state of the potential $\psi_f=-\log(D_uf)$, we have the $h$ pushes the SRB measure of $f$ to the SRB measure of $g$: $h_*\mu_f=\mu_g$. It then follows that $h$ pushes forward the conditional measures of $\mu_f$ along unstable leaves to the corresponding conditional measures of $\mu_g$. To make this precise, if we fix $x_0\in\mathbb{T}^2$, and define
$$I^u_{f,x_0}(x)=\int_{x_0}^x d\mu^u_{f,x_0},$$ 
where the integral is taken over the unstable manifold of $x_0$ from $x_0$ to $x$, and similarly define $I^u_{g,h(x_0)}(x)$, then the agreement of the the unstable periodic data implies that $I^u_{f,x_0}(x)=I^u_{g,h(x_0)}(h(x))$. We can then express $h$ as the composition of $C^r$ functions $h=(I^u_{g,h(x_0)})^{-1}\circ I^u_{f,x_0}$, which shows that $h$ is $C^r$ when restricted to each (dense) unstable leaf. From here de la Llave is able to conclude that $h\in C^r_u$, and a symmetric argument using the stable periodic data and the SRB measures for $f^{-1}$ and $g^{-1}$ shows that $h\in C^r_s$. The proof is then finished by an application of Journ\'e's lemma (Theorem \ref{thm: Journe Lemma}).

Letting $x_0\in\mathbb{T}^2$ be a fixed point of $f$, this motivates us to try and define $h_N$ on $W^u(x_0)$ by the formula 
$(I^u_{g,h(x_0)})^{-1}\circ I^u_{f,x_0}(x)$.
By minimality of the unstable foliation, this defines $h_N$ on a dense subset of $\mathbb{T}^2$, but unfortunately, this function cannot in general be extended to even a continuous function on the whole manifold. This is because two points $a,b\in W^u(x_0)$ may be close in the metric on $\mathbb{T}^2$, but far apart in the leaf, and so there is no reason that $h_N(a)$ should be close to $h_N(b)$. To circumvent this issue, we will instead start by defining $h_N$ on a compact segment of unstable manifold. Let $x_1\in W^u(x_0)\cap W^s(x_0)$, and denote the unstable segment between $x_0$ and $x_1$ by $A^u_f$ and the stable segment between $x_0$ and $x_1$ by $A^s_f$. We will also write $A^u_g:=h(A^u_f)$ and $A^s_g:=h(A^s_f)$. For the sake of our construction, the points $x_0$ and $x_1$ may be arbitrary heteroclinic points. For $\Tilde{f}\in\mathcal{U}$ that is $C^1$-close to $f$, we will choose the corresponding heteroclinic points $\Tilde{x}_0$ and $\Tilde{x}_1$ according to the structural stability conjugacy; see Section \ref{sec: Uniformity of the New Conjugacy} for details. 

\begin{figure}
    \centering
    \includegraphics[scale=0.25]{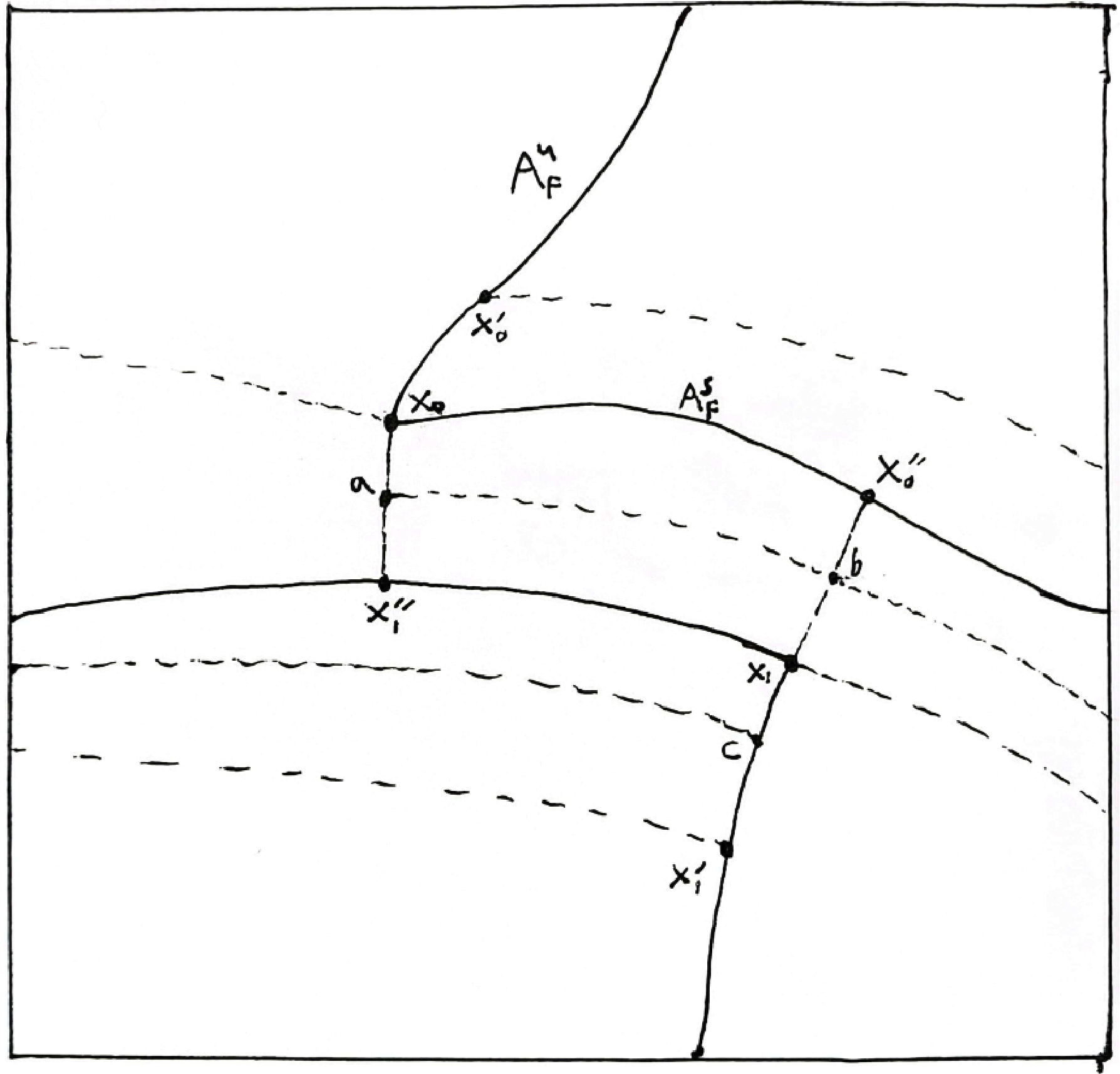}
    \caption{Configuration of $A^u_f$ and $A^s_f$}
    \label{fig:Figure1}
\end{figure}
Extending the stable manifold of the segment $A^s_f$ until the first intersections with $A^u_f$, we obtain two points $x_0',x_1'\in A^u_f$. The typical configuration of such points is shown in Figure \ref{fig:Figure1}. Note that we can extend $A^u_f$ to obtain two points $x_0'',x_1''\in A^s_f$. This lets us divide $A^u_f$ into three subintervals, $A^u_{f,1}:=[x_0,x_0']_u$, $A^u_{f,2}:=[x_0',x_1']_u$, and $A^u_{f,3}:=[x_1',x_1]_u$. Likewise, set $A^u_{g,i}:=h(A^u_{f,i})$, for $i=1,2,3$.

We normalize $\mu^u_{f,x_0}$ and $\mu^u_{g,h(x_0)}$ so that 
$\mu^u_{f,x_0}(A^u_{f,1})=\mu^u_{g,h(x_0)}(A^u_{g,1})=1$. We then define 
$h_N|_{A^u_{f,1}}:A^u_{f,1}\rightarrow A^u_{g,1}$ by
\begin{equation}
\label{def:h_N|A^u_{f,1}}
h_N|_{A^u_{f,1}}(x)= (I^u_{g,h(x_0)})^{-1}\circ I^u_{f,x_0}(x).
\end{equation}
Similarly, we normalize the measures $\mu^u_{f,x_0}$ and $\mu^u_{g,h(x_0)}$ so that 
$\mu^u_{f,x_0'}(A^u_{f,2})=\mu^u_{g,h(x_0')}(A^u_{g,2})=1$ and define $h_N|_{A^u_{f,2}}(x)$ by the same expression, and likewise for defining $h_N|_{A^u_{f,3}}(x)$. Observe that 
$h_N|_{A^u_{f,1}}(x_0)=h(x_0)$, 
$h_N|_{A^u_{f,1}}(x_0')=h_N|_{A^u_{f,2}}(x_0')=h(x_0')$, 
$h_N|_{A^u_{f,2}}(x_1')=h_N|_{A^u_{f,3}}(x_1')=h(x_1')$, and finally
$h_N|_{A^u_{f,3}}(x_1)=h(x_1)$. Therefore $h_N|_{A^u_f}$ is well-defined by these expressions. Moreover, by construction $h_N(x_0)$, $h_N(x_0')$, $h_N(x_1')$, and $h_N(x_1)$ all lie on the same stable manifold of $g$. In general, however, $h_N$ will not map stable manifolds of $f$ to stable manifolds of $g$.

Defined this way, $h_N|_{A^u_f}$ is piecewise $C^2$, with possible discontinuities in the derivative at the points $x_0'$ and $x_1'$. To obtain differentiability at these points, we will reparameterize the intervals $A^u_{f,i}$, $i=1,2,3$. To make this precise, let $\sigma_i:A^u_{f,i}\rightarrow A^u_{f,i}$ be an orientation-preserving $C^2$ diffeomorphism and consider 
$h_{N,\sigma_i}:=h_N|_{A^u_{f,i}}\circ\sigma_i :A^u_{f,i}\rightarrow A^u_{g,i}$, $i=1,2,3$. Then
$$
(h_{N,\sigma_i})'(x)=(h_N|_{A^u_{f,i}})'(\sigma_i(x))\sigma_i'(x).
$$
We may therefore choose the reparameterizations such that 
$(h_N|_{A^u_{f,1}})'(\sigma_1(x_0'))\sigma_1'(x_0')=
(h_N|_{A^u_{f,2}})'(\sigma_2(x_0'))\sigma_2'(x_0')$ and
$(h_N|_{A^u_{f,2}})'(\sigma_2(x_1'))\sigma_2'(x_1')=
(h_N|_{A^u_{f,3}})'(\sigma_3(x_1'))\sigma_3'(x_1')$. The following lemma says that we can do so without sacrificing our estimates in Section \ref{sec: Proof the Main Theorem the C^0 Estimates}.
\begin{lemma}
\label{lem:reparam}
    We can choose the $\sigma_i$ to be $C^0$-arbitrarily close to the identity with arbitrary values of the the first and second derivatives of $\sigma_i$ at the endpoints. 
\end{lemma}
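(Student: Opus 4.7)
The plan is to reduce the problem to a single interval and build each $\sigma_i$ by integrating a carefully designed positive density. It suffices to prove: given an interval $[a,b]$, prescribed values $\alpha_0,\alpha_1>0$ and $\beta_0,\beta_1\in\bb{R}$, and $\varepsilon>0$, there is an orientation-preserving $C^2$ diffeomorphism $\sigma:[a,b]\to[a,b]$ with $\sigma(a)=a$, $\sigma(b)=b$, $\sigma'(a)=\alpha_0$, $\sigma''(a)=\beta_0$, $\sigma'(b)=\alpha_1$, $\sigma''(b)=\beta_1$, and $\|\sigma-\id\|_{C^0}<\varepsilon$. Rather than writing $\sigma$ directly, I would construct its derivative $\rho=\sigma'$ as a strictly positive $C^1$ function and then recover $\sigma$ by setting $\sigma(x):=a+\int_a^x \rho(t)\,dt$. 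In this formulation the boundary conditions become $\rho(a)=\alpha_0$, $\rho(b)=\alpha_1$, $\rho'(a)=\beta_0$, $\rho'(b)=\beta_1$, together with the integral constraint $\int_a^b \rho = b-a$ (which is what makes $\sigma$ fix both endpoints).

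The ansatz I would take is $\rho(x)=1+\phi_a(x)+\phi_b(x)+c\,\chi_0(x)$, where $\phi_a$ is a $C^1$ bump supported in $[a,a+\delta]$ realizing the 1-jet $(\alpha_0-1,\beta_0)$ at $a$ and vanishing to first order at $a+\delta$, $\phi_b$ is its analogue near $b$, $\chi_0$ is a fixed smooth bump supported well inside $(a+\delta,b-\delta)$, and $c$ is a scalar chosen to enforce $\int_a^b\rho = b-a$. An explicit choice is $\phi_a(x) := \bigl[(\alpha_0-1)+\beta_0(x-a)\bigr]\,\chi\bigl((x-a)/\delta\bigr)$ where $\chi$ is a smooth plateau function equal to $1$ on $[0,1/2]$ and $0$ on $[1,\infty)$ with $\chi'(0)=\chi''(0)=0$; a direct computation confirms $\phi_a(a)=\alpha_0-1$, $\phi_a'(a)=\beta_0$, $\phi_a(a+\delta)=\phi_a'(a+\delta)=0$. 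The scalar $c$ satisfies $c = -(\int\phi_a+\int\phi_b)/\int\chi_0 = O(\delta)$. Since $|\sigma(x)-x|\leq \int_a^b |\rho-1|\,dt \leq C(\alpha_0,\alpha_1,\beta_0,\beta_1)\,\delta$, the $C^0$-smallness follows immediately by choosing $\delta$ small.

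The one point requiring care, and what I expect to be the main obstacle, is guaranteeing $\rho>0$ so that $\sigma$ is actually an orientation-preserving diffeomorphism. On the support of $\phi_a$ one has $|\phi_a(x)|\leq |\alpha_0-1|+|\beta_0|\delta$. If $\alpha_0\geq 1$, then $\phi_a\geq 0$ for $\delta$ small, so $\rho\geq 1-O(\delta)>0$. If $0<\alpha_0<1$, then $\phi_a(x)\geq (\alpha_0-1)-|\beta_0|\delta$, which exceeds $-1$ as soon as $\delta<\alpha_0/|\beta_0|$ (if $\beta_0\neq 0$; otherwise trivially). The same applies near $b$, and $|c\chi_0|=O(\delta)$ is negligible. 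Hence for $\delta$ chosen small enough depending on the four prescribed jet values, $\rho>0$ on all of $[a,b]$, and the resulting $\sigma$ has all the desired properties. Since the lemma makes no uniformity claim in the prescribed derivatives, this dependence of $\delta$ on the data is acceptable; the construction applies separately to each $\sigma_i$ with its own choice of $\delta$.
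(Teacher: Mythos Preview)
The paper states this lemma without proof, treating it as an elementary construction and moving directly to its application. Your argument is correct and supplies exactly the kind of proof one would expect: writing $\sigma(x)=a+\int_a^x\rho$ with $\rho=1+\phi_a+\phi_b+c\chi_0$, where the bumps $\phi_a,\phi_b$ impose the prescribed $1$-jets on intervals of width $\delta$ and the scalar $c$ enforces the integral constraint $\int_a^b\rho=b-a$, is the natural approach, and the $C^0$-smallness $\|\sigma-\id\|_{C^0}\leq\int_a^b|\rho-1|=O(\delta)$ follows immediately. The positivity check is the only point requiring care, and your case analysis handles it; the claim ``$\phi_a\geq 0$ when $\alpha_0\geq 1$'' should strictly read $\alpha_0>1$, but in the borderline case $\alpha_0=1$ one still has $\phi_a\geq -|\beta_0|\delta$, which suffices. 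Your closing remark that the lemma carries no uniformity claim in the prescribed jets is accurate for the lemma as stated; the paper only extracts quantitative control later, via Sublemma~\ref{lem: C1 size of reparam}, by showing that in the application the prescribed first derivatives are in fact $1+O(\gamma^N)$.
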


By computing the second derivatives of the $h_{N,\sigma_i}$, we can see, using Lemma \ref{lem:reparam}, that the $\sigma_i$ can be chosen so that the first and second derivatives of the $h_{N,\sigma_i}$ at overlapping endpoints agree. Therefore we can combine these definitions and obtain a single $C^2$ function $h_{N,\sigma}|_{A^u_f}$. Notice that we are also able to use Lemma \ref{lem:reparam} to choose the value of the derivative of $h_{N,\sigma}|_{A^u_f}$ at the points $x_0,x_0',x_1',$ and $x_1$. 
%This will be important for establishing differentiability of the extension of $h_{N,\sigma}|_{A^u_f}$ to all of $\mathbb{T}^2$ (see [INSERT REFERENCE TO WHERE WE DO THIS]).
Suppose for the the moment that $h\in C^1$ and consider a pair of points $a\in\mathbb{T}^2$ and $b\in W^u_f(a)\cap W^s_f(a)$. Then we have the relation
$$
h(x)=\hol^{s,g}_{h(a),h(b)}\circ h\circ\hol^{s,f}_{b,a}(x),
$$
for every $x\in W^u_{f,loc}(b)$. Differentiating this along the unstable direction at $b$ then yields the following relation:
$$
    D_uh(b)=D_u\hol^{s,g}_{h(a),h(b)}(h(a)D_uh(a)D_u\hol^{s,f}_{b,a}(b).
$$
Based on this, we choose the $\sigma_i$ so that 

\begin{equation}
    \label{eqn:Derivative Condition}
    D_uh_N(x_i)=D_u\hol^{s,g}_{h(x_0),h(x_i)}(h(x_0))D_uh_N(x_0)D_u\hol^{s,f}_{x_i,x_0}(x_i),
\end{equation}
where $x_i$ ranges over $x_0',$ $x_1'$, and $x_1$.
\begin{lemma}
\label{lem: Uniformity along A^u_f}
    The function $h_{N,\sigma}|_{A^u_f}$ is $C^2$ along $A^u_f$ and for every $0<\alpha<1$, the H\"older norm $||h_{N,\sigma}|_{A^u_f}||_{C^{1+\alpha}}$ is uniformly bounded for all $f,g\in\mathcal{U}$.
\end{lemma}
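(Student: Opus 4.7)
The plan is to establish the $C^2$ regularity of $h_{N,\sigma}|_{A^u_f}$ on each of the three subintervals $A^u_{f,i}$ separately and then use the matching conditions on the reparameterizations $\sigma_i$ to glue them together. On a single subinterval, $h_N|_{A^u_{f,i}}$ is a composition $(I^u_{g,h(x_i)})^{-1}\circ I^u_{f,x_i}$ of a cumulative distribution function of the conditional measure with the inverse of another. Since the density $\omega^u_f(\cdot,x_i)$ is given by the product formula (\ref{eqn: Conditional Density Normalized to 1 at a point Formula}), and $f$ is $C^2$, a termwise differentiation of this product in the unstable direction converges uniformly: the $n$-th term's derivative involves $D_uf^{-n}$ evaluated on the unstable segment, which contracts like $\mu_1^{-n}$ by (\ref{eqn:Anosov Condition}), while $D^2_u f$ is bounded. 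This shows $\omega^u_f(\cdot,x_i)\in C^{1}$, and a very similar telescoping argument for differences of the derivatives shows that the derivative is itself $\alpha$-H\"older for any $\alpha<1$ (with any $\alpha$ sufficient for our purposes; in fact one gets Lipschitz bounds). Hence $I^u_{f,x_i}$ is $C^{2+\alpha}$ along $W^u_f(x_i)$, and similarly for $I^u_{g,h(x_i)}$. Since the densities are bounded below away from zero (again uniformly, as the product in (\ref{eqn: Conditional Density Normalized to 1 at a point Formula}) is uniformly bounded above and below over $\mathcal{U}$), the inverse $(I^u_{g,h(x_i)})^{-1}$ is also $C^{2+\alpha}$ with uniform norm, and so $h_N|_{A^u_{f,i}}$ is $C^{2+\alpha}$ on each subinterval.

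Uniformity of these bounds across $f,g\in\mathcal{U}$ follows from the Arzel\`a-Ascoli reduction described in Section \ref{sec: The Arzela-Ascoli Argument}: it suffices to work in a $C^1$-neighborhood of a fixed $f_0$, in which the expansion/contraction constants (\ref{eqn:Anosov Condition}) are uniform, the bounds on $\|D^2_u f\|_{C^0}$ are uniform (by the $C^2$-boundedness of $\mathcal{U}$), and the selected heteroclinic points $x_0(f)$, $x_1(f)$ depend continuously on $f$ via structural stability. Hence the termwise estimates on the product (\ref{eqn: Conditional Density Normalized to 1 at a point Formula}) and its derivatives can be carried out with constants depending only on $\mathcal{U}$.

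To pass from piecewise $C^2$ to globally $C^2$ on $A^u_f$, I would apply Lemma \ref{lem:reparam}. The one-sided limits of $(h_N|_{A^u_{f,i}})'$ and $(h_N|_{A^u_{f,i}})''$ at each interior endpoint $x_0'$ and $x_1'$ are determined explicitly by the formulas above and by (\ref{eqn:Derivative Condition}), together with uniform bounds coming from the uniform $C^{1+\varepsilon}$ regularity of the stable holonomies (cf.\ the discussion in Section \ref{sec: Regularity of the Stable and Unstable Foliations}, where the area-preserving assumption yields $C^{1+\mathrm{Zyg}}$ holonomies, with $C^1$ norm uniform on $\mathcal{U}$). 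Lemma \ref{lem:reparam} lets us prescribe the values of $\sigma_i'$ and $\sigma_i''$ at the endpoints to force the left and right first and second derivatives of the glued function $h_{N,\sigma}|_{A^u_f}$ to coincide at $x_0'$ and $x_1'$. The prescribed values of $\sigma_i'(x_0'),\sigma_i''(x_0'),\ldots$ are themselves uniformly bounded (and $\sigma_i'$ is bounded away from $0$) because they are determined by uniformly bounded quantities.

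The main obstacle, and the place where care is needed, is verifying that the reparameterizations $\sigma_i$ can be produced with $C^2$ norm uniformly bounded in $\mathcal{U}$ while simultaneously matching the first and second derivatives dictated by (\ref{eqn:Derivative Condition}); this is exactly the content of Lemma \ref{lem:reparam}, which we invoke as a black box. Once this is done, the composed function $h_{N,\sigma}|_{A^u_f}$ is $C^2$ with a uniform $C^2$ bound, and in particular is uniformly $C^{1+\alpha}$ for every $0<\alpha<1$, giving the conclusion.
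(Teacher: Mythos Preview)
Your overall strategy matches the paper's: bound the conditional densities $\omega^u_f,\omega^u_g$ and their unstable derivatives via the product formula, deduce uniform $C^2$ bounds on each $h_N|_{A^u_{f,i}}$, and then glue across $x_0',x_1'$ with reparameterizations. The gap is in the last step.

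You write that producing $\sigma_i$ with uniformly bounded $C^2$ norm ``is exactly the content of Lemma \ref{lem:reparam}.'' It is not. Lemma \ref{lem:reparam} says only that $\sigma_i$ can be taken $C^0$-close to the identity while hitting arbitrary prescribed endpoint values of $\sigma_i'$ and $\sigma_i''$; it gives no $C^2$ control whatsoever. In fact these two requirements are in tension: if $|\sigma_i-\id|_{C^0}$ is forced small while $\sigma_i'$ at an endpoint is a fixed number $\neq 1$, then $\sigma_i''$ must be large somewhere. So invoking Lemma \ref{lem:reparam} as a black box does not give what you need.

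The paper closes this gap with a substantive extra step you omit (Sublemma \ref{lem: C1 size of reparam}): using the finite periodic data matching assumption and a shadowing/pseudo-orbit argument applied to the heteroclinic point $x_0'$, it shows that the derivative mismatch that $\sigma_i$ must correct is exponentially small, $|\sigma_i'-1|\le C\gamma^N$ at the endpoints. Only then can one take $|\sigma_i-\id|_{C^0}\le\gamma^{2N}$ and still keep $|\sigma_i|_{C^2}$ uniformly bounded. This exponential smallness is not a formality: it genuinely uses the hypothesis of Theorem \ref{Thm:Main Theorem} on period-$N$ data, and it is also exactly what makes $d_{C^0}(h_N,h_{N,\sigma})$ exponentially small, which is needed in Section \ref{sec: Proof the Main Theorem the C^0 Estimates}. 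Your argument, even if patched by a direct interpolation construction of $\sigma_i$ with merely bounded endpoint data, would yield a $h_{N,\sigma}$ with uniform $C^{1+\alpha}$ norm but not $C^0$-close to $h_N$, and so would not feed into the rest of the proof of the main theorem.
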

\noindent We defer the proof to Section \ref{sec: Uniformity of the New Conjugacy}.

For the sake of simplicity, we will drop the dependence on the $\sigma_i$ in our notation for the rest of this section and simply write $h_N$ in place of $h_{N,\sigma}$.
Our goal now will be to extend the definition of $h_N$ to a $C^{1+\alpha}_u$ function on all of $\mathbb{T}^2$. We will accomplish this using a weighted holonomy construction.

Let $x'\in\mathbb{T}^2/A^u_f$. Consider the points $y'$ and $z'$ which are obtained as the first intersections of $W^s_f(x')$ with $A^u_f$ in either direction (in Figure \ref{fig:Figure1}, $y'$ is obtained by moving $x'$ to the left, and $z'$ is obtained by moving $x'$ to the right). We define on $W^u_{f,loc}(x')$ the stable holonomy maps $y:W^u_{f,loc}(x')\rightarrow A^u_f$, $z:W^u_{f,loc}(x')\rightarrow A^u_f$ given by
$y(x)=\hol^{s,f}_{x',y'}(x)$ and $z(x)=\hol^{s,f}_{x',z'}(x)$. We can then apply $h_N$ to $y(x)$ and $z(x)$ and then take the respective stable holonomies from $A^u_g$ to $W^u_g(h(x))$. In general, the resulting two points will not be the same and we would like our point $h_N(x)$ to lie between these two points. To make this precise, we will define a $C^{1+Zyg}$ weight function 
$\rho:\mathbb{T}^2/A^u_g\rightarrow [0,1]$, and then define
\begin{equation}
\label{def:h_N}
 h_N(x)=\rho(h(x))\hol^{s,g}_{h(y(x)),h(x)}((h_N)|_{A^u_f}(y(x)))+(1-\rho(h(x)))\hol^{s,g}_{h(z(x)),h(x)}((h_N)|_{A^u_f}(z(x))),   
\end{equation}
where this convex combination is taken within the leaf relative to the natural Riemannian structure. We claim that for an appropriately chosen $\rho$, the resulting function $h_N$ is a continuous bijection, and hence a homeomorphism.

As a first observation, recall that $h_N(x_0)=h(x_0)$ and $h_N(x_1)=h(x_1)$. In particular, $h_N(x_0)$ and $h_N(x_1)$ lie on the same stable manifold. Now let $x\in A^s_f$. Then $y(x)=x_1,$ and so 
$$
\hol^{s,g}_{h(y(x)),h(x)}((h_N)|_{A^u_f}(y(x)))=\hol^{s,g}_{h(x_1),h(x)}((h_N)|_{A^u_f}(x_1)=\hol^{s,g}_{h(x_1),h(x)}(h(x_1))=h(x),
$$
and similarly $\hol^{s,g}_{h(z(x)),h(x)}((h_N)|_{A^u_f}(z(x)))=h(x)$. So regardless of the value of $\rho(h(x))$, we have
$$
h_N|_{A^s_f}=h|_{A^s_f}.
$$
Therefore we may actually allow discontinuities of $\rho\circ h$ as we cross $A^s_f$ along unstable leaves, so long as the one sided derivatives along unstable leaves match at $A^s_f$. We may also allow discontinuities of $\rho$ on the stable strips $[x_1,x_1']_s$ and $[x_0,x_0']_s$, but we will not need this for the construction.

In order for our mapping $h_N$ to be continuous, we require that as $x$ approaches $y(x)$ along its stable leaf, $\rho(h(x))\rightarrow 1$, and as 
$x$ approaches $z(x)$ along its stable leaf, $\rho(h(x))\rightarrow 0$. In fact, we will impose the stronger condition that $\rho(h(x))=1$ in a small neighborhood of $A^s_f$ on the ``right" side, and that $\rho(h(x))=0$ in a small neighborhood on the ``left" side. This is enough to ensure continuity except on the intervals $A^s_f$, $[x_1,x_1']_s$, and $[x_0,x_0']_s$. To see continuity at a point $x\in A^s_f$, let $x_n$ be a sequence of points approaching $x$ along $W^u(x)$ from ``below" (relative to Figure \ref{fig:Figure1}). In this case, $y(x_n)\rightarrow x_0'$ and $z(x_n)\rightarrow x_1$. By construction, $h_N(x_0')=h(x_0')$ and $h_N(x_1)=h(x_1)$ lie on the same stable manifold, so the stable holonomy to $W^u_g(h(x))$ takes both $h(x_0')$ and $h(x_1)$ to the same point, namely to $h(x)$:
$$
\lim_{n\rightarrow\infty}h_N(x_n)=\rho(h(x))h(x)+(1-\rho(h(x)))h(x)=h(x),
$$
exactly as required. If $x_n\rightarrow x$ from ``above," then 
$y(x_n)\rightarrow x_0$ and $z(x_n)\rightarrow x_1'$, and the remainder of the argument is the same. Likewise we can establish continuity for $x\in[x_1,x_1']_s$ and $x\in[x_0,x_0']_s$.

With this restriction on $\rho$, we have that $h_N$ as defined by \ref{def:h_N} is continuous on all of $\mathbb{T}^2$. We next need to impose conditions on $\rho$ to ensure that $h_N$ is a bijection. By construction, $h_N$ maps unstable leaves of $f$ to unstable leaves of $g$, so points on different stable manifolds are mapped to distinct points. Thus we only need to show that $h_N$ can be made to be a invertible when to restricted to each unstable manifold of $f$.

We would like to require $\rho(x)$ to be constant along unstable leaves of $g$, though this is not possible due to minimality of the unstable foliation. Instead, we will keep $\rho(x)$ constant on unstable manifolds of $g$, except for jump discontinuities when crossing $A^s_g$. Since the conjugacy $h$ preserves the unstable foliation, $\rho(h(x))$ will be constant on unstable manifolds of $f$, except for jump discontinuities when crossing $A^s_f$.

Referring to Figure \ref{fig:Figure1}, we consider the segment of stable manifold $B^s_f$ going from point $a$ to point $c$. As $x\rightarrow x_0$ from the left, we require that that $\rho(h(x))\rightarrow0$ and we similarly require that $\rho(h(x))\rightarrow1$ as $x\rightarrow x_1$ from the right. Therefore we should have $\rho(h(a))=0$ and $\rho(h(b))=1$. Define $\rho(x)$ on $B^s_f$ to be any $C^\infty$ function with $\rho(h(a))=0$, $\rho(h(b))=1$, and $\lim_{x\rightarrow c}\rho(h(x))=0$. In fact, we will impose the stronger requirement that $\rho\circ h$ is constant in sufficiently small neighborhoods of the points $a, b,$ and $c$ in the stable manifolds. We then extend $\rho(x)$ to all of $\mathbb{T}^2/(A^u_g\cup A^s_g)$ by defining it to be constant on unstable leaves up to the first intersection with $A^s_g$.
\begin{figure}
    \centering
    \includegraphics[width=0.5\linewidth]{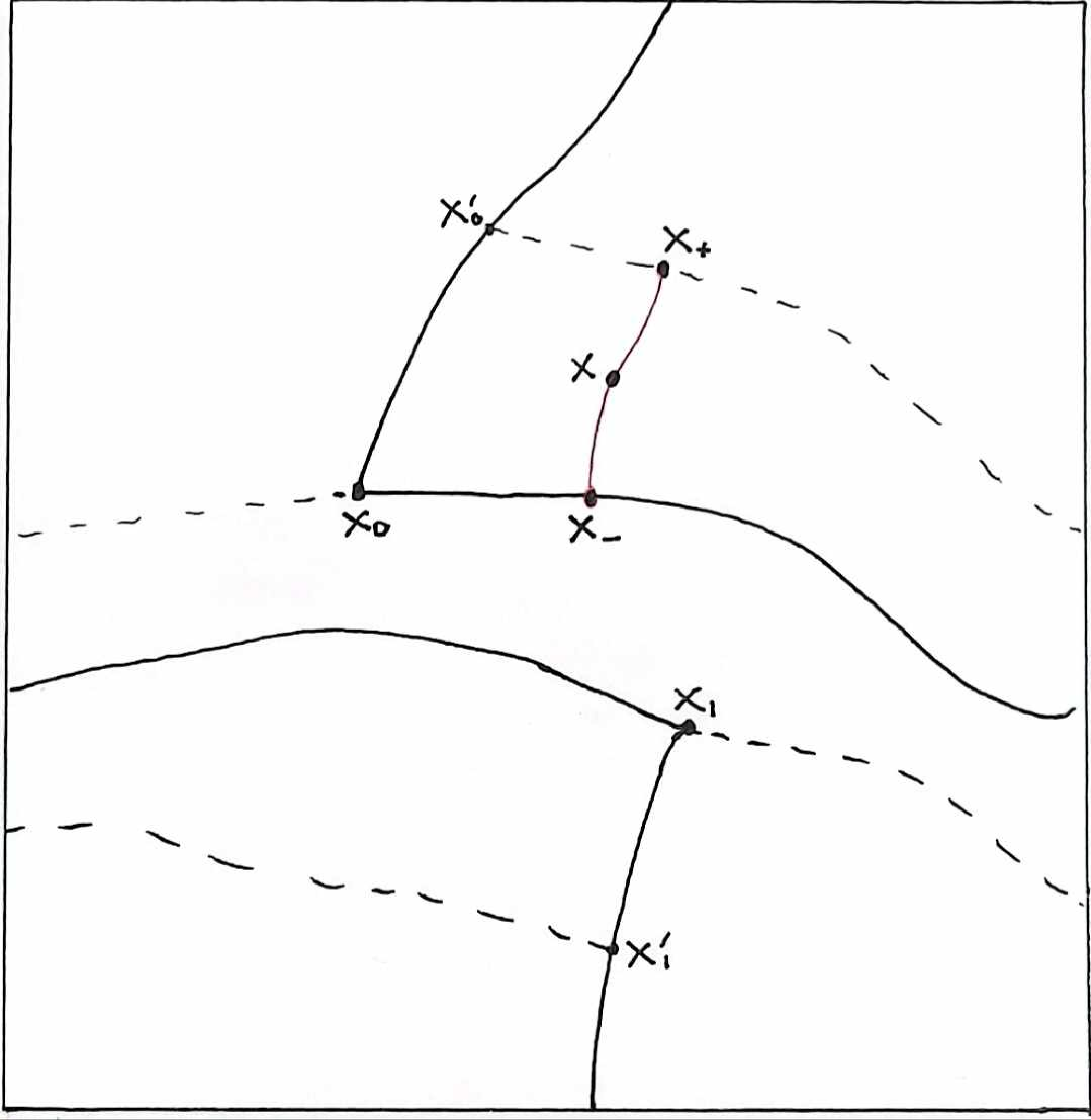}
    \caption{The segment $[x_-,x_+]_u$ shown in red.}
    \label{fig:Figure 2}
\end{figure}
\begin{lemma}
    The function $h_N$ as defined in \ref{def:h_N} with this choice of $\rho(x)$ is a bijection.
\end{lemma}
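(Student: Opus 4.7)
The plan is to reduce bijectivity of $h_N$ on $\mathbb{T}^2$ to bijectivity on a single unstable leaf, and then to exploit the convex-combination structure of the definition together with the anchoring $h_N \equiv h$ on $A^s_f$.

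First I would observe that $h_N$ sends each unstable leaf of $f$ into the corresponding unstable leaf of $g$: for $x \in \mathbb{T}^2\setminus A^u_f$, both holonomy terms $\hol^{s,g}_{h(y(x)),h(x)}(h_N(y(x)))$ and $\hol^{s,g}_{h(z(x)),h(x)}(h_N(z(x)))$ lie in $W^u_g(h(x))$, and the convex combination is formed inside this leaf; for $x \in A^u_f$, $h_N(x) \in A^u_g \subset W^u_g(h(x))$ by construction. Since $h$ already induces a bijection between the unstable foliations of $f$ and $g$, it suffices to show that for every unstable leaf $L = W^u_f(x_*)$ the restriction $h_N|_L \colon L \to W^u_g(h(x_*))$ is a bijection.

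Fix $L$ and set $S := L \cap A^s_f$. By minimality of the stable foliation, $S$ is countable and dense in $L$, and the continuity calculation carried out just before the lemma shows that $h_N \equiv h$ on $S$ regardless of $\rho$. Each connected component $J$ of $L\setminus S$ is a ``strip'' on which the extension of $\rho$ as constant on unstable leaves forces $\rho\circ h \equiv c_J$ for some $c_J \in [0,1]$. On $J$ the defining formula reduces to
\[
h_N(x) = c_J\, A(x) + (1-c_J)\, B(x), \qquad A(x) := \hol^{s,g}_{h(y(x)),\,h(x)}\bigl(h_N(y(x))\bigr),
\]
and $B(x)$ is the analogous expression with $z$ in place of $y$. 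The stable holonomies of $f$ are orientation-preserving local diffeomorphisms between unstable leaves, so $y(x)$ and $z(x)$ traverse $A^u_f$ monotonically as $x$ traverses $J$; the map $h_N|_{A^u_f}\colon A^u_f\to A^u_g$ is a $C^2$ diffeomorphism, because the integrals $I^u_{f,\cdot}$ and $I^u_{g,\cdot}$ are strictly increasing (their densities being strictly positive) and Lemma \ref{lem: Uniformity along A^u_f} provides $C^2$ regularity across the reparameterizations $\sigma_i$; and the stable holonomies of $g$ are again orientation preserving. Choosing orientations on $L$, $A^u_f$ and their $g$-images consistently, both $A$ and $B$ are continuous and strictly monotone on $J$ in the direction of $W^u_g(h(x_*))$ inherited from $L$. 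Hence $h_N$ is strictly monotone on $J$, and extends continuously to $\bar J$ with common boundary value $h(p)$ at each $p\in \bar J\cap S$. Since $h|_L$ is an orientation-preserving homeomorphism of $L$ onto $W^u_g(h(x_*))$, the family $\{h(p)\}_{p\in S}$ is strictly monotone along $W^u_g(h(x_*))$, so the strip-wise monotonicities glue into a global strict monotonicity of $h_N|_L$ on $L$, giving injectivity.

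For surjectivity, $h_N(S) = h(S)$ is dense in $W^u_g(h(x_*))$ because $h|_L$ is a homeomorphism of leaves; a continuous strictly monotone map of $L\simeq\mathbb R$ whose image contains a dense subset of $W^u_g(h(x_*))\simeq\mathbb R$ must be onto. Thus $h_N|_L$ is a bijection for every $L$, and $h_N$ is a bijection of $\mathbb{T}^2$. The principal technical obstacle will be the orientation bookkeeping needed to confirm that $y(x)$ and $z(x)$, after composition with $h_N|_{A^u_f}$ and the $g$-holonomy to $W^u_g(h(x_*))$, produce $A$ and $B$ that are monotone in the same direction, so their convex combination stays monotone; a secondary subtlety arises on the exceptional leaf $L = W^u_f(x_0)$ containing $A^u_f$, where the de la Llave formula and the weighted-holonomy formula must be patched, but the endpoint normalizations and the matching of derivatives built into the $\sigma_i$ guarantee that the two pieces combine into a continuous, strictly monotone bijection of $L$ onto $W^u_g(h(x_0))$.
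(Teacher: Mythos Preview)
Your overall strategy matches the paper's: reduce to a single unstable leaf, chop that leaf into intervals on which $\rho\circ h$ is constant, and argue that on each piece the two holonomy branches $A,B$ are monotone so their convex combination is monotone. The paper does exactly this. However, your decomposition is too coarse and the argument breaks at the step where you claim that $y(x)$ and $z(x)$ traverse $A^u_f$ monotonically on a component $J$ of $L\setminus S$ with $S=L\cap A^s_f$.

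The maps $y(x)$ and $z(x)$ jump whenever the first intersection of $W^s_f(x)$ with $A^u_f$ falls off an endpoint of $A^u_f$, i.e., whenever $x$ lies on the stable segment $[x_0,x_0']_s$ or $[x_1,x_1']_s$. These segments are \emph{not} part of $A^s_f$, so a component $J$ of $L\setminus(L\cap A^s_f)$ can cross them; on such a $J$ the functions $y(x),z(x)$ are discontinuous and your monotonicity claim for them is false. The paper avoids this by taking $x_-,x_+$ to be the first intersections of $L$ with $A^s_f\cup[x_0,x_0']_s\cup[x_1,x_1']_s$ rather than with $A^s_f$ alone; on those finer intervals $y$ and $z$ really are continuous, and one also gets $h_N(x_\pm)=h(x_\pm)$ at the new endpoints (since $h_N(x_0)=h(x_0)$, $h_N(x_0')=h(x_0')$ lie on the same $g$-stable leaf, and likewise for $x_1,x_1'$), which drives both surjectivity and the gluing for injectivity. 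A secondary inaccuracy: $S=L\cap A^s_f$ is syndetic in $L$ (bounded gaps), not dense; your surjectivity argument via density does not go through as stated, though it is easily repaired once you use the finer decomposition and the endpoint identity $h_N(x_\pm)=h(x_\pm)$, exactly as the paper does.
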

\begin{proof}
    Since $h_N$ maps unstable leaves of $f$ to unstable leaves of $g$, it is enough to show that $h_N$ is a bijection when restricted to any unstable leave of $f$. To this end, fix $x\in\mathbb{T}^2$. Since we know that $h_N|_{A^u_f}$ is a bijection, we may assume that $x\in\mathbb{T}^2/A^u_f.$ 
    Let $x_{-}$ and $x_{+}$ be the first two points (one in each direction) on $W^u_f(x)$ that lie on either $A^s_f$, $[x_0,x_0']_s$, or $[x_1,x_1']_s$, see Figure \ref{fig:Figure 2}. Then $h_N(x_-)=h(x_-)$ and $h_N(x_+)=h(x_+)$, and so by continuity, $h_N([x_-,x_+]_u)=[h(x_-),h(x_+)]_u$. By repeating the argument on the next unstable segments following $x_-$ and $x_+$, we find that $h_N$ is surjective. 

    To prove injectivity, it suffices to prove injectivity on the segment $[x_-,x_+]_u$. On the interior $(x_-,x_+)_u$, the holonomy functions $y(x)$ and $z(x)$ are continuous, and by construction, the average $\rho(h(x))$ is constant on $(x_-,x_+)_u$, say with value $\rho(h(x))=\rho$. Since both $h_N(y(x))$ and $h_N(z(x))$ move monotonically along $A^u_g$, the holonomies $\hol^{s,g}_{h(y(x_-)),h(x_-)}(h_N(y(x)))$ and 
    $\hol^{s,g}_{h(z(x_-)),h(x_-)}(h_N(z(x)))$ move monotonically along $(x_-,x_+)_u$. Therefore their weighted average by any constant weight (in particular by $\rho$) moves monotonically along $(x_-,x_+)_u$. Hence $h_N$ is injective.
    
\end{proof}

We will next establish differentiability of $h_N$ along unstable manifolds. Notice that although $h(x)$ appears in our definition of $h_N(x)$, as part of the averaging process $\rho(h(x))$, it will not limit the regularity in the unstable direction since $\rho(h(x))$ is constant along unstable leaves. Since the stable holonomy maps are $C^{1+Zyg}$ (see Section \ref{sec: Regularity of the Stable and Unstable Foliations}) and $h_N|_{A^u_f}$ is $C^2$, we have that $h_N|_{W^u_{f,loc}(x)}$ is $C^{1+Zyg}$ when $W^u_{f,loc}(x)$ does not intersect $A^s_f\cup[x_0,x_0']_s\cup[x_1,x_1']_s$. Thus it remains to establish differentiability of $h_N$ along unstable leaves at points on these stable segments. Consider $x\in A^s_f$ and let $x_n\rightarrow x$ from ``below" on $W^u_f(x)$. Recalling that $\rho(h(x_n))$ is constant, we have
$$
D_uh_N(x_n)=\rho(h(x_n))D_u\left(\hol^{s,g}_{h(y(x_n)),h(x_n)}\circ(h_N|_{A^u_f})\circ y\right)(x_n)+$$
$$
(1-\rho(h(x_n)))D_u\left(\hol^{s,g}_{h(z(x_n)),h(x_n)}\circ(h_N|_{A^u_f})\circ z\right)(x_n)=
$$
$$
\rho(h(x_n))\left(D_u\hol^{s,g}_{h(y(x_n)),h(x_n)}(h_N|_{A^u_f}(y(x_n)))
D_uh_N|_{A^u_f}(y(x_n))D_uy(x_n)\right)+
$$
$$
(1-\rho(h(x_n)))\left(D_u\hol^{s,g}_{h(z(x_n)),h(x_n)}(h_N|_{A^u_f}(z(x_n)))
D_uh_N|_{A^u_f}(z(x_n))D_uz(x_n)\right).
$$
Letting $\rho(h(x_{below})):=\rho(h(x_n))$ and letting $n\rightarrow\infty$ we have 
$$
D_uh_N(x)=\rho(h(x_{below}))\left(D_u\hol^{s,g}_{h(x_0'),h(x)}(h_N|_{A^u_f}(x_0'))
D_uh_N|_{A^u_f}(x_0')D_uy(x)\right)+
$$
$$
(1-\rho(h(x_{below})))\left(D_u\hol^{s,g}_{h(x_1),h(x)}(h_N|_{A^u_f}(x_1))
D_uh_N|_{A^u_f}(x_1)D_uz(x)\right):=
$$
\begin{equation}
    \label{eqn:Expression for Derivative from below}
   \rho(h(x_{below}))D_1(x)+(1-\rho(h(x_{below})))D_2(x). 
\end{equation}

When $x_n\rightarrow x$ from ``above" on $W^u(x)$, we similarly get
$$
D_uh_N(x)=\rho(h(x_{above}))\left(D_u\hol^{s,g}_{h(x_0),h(x)}(h_N|_{A^u_f}(x_0))
D_uh_N|_{A^u_f}(x_0)D_uy(x)\right)+
$$
$$
(1-\rho(h(x_{above})))\left(D_u\hol^{s,g}_{h(x_1'),h(x)}(h_N|_{A^u_f}(x_1'))
D_uh_N|_{A^u_f}(x_1')D_uz(x)\right):=
$$
\begin{equation}
    \label{eqn:Expression for Derivative from above}
    \rho(x_{above})D_3(x)+(1-\rho(x_{above}))D_4(x).
\end{equation}

In order to have continuity of the first derivative, we will need all of the $D_i(x)$ to be equal to a common value $D(x)$, $i=1,2,3,4$. 
This follows from (\ref{eqn:Derivative Condition}):
$$
D_1(x)=(D_u\hol^{s,g}_{h(x_0'),h(x)}(h(x_0')))
(D_uh_N|_{A^u_f}(x_0'))((D_u\hol^{s,f}_{x,x_0'})(x))=
$$
$$
(D_u\hol^{s,g}_{h(x_0'),h(x)}(h(x_0')))
\left(D_u\hol^{s,g}_{h(x_0),h(x_0')}(h(x_0))(D_uh_N|_{A^u_f}(x_0))(D_u\hol^{s,f}_{x_0',x_0}(x_0')) \right)(D_u\hol^{s,f}_{x,x_0'}(x))
$$
$$
=(D_u\hol^{s,g}_{h(x_0),h(x)}(h(x_0)))(D_uh_N|_{A^u_f}(x_0))
(D_u\hol^{s,f}_{x,x_0}(x))=D_3(x).
$$
Here we used the chain rule together with the composition properties $\hol^{s,f}_{x_0',x_0}=\hol^{s,f}_{x,x_0}\circ\hol^{s,f}_{x_0',x}$ and 
$\hol^{s,g}_{h(x_0),h(x_0')}=\hol^{s,g}_{h(x),h(x_0')}\circ\hol^{s,g}_{h(x_0),h(x)}$.
Completely identical calculations show that $D_2(x)=D_3(x)=D_4(x)$, as desired.
%Note that the terms $D_uh_N|_{A^u_f}(x_0)$, $D_uh_N|_{A^u_f}(x_0')$, $D_uh_N|_{A^u_f}(x_1)$, and $D_uh_N|_{A^u_f}(x_1')$ each occur only once in the expressions defining $D_i(x)$ ($i=1,2,3,4)$, and by Lemma \ref{lem:reparam} we can choose values for $D_uh_N|_{A^u_f}(x_0)$, $D_uh_N|_{A^u_f}(x_0')$, $D_uh_N|_{A^u_f}(x_1)$, and $D_uh_N|_{A^u_f}(x_1')$ that forces the $D_i(x)$ to all be equal to one another. Having now fixed values for $D_uh_N|_{A^u_f}$ at these four points, it needs to be verified that, for some other point $\overline{x}\in A^u_f$, the values $D_i(\overline{x})$ are all equal.

Finally, we need to check that $D_uh_N$ is well defined at the points $x_0$ and $x_1$. We begin by considering the $D_uh_N(b)$ as $b$ approaches $x_1$ from ``above" along $W^u_f(x_1)$ (see Figure \ref{fig:Figure1}). Since $\rho(h(b))=1$ we have
$$
D_uh_N(b)=D_u\hol^{s,g}_{h(x_0),h(x_1)}(h(x_1))D_uh_N(\hol^{s,f}_{x_1,x_0}(b))D_u\hol^{s,f}_{x_1,x_0}(b)
$$
Letting $b\rightarrow x_1$ we get the requirement
$$
D_uh_N(x_1)=D_u\hol^{s,g}_{h(x_0),h(x_1)}(h(x_1))D_uh_N(x_0)D_u\hol^{s,f}_{x_1,x_0}(x_1),
$$
which is exactly equation (\ref{eqn:Derivative Condition}). Using the identity
$$
D_uh_N(x_0)=(D_u\hol^{s,g}_{h(x_0),h(x_1)}(h(x_1)))^{-1}D_uh_N(x_0)(D_u\hol^{s,f}_{x_1,x_0}(x_1))^{-1}=
$$
$$
D_u\hol^{s,g}_{h(x_1),h(x_0)}(h(x_1))D_uh_N(x_0)D_u\hol^{s,f}_{x_0,x_1}(x_0),
$$
we are similarly able to establish differentiability at $x_0$. This establishes differentiability of $h_N$ on all of $\mathbb{T}^2$.
\begin{lemma}
\label{lem:Uniform bound for unstable derivative first step}
    For every $f,g\in\mathcal{U}$, $h_N\in C^{1+\alpha}_u$. Moreover, $||D_uh_N||_{\alpha}$ is uniformly bounded in $f,g\in\mathcal{U}$.
\end{lemma}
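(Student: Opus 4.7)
The plan is to exploit the fact that $\rho \circ h$ is \emph{constant} along each unstable leaf except across the three stable arcs $A^s_f$, $[x_0,x_0']_s$, and $[x_1,x_1']_s$. On any slab of an unstable leaf cut out by two consecutive such arcs, the weight $\rho(h(x))$ contributes no derivative, and expanding \eqref{def:h_N} via the chain rule produces
\[
D_uh_N(x)=\rho\cdot D_u\hol^{s,g}_{h(y(x)),h(x)}(h_N(y(x)))\cdot D_uh_N|_{A^u_f}(y(x))\cdot D_uy(x) + (1-\rho)\cdot(\text{analogous term in } z),
\]
where $\rho\in[0,1]$ is a slab-dependent constant and $D_uy$, $D_uz$ are unstable derivatives of $f$-stable holonomies. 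Thus on each slab $D_uh_N$ is a sum of products of three explicit quantities whose regularity I need to control.

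First I would argue that each of the three factors is $C^\alpha$ along the unstable leaf with Hölder seminorm bounded uniformly in $f,g\in\mathcal{U}$. The holonomy-derivative factors $D_u\hol^{s,g}$ and $D_uy$, $D_uz$ inherit $C^{1+\alpha}$ regularity (for every $\alpha<1$) from the $C^{1+\mathrm{Zyg}}$ regularity of the stable foliations of $f$ and $g$ reviewed in Section \ref{sec: Regularity of the Stable and Unstable Foliations}, and the uniformity across $\mathcal{U}$ follows from the Arzelà--Ascoli compactness argument of Section \ref{sec: The Arzela-Ascoli Argument} applied to a $C^1$-neighborhood of a fixed pair. The middle factor $D_uh_N|_{A^u_f}$ is uniformly $C^\alpha$ by Lemma \ref{lem: Uniformity along A^u_f}, and precomposing with the uniformly $C^{1+\alpha}$ holonomies $y,z$ preserves this bound. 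Products and sums of uniformly $C^\alpha$ functions are $C^\alpha$ with seminorm controlled by the inputs, so on each slab $D_uh_N$ satisfies a uniform $C^\alpha$ bound.

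To pass from slabs to the whole unstable leaf, I would patch across the finitely many stable arcs. Continuity of $D_uh_N$ at each arc has already been established in the computation showing $D_1=D_2=D_3=D_4$, which rested on the derivative-matching condition \eqref{eqn:Derivative Condition}. A standard one-dimensional gluing statement then says that a function which is $C^\alpha$ with seminorm $\leq K$ on each of two adjacent closed intervals and continuous at the common endpoint is $C^\alpha$ on the union with seminorm $\leq 2K$; iterating over the three arcs (and the analogous verifications at $x_0,x_1\in A^u_f$) yields a leafwise $C^\alpha$ bound uniform in $f,g\in\mathcal{U}$, proving the lemma.

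The main obstacle I anticipate is the uniformity in $\mathcal{U}$ of the $C^\alpha$ estimates on holonomy derivatives as both basepoint and diffeomorphism vary; this is the content of the technical work deferred to Section \ref{sec: Uniformity of the New Conjugacy}. Once those uniform holonomy estimates and Lemma \ref{lem: Uniformity along A^u_f} are taken as given, the bookkeeping above is essentially algebraic: the key structural point is simply that $\rho\circ h$ is locally constant on unstable leaves, which is why the weighted-holonomy construction does not degrade leafwise regularity.
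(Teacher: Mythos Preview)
Your leafwise argument matches the paper's approach: expand $D_uh_N$ by the chain rule using that $\rho\circ h$ is slab-constant, bound each factor, and patch across the three stable arcs via the equalities $D_1=D_2=D_3=D_4$. The paper carries out exactly this, except that rather than invoking abstract $C^{1+\mathrm{Zyg}}$ regularity plus Arzel\`a--Ascoli for the holonomy-derivative H\"older bound, it does an explicit truncation estimate (splitting the series defining $\log D_u\hol^{s,f}_{a,b}(x)-\log D_u\hol^{s,f}_{a,b}(x')$ at an optimal index $M$ and balancing $\nu_0^M$ against $\nu_1^M d^u_f(x,x')$). This gives a concrete exponent $\min\{\alpha\beta,1-\alpha\}$ depending only on the uniform rates and the $C^2$ bound on $\mathcal{U}$, and avoids the issue that $C^1$-compactness of $\mathcal{U}$ does not by itself control a seminorm depending on $C^2$ data.

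The genuine gap is that you only treat the unstable direction. The paper reads $\|D_uh_N\|_\alpha$ as the full H\"older norm on $\mathbb{T}^2$, and the second half of its proof establishes H\"older continuity of $D_uh_N$ \emph{along stable leaves}: for $x'\in W^s_{f,\delta}(x)$ one must estimate $|D_uh_N(x)-D_uh_N(x')|$. Here $\rho(h(\cdot))$ is no longer constant, so writing $D_uh_N=\rho(h(x))D_1(x)+(1-\rho(h(x)))D_2(x)$ and subtracting produces cross terms $|D_i(x')|\,|\rho(h(x))-\rho(h(x'))|$, controlled by the uniform $\alpha_0$-H\"older bound on $h$ from Lemma~\ref{lem:Uniform Holder}, together with $|D_i(x)-D_i(x')|$, controlled by a direct estimate on $\log D_u\hol^{s,f}$ using that $y(x)=y(x')$ when $x,x'$ share a stable leaf. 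This transverse estimate is what the paper later uses (in Lemma~\ref{lem: Fake Stable Foliation}) to conclude that $\tilde{\mathcal{F}}^{s,g}$ is a $C^{1+\alpha}$ foliation, so it cannot be omitted.
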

\noindent We will postpone the proof of the uniformity statement to Section \ref{sec: Uniformity of the New Conjugacy}.

Notice that although $h_N$ preserves the unstable foliations, it does not preserve the stable foliation; that is, $h_N$ does not in general send stable leaves of $f$ to stable leaves of $g$. However, we claim that $h_N$ maps the stable foliation to a $C^{1+\alpha}$ foliation with $C^{2}$ leaves. To make this precise, let 
$\Tilde{W}^s_g(x):=h_N(W^s_f(h^{-1}_N(x)))$, and let $\Tilde{\mathcal{F}}^{s,g}$ denote the foliation by the leaves $\Tilde{W}^s_g(x)$.
\begin{lemma}
\label{lem: Fake Stable Foliation}
    For every $g\in\mathcal{U}$, $\Tilde{\mathcal{F}}^{s,g}$ is a $C^{1+\alpha}$ foliation consisting of $C^2$ leaves.
\end{lemma}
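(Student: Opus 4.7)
My plan is to split the claim into two parts that can be handled by different perspectives on the construction: (i) the family $\{\Tilde{W}^s_g(x)\}_{x\in\mathbb{T}^2}$ forms a $C^{1+\alpha}$ foliation (transverse regularity), and (ii) each individual leaf is a $C^2$ submanifold of $\mathbb{T}^2$.

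For part (i), I would exploit the fact that $h_N$ is a homeomorphism that is $C^{1+\alpha}$ on unstable leaves with uniform bounds (Lemma \ref{lem:Uniform bound for unstable derivative first step}) and maps unstable leaves of $f$ to unstable leaves of $g$. Since $\Tilde{\mathcal{F}}^{s,g}$ is by definition the $h_N$-pushforward of $\mathcal{F}^{s,f}$, the holonomy of $\Tilde{\mathcal{F}}^{s,g}$ between two unstable leaves $W^u_g(q)$ and $W^u_g(q')$ (with $q'\in\Tilde{W}^s_g(q)$) is the conjugate of the stable holonomy of $f$:
$$
H = h_N \circ \hol^{s,f}_{h_N^{-1}(q),\, h_N^{-1}(q')} \circ h_N^{-1},
$$
where $h_N^{\pm 1}$ are restricted to the relevant unstable leaves of $f$ and $g$. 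Because $\hol^{s,f}$ is $C^{1+Zyg}$ in the area-preserving setting (Section \ref{sec: Regularity of the Stable and Unstable Foliations}) and each $h_N^{\pm 1}$ is $C^{1+\alpha}$ on unstable leaves with derivatives uniformly bounded away from zero and infinity, the composition $H$ is $C^{1+\alpha}$, with uniform bounds on the H\"older seminorm across $\mathcal{U}$.

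For part (ii), I would analyze the construction formula along a fixed stable leaf $W^s_f(p)$ of $f$. On such a leaf the holonomy anchor points $y(x)=y_0$ and $z(x)=z_0$ are constant (they depend only on the leaf), so $h_N(x)$ equals a weighted average (in arclength along the unstable leaf $W^u_g(h(x))$) of two points $\pi_1(x)\in W^s_g(h_N(y_0))$ and $\pi_2(x)\in W^s_g(h_N(z_0))$, with weight $\rho(h(x))$. The key trick is to parameterize $L:=h_N(W^s_f(p))$ not by $x\in W^s_f(p)$ (where only H\"older regularity is visible, inherited from $h|_{W^s_f(p)}$), but by the arclength coordinate along the $C^2$ stable leaf $W^s_g(h(p))$ of $g$, which indexes the unstable leaves $W^u_g(h(x))$ transversed by $L$. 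Under this reparameterization: the anchor curves $\pi_i$ are traced out in the $C^2$ stable leaves $W^s_g(h_N(y_0))$ and $W^s_g(h_N(z_0))$ of $g$ via the smooth stable holonomies of $g$; the weight $\rho$ is a smooth function on a transversal chosen in the construction, pulled back via unstable holonomy of $g$; and the weighted combination is taken in arclength along $C^2$ unstable leaves of $g$. Combining these observations gives a $C^2$ parameterization of $L$.

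The main obstacle will be carefully tracking the regularity of the weight $\rho$ after it is transported to the natural parameterization of $L$ adapted to the $g$-dynamics, and verifying that the weighted interpolation of two $C^2$ curves by a suitably smooth weight yields the claimed $C^2$ regularity of $L$ rather than just $C^{1+Zyg}$; this is where the specific choices made in defining $\rho$ (smooth on its transversal, extended along unstable leaves) will be used. Uniformity of all the bounds across $f,g\in\mathcal{U}$ then follows from the uniformity of the holonomy regularity of $g$, the uniformity of $h_N$ on unstable leaves (Lemma \ref{lem:Uniform bound for unstable derivative first step}), and the Arzel\`a--Ascoli argument of Section \ref{sec: The Arzela-Ascoli Argument}.
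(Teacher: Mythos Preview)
Your proposal is correct and follows essentially the same approach as the paper: for the leaf regularity you reparameterize by the $g$-stable coordinate (the paper calls this ``reparameterization by $h^{-1}$'') so that the anchor curves and the weight become smooth, and for the transverse regularity you use $h_N\in C^{1+\alpha}_u$ --- your explicit holonomy conjugation $H=h_N\circ\hol^{s,f}\circ h_N^{-1}$ is precisely what the paper's terse final sentence encodes. Your caution about whether the leaves are genuinely $C^2$ rather than $C^{1+Zyg}$ is well placed; the paper is somewhat casual on this point (the unstable holonomy of $g$ that transports $\rho$ and $\pi_2$ is only $C^{1+Zyg}$), but since only $C^{1+\alpha}$ is needed downstream this does not affect the main theorem.
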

\begin{proof}
    Fix $x\in\mathbb{T}^2/A^u_f$, and consider the local stable manifold through $x$, $W^s_{f,\delta}(x)$, where $\delta>0$ is such that $W^s_{f,\delta}(x)\cap A^u_f=\emptyset$. For every $x'\in W^s_{f,\delta}(x)$, $y:=y(x')$ and $z:=z(x')$ are constant. Therefore, $\hol^{s,g}_{h(y),h(x')}(h_N(y))$ and $\hol^{s,g}_{h(z),h(x')}(h_N(z))$ trace out segments of the stable manifolds $W^s_g(h_N(y))$ and $W^s_g(h_N(z))$, respectively. Since $g$ is $C^2$, these submanifolds are also $C^2$. However, they are parameterized by the H\"older continuous function $h(x)$, which accounts for $h_N(x)$ being only H\"older continuous in the stable direction. Then $h_N(x)$ is attained by averaging $W^s_g(h_N(y))$ and $W^s_g(h_N(z))$ along $W^u_g(h(x))$ with respect to the function $\rho(h(x))$. The function $\rho(x)$ is $C^2$ along stable leaves of $g$, but is also parameterized by the H\"older continuous function $h(x)$. Therefore, $h_N(W^s_{f,\delta}(x))$ can be regarded, after a reparameterization by $h^{-1}$, as a $C^2$ of $\rho$ over the $C^2$ manifold $W^s_g(h_N(y))$. Moreover, since $\rho(x)= 1$ sufficiently close to $h_N(y)$ on the left (relative to \ref{fig:Figure1}) and $\rho(x)= 0$ sufficiently close to $h_N(z)$ on the right, 
    $\Tilde{W}^s_{g,\delta}(x)=W^s_{g,\delta}(x)$ 
    for $x\in A^u_g$ and $\delta>0$ sufficiently small. Therefore, $\Tilde{\mathcal{F}}^{s,g}$ consists of $C^2$ leaves.

    Finally, since $h_N\in C^{1+\alpha}_u$ by Lemma \ref{lem:Uniform bound for unstable derivative first step}, it maps the stable foliation of $f$ to a foliation that is $C^{1+\alpha}$ transversal to the unstable foliation of $g$.
\end{proof}
The homeomorphism $h_N$ is not differentiable in the stable direction. This is easy to see, since $h_N|_{A^s_f}=h|_{A^s_f}$, and our original conjugacy is assumed to not be $C^1_s$ (or else there would be nothing to prove). The next step in the construction is to repeat the previous phase in the stable direction, using $h_N$ in place of $h$, as we now explain.

From the conjugacy equation $hf=gh$ we get $hf^{-1}=g^{-1}h$. Observe that $f^{-1}$ (and likewise $g^{-1}$) is an Anosov diffeomorphism whose stable and unstable manifolds are switched from those of $f$: $W^u_f(x)=W^s_{f^{-1}}(x)$. In particular, $A^s_f$ is a segment of unstable manifold for $f^{-1}$. Let $\mu_{f^{-1}}$ and $\mu_{g^{-1}}$ be the SRB measures of $f^{-1}$ and $g^{-1}$ respectively. Then if $h$ matched all of the stable periodic data for $f$ and $g$, we would have $h_*\mu_{f^{-1}}=\mu_{g^{-1}}$, from which it follows that $h$ pushes forward the conditional measures along unstable leaves of $f^{-1}$ to the corresponding conditional measures of $\mu_{g^{-1}}$. Then, as before, we get the representation
$$
h=(I^s_{g,h(x_0)})^{-1}\circ I^s_{f,x_0}.
$$
As in the first step of the construction, we break the segment $A^s_f$ into three subsegments: $A^s_{f,1}=[x_0,x_0'']_s$,
$A^s_{f,2}=[x_0'',x_1'']_s$, and $A^s_{f,1}=[x_1'',x_1]_s$. We then define
$$
\overline{h}_N|_{A^s_{f,1}}(x):=(I^s_{g,h(x_0)})^{-1}\circ I^s_{f,x_0}(x),
$$
for appropriately normalized conditional measures, and we define 
$\overline{h}_N|_{A^s_{f,2}}$ and $\overline{h}_N|_{A^s_{f,3}}$ in the same exact way. After applying $C^2$ reparameterizations 
$\overline{\sigma}_i:A^s_{f,i}\rightarrow A^s_{f,1}$,
we obtain a $C^2$ diffeomorphism $\overline{h}_N|_{A^s_f}:A^s_f\rightarrow A^s_f$ which agrees with $h$ at the points $x_0, x_0'', x_1''$, and $x_1$. 

As before we want to extend the domain of $\overline{h}_N|_{A^s_f}$ to all of $\mathbb{T}^2$ via a weighted holonomy construction. Given a point $x\in\mathbb{T}^2$, we have two choices of unstable holonomy to make leading to points $\overline{y}(x),\overline{z}(x)\in A^s_f$ (in Figure \ref{fig:Figure1} we think of $\overline{y}(x)$ as going ``up" and $\overline{z}(x)$ as going ``down"). Then we apply $\overline{h}_N|_{A^s_f}$ to each of these points and take the unstable holonomy back to $\Tilde{W}^s_g(h_N(x))$, instead of to $W^s_g(h(x))$. Taking a weight function
$\overline{\rho}:\mathbb{T}^2/A^s_f\rightarrow [0,1]$, we then define
$$
\overline{h}_N(x)=\overline{\rho}(h_N(x))\Tilde{\hol}^{u,g}_{h(\overline{y}(x)),h_N(x)}((\overline{h}_N)|_{A^s_f}(\overline{y}(x)))+
$$
\begin{equation}
\label{eqn: Definition of second step Construction}
    (1-\overline{\rho}(h_N(x)))\Tilde{\hol}^{u,g}_{h(\overline{z}(x)),h_N(x)}((\overline{h}_N)|_{A^s_f}(\overline{z}(x))),
\end{equation}
where $\Tilde{\hol}^{u,g}$ denotes the unstable holonomy between leaves of the foliation $\Tilde{\mathcal{F}}^{s,g}$.

This definition forces $\overline{h}_N|_{A^u_f}=h_N|_{A^u_f}$. As before, we may allow discontinuities of $\overline{\rho}$ as it crosses $A^u_g$ and so we define it to be constant along stable leaves of $f$ up until their first intersection with $A^u_f$. For continuity, we require that $\overline{\rho}(x)\rightarrow 1$ as $x$ approaches $A^s_f$ from ``below" and $\overline{\rho}(x)\rightarrow 0$ as $x$ approaches $A^s_f$ from ``above." For continuity of the derivative, it will also be important for us to require that $D_u\overline{\rho}(x)\rightarrow 0$ as $x$ approaches $A^s_f$ from either direction. In fact, we will assume the stronger condition that $\overline{\rho}(x)$ is identically $1$ in a small neighborhood ``below" $A^s_f$ and identically equal to $0$ in a small neighborhood ``above" $A^s_f$. Analogously to Lemma \ref{lem:Uniform bound for unstable derivative first step}, the conjugacy $\overline{h}_N$ is uniformly smooth in the stable direction:

\begin{lemma}
\label{lem:s-diff of second step}
    For every $f,g\in\mathcal{U}$, $\overline{h}_N\in C^{1+\alpha}_s$. Moreover, there exists $M_1>0$ such that every $f,g\in\mathcal{U}$ and every $x\in\mathbb{T}^2$, $||\overline{h}_N|_{W^s_f(x)}||_{C^{1+\alpha}}\leq M_1$. 
\end{lemma}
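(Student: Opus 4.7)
The proof is completely analogous to the proof of Lemma \ref{lem:Uniform bound for unstable derivative first step}, with the roles of the stable and unstable foliations exchanged, and with the auxiliary foliation $\Tilde{\c{F}}^{s,g}$ from Lemma \ref{lem: Fake Stable Foliation} playing the role that $\c{F}^{s,g}$ played in the first step. The plan is as follows. First, I would establish the stable analog of Lemma \ref{lem: Uniformity along A^u_f}: the function $\overline{h}_N|_{A^s_f}:A^s_f\to A^s_g$, built from the stable SRB densities of $f$ and $g$ (i.e.\ the unstable densities of $f^{-1}$ and $g^{-1}$, for which formula (\ref{eqn: Conditional Density Normalized to 1 at a point Formula}) applies with $f$ replaced by $f^{-1}$) and the $C^2$ reparameterizations $\overline{\sigma}_i$ supplied by Lemma \ref{lem:reparam}, is uniformly $C^2$ on $\c{U}$.

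Next, fix a local stable leaf $W^s_{f,loc}(x)$ not meeting $A^u_f\cup[x_0,x_0'']_u\cup[x_1,x_1'']_u$. On such a leaf, $\overline{\rho}(h_N(\cdot))$ is constant by construction; the maps $x\mapsto\overline{y}(x),\overline{z}(x)$ are unstable holonomies of $f$, uniformly $C^{1+Zyg}$ on $\c{U}$ by Section \ref{sec: Regularity of the Stable and Unstable Foliations}; and the unstable holonomies $\Tilde{\hol}^{u,g}$ between the $C^2$ leaves of $\Tilde{\c{F}}^{s,g}$ inherit $C^{1+\alpha}$ regularity from Lemma \ref{lem: Fake Stable Foliation}. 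Differentiating (\ref{eqn: Definition of second step Construction}) along $W^s_{f,loc}(x)$ via the chain rule expresses $D_s\overline{h}_N$ as a convex combination of products of three $C^\alpha$ factors, each of whose H\"older norm is uniformly bounded on $\c{U}$, yielding the desired bound on every connected component of $W^s_f(x)\setminus(A^u_f\cup[x_0,x_0'']_u\cup[x_1,x_1'']_u)$.

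To glue across this exceptional locus, I would repeat in the stable direction the four-one-sided-derivative computation used between (\ref{eqn:Expression for Derivative from below}) and (\ref{eqn:Expression for Derivative from above}), obtaining four candidate values $\overline{D}_1,\overline{D}_2,\overline{D}_3,\overline{D}_4$. Their agreement is forced by the stable analog of the derivative-matching identity (\ref{eqn:Derivative Condition}), namely
\begin{equation*}
D_s\overline{h}_N(x_i)=D_s\Tilde{\hol}^{u,g}_{h(x_0),h(x_i)}(h(x_0))\,D_s\overline{h}_N(x_0)\,D_s\hol^{u,f}_{x_i,x_0}(x_i),\quad x_i\in\{x_0'',x_1'',x_1\},
\end{equation*}
which is imposed on the $\overline{\sigma}_i$ via Lemma \ref{lem:reparam} and verified using the chain rule together with the composition properties $\hol^{u,f}_{x_i,x_0}=\hol^{u,f}_{x,x_0}\circ\hol^{u,f}_{x_i,x}$ and the analogous identity for $\Tilde{\hol}^{u,g}$. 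Differentiability at the corners $x_0$ and $x_1$ themselves follows from the same identity applied to $\Tilde{\hol}^{u,g}_{h(x_0),h(x_1)}\circ\Tilde{\hol}^{u,g}_{h(x_1),h(x_0)}=\id$, exactly as in the corresponding step for $h_N$.

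The main obstacle is not differentiability but uniformity. Since the foliation $\Tilde{\c{F}}^{s,g}$ is itself constructed from $h_N$, and therefore indirectly from the initial conjugacy $h$, obtaining a bound on the $C^{1+\alpha}$ norm of $\Tilde{\hol}^{u,g}$ that depends only on $\c{U}$ requires genuine work. This reduces, via Lemma \ref{lem: Fake Stable Foliation}, to combining the uniform control of $D_uh_N$ from Lemma \ref{lem:Uniform bound for unstable derivative first step}, the uniform regularity of the weight $\rho$ built in at construction time, the uniform $C^{1+Zyg}$ bounds on the genuine stable and unstable foliations of $g$ from Section \ref{sec: Regularity of the Stable and Unstable Foliations}, and the Arzel\`a-Ascoli compactness argument of Section \ref{sec: The Arzela-Ascoli Argument}. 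Following the pattern set for Lemma \ref{lem:Uniform bound for unstable derivative first step}, the detailed technical verification of the uniformity claim is deferred to Section \ref{sec: Uniformity of the New Conjugacy}.
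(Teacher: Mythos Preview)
Your proposal is correct and follows essentially the same approach as the paper. Both identify the argument as parallel to Lemma \ref{lem:Uniform bound for unstable derivative first step} with the one genuinely new ingredient being uniform $C^{1+\alpha}$ control of the fake unstable holonomy $\Tilde{\hol}^{u,g}$, obtained via Lemma \ref{lem: Fake Stable Foliation} by writing leaves of $\Tilde{\c F}^{s,g}$ as $C^{1+\alpha}$ graphs over genuine stable leaves of $g$ and composing with the uniformly $C^{1+\alpha}$ genuine unstable holonomy $\hol^{u,g}$; your write-up simply spells out more of the surrounding details (the stable analog of Lemma \ref{lem: Uniformity along A^u_f}, the gluing computation, the derivative-matching condition) that the paper leaves implicit.
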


The next step is to show that we maintained differentiability in the unstable direction with this construction and have $\overline{h}_N\in C^{1+\alpha}_u$.

\begin{lemma}
\label{lem: u-diff Second Step}
    For every $f,g\in\mathcal{U}$, $\overline{h}_N\in C^{1+\alpha}_u$. Moreover, there exists $M_2>0$ such that every $f,g\in\mathcal{U}$ and every $x\in\mathbb{T}^2$, $||\overline{h}_N|_{W^u_f(x)}||_{C^{1+\alpha}}\leq M_2$.
\end{lemma}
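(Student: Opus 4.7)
The strategy is to mimic the proof of Lemma \ref{lem:Uniform bound for unstable derivative first step}, but with the stable foliation of $g$ replaced everywhere by the fake stable foliation $\tilde{\mathcal{F}}^{s,g}$ whose regularity was established in Lemma \ref{lem: Fake Stable Foliation}. I fix an arbitrary $x \in \mathbb{T}^2$ and differentiate the defining formula (\ref{eqn: Definition of second step Construction}) for $\overline{h}_N$ along the unstable leaf $W^u_f(x)$. Along this leaf the points $\overline{y}(x)$ and $\overline{z}(x)$ are constant, because they are obtained by unstable holonomy of $f$ onto $A^s_f$ from the very leaf we are moving along. Consequently the ``base points'' $p_y := \overline{h}_N|_{A^s_f}(\overline{y}(x))$ and $p_z := \overline{h}_N|_{A^s_f}(\overline{z}(x))$ on $A^s_g$ are constant, as are $h(\overline{y}(x))$ and $h(\overline{z}(x))$, and the only $x$-dependence enters through the target point $h_N(x)$, which varies along $W^u_g(h(x))$ as a $C^{1+\alpha}$ function of $x$ by Lemma \ref{lem:Uniform bound for unstable derivative first step}.

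The holonomy term $\tilde{\hol}^{u,g}_{h(\overline{y}(x)), h_N(x)}(p_y)$ is the intersection of the fixed $C^2$ unstable leaf $W^u_g(p_y)$ with the varying fake stable leaf $\tilde{W}^{s,g}(h_N(x))$. Since $\tilde{\mathcal{F}}^{s,g}$ is $C^{1+\alpha}$ with $C^2$ leaves (Lemma \ref{lem: Fake Stable Foliation}), the unstable holonomy in $\tilde{\mathcal{F}}^{s,g}$ between transversals is $C^{1+\alpha}$, and composing with the $C^{1+\alpha}$ dependence of $h_N(x)$ on $x$ gives $C^{1+\alpha}$ dependence of the holonomy term on $x$ along $W^u_f(x)$. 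The same argument applies to the $z$-term. The weight $\overline{\rho}(h_N(x))$ is arranged, in direct analogy with the role of $\rho \circ h$ in the first step, to be $C^{1+\alpha}$ along $W^u_f(x)$: its transverse variation inherits $C^{1+\alpha}$ regularity from the $C^{1+Zyg}$ regularity of the relevant stable foliation and from the $C^{1+\alpha}_u$ bound on $h_N$. Hence $\overline{h}_N|_{W^u_f(x)}$, being a weighted average of two $C^{1+\alpha}$ quantities by a $C^{1+\alpha}$ coefficient, is $C^{1+\alpha}$ on the complement of the jump locus of $\overline{\rho}$.

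At points where $\overline{\rho}(h_N(x))$ jumps (as $x$ crosses $A^u_g$ along an unstable leaf of $f$), the construction was set up so that $\overline{\rho}$ is identically $0$ or $1$ on a full neighborhood of each such crossing. Consequently the one-sided expressions for $D_u\overline{h}_N$ on either side of the jump reduce to a single holonomy-derivative term, and the matching of one-sided derivatives follows from the holonomy composition identity exactly as in the computation that $D_1(x) = D_3(x)$ in the proof of Lemma \ref{lem:Uniform bound for unstable derivative first step}, combined with the derivative condition imposed on $\overline{h}_N|_{A^s_f}$ at the endpoints $x_0,x_0'',x_1'',x_1$ (the analog of condition (\ref{eqn:Derivative Condition})). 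Differentiability at the endpoints of $A^u_f$ is handled by the same holonomy-inversion trick used at the end of the proof of Lemma \ref{lem:Uniform bound for unstable derivative first step}.

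The uniformity of $\|\overline{h}_N|_{W^u_f(x)}\|_{C^{1+\alpha}}$ follows from uniformity of each input: Lemma \ref{lem:Uniform bound for unstable derivative first step} gives a uniform $C^{1+\alpha}_u$ bound on $h_N$; the stable-direction analog of Lemma \ref{lem: Uniformity along A^u_f} yields a uniform $C^2$ bound on $\overline{h}_N|_{A^s_f}$; and $\overline{\rho}$ is chosen uniformly on $\mathcal{U}$. The main obstacle, as expected, is obtaining a uniform $C^{1+\alpha}$ bound on the fake stable foliation $\tilde{\mathcal{F}}^{s,g}$: since this foliation is manufactured from $f$, $g$, and the intermediate $h_N$, uniformity must be traced carefully through the first-step construction and through the defining formula $\tilde{W}^s_g(x) = h_N(W^s_f(h_N^{-1}(x)))$. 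This tracking is exactly of the same flavor as the other technical uniformity arguments in the paper, and I defer it to Section \ref{sec: Uniformity of the New Conjugacy}.
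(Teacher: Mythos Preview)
Your proposal is correct and shares the same core input as the paper's proof---namely, that $h_N\in C^{1+\alpha}_u$ (Lemma~\ref{lem:Uniform bound for unstable derivative first step}), that $\tilde{\mathcal{F}}^{s,g}$ is $C^{1+\alpha}$ (Lemma~\ref{lem: Fake Stable Foliation}), and that $\overline{\rho}$ can be chosen uniformly $C^{1+\alpha}$---but the route is organized differently. You differentiate the defining formula (\ref{eqn: Definition of second step Construction}) term by term along $W^u_f(x)$, verifying that the holonomy terms and the weight are each $C^{1+\alpha}$, and then you separately treat the jump loci by replaying the $D_1=D_3$ computation with the analog of condition~(\ref{eqn:Derivative Condition}). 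The paper instead first observes, by the same argument as Lemma~\ref{lem: Fake Stable Foliation}, that $\overline{h}_N$ carries $\mathcal{F}^{u,f}$ to a new foliation $\tilde{\mathcal{F}}^{u,g}$ with $C^2$ leaves; this packages all the jump-locus matching into the statement that the image leaves are globally $C^2$, so the only thing left to check is that the \emph{parameterization} is $C^{1+\alpha}$, which follows immediately from $h_N\in C^{1+\alpha}_u$ and the regularity of $\overline{\rho}$. Your approach is more explicit and closer to a direct verification; the paper's is more economical because the fake-foliation viewpoint lets it recycle Lemma~\ref{lem: Fake Stable Foliation} wholesale rather than redoing the derivative-matching computation. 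One small slip: the jump in $\overline{y}(x),\overline{z}(x)$ along an unstable leaf of $f$ occurs when $x$ crosses $A^s_f$, not $A^u_g$.
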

We defer the proofs of Lemmas \ref{lem:s-diff of second step} and \ref{lem: u-diff Second Step} to the next subsection.
Combining these lemmas together with the Journ\'e lemma, we have the following:
\begin{theorem}
\label{Thm:Uniform Conjugacy Bounds}
    For every $f,g\in\mathcal{U}$, $\overline{h}_N\in C^{1+\alpha}$. Moreover, there exists $M>0$ independent of $N\in\mathbb{N}$ and $f,g\in\mathcal{U}$ such that $||\overline{h}_N||_{C^{1+\alpha}}\leq M$.
\end{theorem}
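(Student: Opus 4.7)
The plan is to combine the uniform leafwise estimates already obtained with Journ\'e's Lemma (Theorem \ref{thm: Journe Lemma}). Lemma \ref{lem:s-diff of second step} gives $\overline{h}_N \in C^{1+\alpha}_s$ with $\|\overline{h}_N\|_{C^{1+\alpha}_s} \leq M_1$ uniformly in $f,g \in \mathcal{U}$ and in $N$, and Lemma \ref{lem: u-diff Second Step} gives $\overline{h}_N \in C^{1+\alpha}_u$ with $\|\overline{h}_N\|_{C^{1+\alpha}_u} \leq M_2$ uniformly. Journ\'e's identity $C^{1+\alpha} = C^{1+\alpha}_s \cap C^{1+\alpha}_u$ then immediately yields $\overline{h}_N \in C^{1+\alpha}$ for each fixed pair.

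The remaining task is to produce a \emph{uniform} constant $M$ bounding $\|\overline{h}_N\|_{C^{1+\alpha}}$. Journ\'e's argument is genuinely quantitative: a careful reading shows that $\|\psi\|_{C^{1+\alpha}}$ is controlled by $\|\psi\|_{C^{1+\alpha}_s}$, $\|\psi\|_{C^{1+\alpha}_u}$, together with data measuring transversality and the H\"older regularity of the two ambient foliations on the source, in our case $\mathcal{F}^{s,f}$ and $\mathcal{F}^{u,f}$. By Section \ref{sec: Regularity of the Stable and Unstable Foliations} both foliations are $C^{1+\mathrm{Zyg}}$ for each individual $f$; applying the Arzel\`a-Ascoli compactness argument of Section \ref{sec: The Arzela-Ascoli Argument} --- uniform regularity in a $C^1$-neighborhood of each $f \in \mathcal{U}$, then finitely many such neighborhoods cover $\mathcal{U}$ --- produces uniform H\"older bounds on these foliations across $\mathcal{U}$. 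Uniform transversality of $E^s_f$ and $E^u_f$ is likewise built into the constants $\mu_0,\nu_0,\mu_1,\nu_1,C_{\mathcal{U}}$ of \eqref{eqn:Anosov Condition}. Feeding these uniform inputs, together with $M_1$ and $M_2$, into Journ\'e's bound yields the desired constant $M$, independent of $N$ and of the pair $f,g \in \mathcal{U}$.

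The substantive work has already been absorbed by the preceding lemmas and their (deferred) proofs in Section \ref{sec: Uniformity of the New Conjugacy}, which handle the reparameterizations $\sigma_i,\overline{\sigma}_i$, the H\"older cocycle estimates for holonomies, and the weighted-average construction. The main obstacle at this stage is purely bookkeeping: one must confirm that the universal constant provided by Journ\'e's Lemma depends only on the foliation data of $f$ and not on any feature that could drift with $N$ --- in particular, it does \emph{not} depend on the auxiliary foliation $\widetilde{\mathcal{F}}^{s,g}$ of Lemma \ref{lem: Fake Stable Foliation}, since Journ\'e's hypothesis is imposed on the source $\mathbb{T}^2$. Once this observation is made, the theorem follows by direct application of Theorem \ref{thm: Journe Lemma}, with $M := M(M_1,M_2,\mathcal{U})$.
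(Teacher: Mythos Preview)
Your proposal is correct and follows exactly the paper's approach: the paper simply states that Theorem \ref{Thm:Uniform Conjugacy Bounds} follows by combining Lemmas \ref{lem:s-diff of second step} and \ref{lem: u-diff Second Step} with Journ\'e's Lemma. Your additional remarks on why the Journ\'e constant is uniform over $\mathcal{U}$ (via the Arzel\`a--Ascoli argument applied to the foliation data of $f$) make explicit what the paper leaves implicit, but the structure of the argument is identical.
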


\subsection{Uniformity of the New Conjugacy}
\label{sec: Uniformity of the New Conjugacy}
In this subsection we proof the uniformity claims of the previous section.
\begin{proof}[Proof of Lemma \ref{lem: Uniformity along A^u_f}]
    By the chain rule, for $x\in [x_0,x_0']_u$,
    \begin{equation}
        \label{eqn: Formula for h_N'}
        h_N'(x)=((I^u_{g,h(x_0)})^{-1})'(I^u_{f,x_0}(x))((I^u_{f,x_0})'(x))=
        \frac{\omega^u_f(x)}{\omega^u_g((I^u_{g,h(x_0)})^{-1}(I^u_{f,x_0}(x)))}=
        \frac{\omega^u_f(x)}{\omega^u_g(h_N(x))},
    \end{equation}
    where $\omega^u_f$ and $\omega^u_g$ are the normalized densities of $\mu^u_{f,x_0}$ and $\mu^u_{g,h(x_0)}$, respectively. Let
    $$
    \omega^u_f(x,x_0)=\prod_{i=1}^\infty \frac{D_uf(f^{-n}(x))}{D_uf(f^{-n}(x_0))}
    $$
    be the density of the conditional measure normalized by the condition $\omega^u_f(x_0,x_0)=1$. Then
    $$
    \omega^u_f(x)=\frac{\omega^u_f(x,x_0)}{\int_{x_0}^{x_0'}\omega^u_f(z,x_0)dm(z)},
    $$
    and similarly for $\omega^u_g(x)$. We first uniformly bound the density $\omega^u_f(x_0,x_0)$ in terms of the hyperbolicity rates (\ref{eqn:Anosov Condition}) and the length of the interval $[x_0,x_0']_u$. We begin by taking logarithms
    $$
    \log \omega^u_f(x,x_0)=\sum_{i=1}^\infty(\log D_uf(f^{-n}(x))-\log D_uf(f^{-n}(x_0)))\leq
    $$
    $$
    \sum_{i=1}^\infty|\log D_uf|_{C^1}d^u(f^{-n}(x),f^{-n}(x_0))\leq
    \frac{|f|_{C^2}}{\mu_1}\sum_{i=1}^\infty\nu_1^{-n}d^u(x,x_0)=Kd^u(x,x_0),
    $$
    with $K>0$ uniform in $f\in\mathcal{U}$. Thus $\omega^u_f(x,x_0)\leq e^{Kd^u(x,x_0)}$, by a symmetric argument with $\omega^u_f(x,x_0)^{-1}$ gives the lower bound
    $\omega^u_f(x,x_0)\geq e^{-Kd^u(x,x_0)}$. Therefore
    \begin{equation}
    \label{eqn:Uniform Bound on Density}    
    \omega^u_f(x)\leq \frac{e^{Kd^u(x_0,x_0')}}{\int_{x_0}^{x_0'}e^{-Kd^u(x_0,x_0')}dm(z)}=\frac{e^{2Kd^u(x_0,x_0')}}{d^u(x_0,x_0')}.
    \end{equation}
    We next must establish uniform upper and lower bounds on the length of $[x_0,x_0']_u$ for different choices of $f$. This will be done by establishing $C^1$ local uniformity and using the Arzel\`a-Ascoli argument. Namely, given $\varepsilon>0$ sufficiently small, by structural stability there exists $\delta>0$ such that if $d_{C^1}(f,\overline{f})<\delta$, then $f$ and $\overline{f}$ are conjugate by a homeomorphism $h_{f,\overline{f}}$ satisfying
    $d_{C^0}(h_{f,\overline{f}},\id)+d_{C^0}(h_{f,\overline{f}}^{-1},\id)<\varepsilon$. Moreover, by Lemma \ref{lem:Uniform Holder}, the H\"older exponent and seminorms for $h_{f,\overline{f}}$ and $h_{f,\overline{f}}^{-1}$ are uniformly bounded. We let $A^u_{\overline{f},i}=h_{f,\overline{f}}(A^u_{f,i})$ for $i=1,2,3$ and we have
    \begin{equation}
        \label{eqn: Length Upper bound}
    d(h_{f,\overline{f}}(x_0),h_{f,\overline{f}}(x_0'))\leq
    |h_{f,\overline{f}}|_{\alpha_0}d(x_0,x_0')^{\alpha_0}.    
    \end{equation}
    For a lower bound, we have
    $$
    d(x_0,x_0')\leq |h_{f,\overline{f}}^{-1}|_{\alpha_0}d(h_{f,\overline{f}}(x_0),h_{f,\overline{f}}(x_0'))^{\alpha_0},
    $$
    which gives
    \begin{equation}
        \label{eqn: Length Lower Bound}
    |h_{f,\overline{f}}^{-1}|^{-1/\alpha_0}_{\alpha_0}d(x_0,x_0')^{1/\alpha_0}\leq d(h_{f,\overline{f}}(x_0),h_{f,\overline{f}}(x_0')).
    \end{equation}
    This proves that the lengths of the $A^u_{\overline{f},i}$ can be made uniform in a $C^1$ neighborhood of $f$.
    Then by (\ref{eqn:Uniform Bound on Density}), we have a uniform upper bound on $\omega^u_f(x)$ in a sufficiently small $C^1$ neighborhood of $f$. We likely get a uniform lower bound for $\omega^u_f(x)$ as well as the same bounds for $\omega^u_g$. Therefore, by (\ref{eqn: Formula for h_N'}), $h_N'(x)$ is uniformly bounded both above and below for all $x\in A^u_{f,i}$ and so the difference between $h_N'(x_0')$ from each side is also uniformly bounded. Recall that the derivatives at the endpoints of the intervals $A^u_{f,i}$ must be given by (\ref{eqn:Derivative Condition}). The derivative of the stable holonomy is given by the formula
    \begin{equation}
        \label{eqn:Derivative of Holonomy Formula}
        D_u\hol^{s,f}_{a,b}(x)=\prod_{n=0}^\infty\frac{D_uf(f^n(x))}{D_uf(f^n(\hol^{s,f}_{a,b}(x)))}.
    \end{equation}
    The same arguments used to establish uniform bounds for $\omega^u_f(x,x_0)$ also establish uniform bounds on $D_u\hol^{s,f}_{x_i,x_0}(x_i)$, where $x_i$ ranges over $x_0',x_1',$ and $x_1$, and likewise for $D_u\hol^{s,g}$. Therefore the $C^1$ sizes of the reparameterizations $\sigma_i$ needed to satisfy condition (\ref{eqn:Derivative Condition}) can be made uniform in a $C^1$ of $f$ and $g$ in $\mathcal{U}\times\mathcal{U}$. By the Arzel\`a-Ascoli argument, this gives a uniform bound for all $f,g\in\mathcal{U}$.This establishes uniform bounds on the $C^1$ norm of $h_{N,\sigma}|_{A^u_f}$. However, to establish $C^2$ bounds on $h_{N,\sigma}|_{A^u_f}$, we will need to show that the $C^1$ norm of the reparameterizations $\sigma_i$ can be made small. We will prove the following:
    \begin{sublemma}
        \label{lem: C1 size of reparam}
        There exist constants $C>0$, $0<\gamma<1$ such that $|\sigma_i|_{C^1}\leq C\gamma^N$.
    \end{sublemma}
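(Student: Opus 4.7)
The reparameterizations $\sigma_i$ are chosen so that the endpoint derivatives of $h_{N,\sigma_i}$ match the values prescribed by (\ref{eqn:Derivative Condition}). By the chain rule this forces, at each endpoint $x^*\in\{x_0,x_0',x_1',x_1\}$, the identity
$\sigma_i'(x^*)=D_u^{\mathrm{target}}h_N(x^*)/D_u^{\mathrm{naive}}h_N(x^*)$, where ``naive'' refers to the derivative computed via (\ref{eqn: Formula for h_N'}) and ``target'' refers to the right-hand side of (\ref{eqn:Derivative Condition}). The sublemma thus reduces to showing that this ratio differs from $1$ by at most $C\gamma^N$, since Lemma \ref{lem:reparam} then provides $\sigma_i$ realizing those endpoint derivatives while remaining $C\gamma^N$-close to the identity in $C^1$.

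To estimate the ratio, I would expand both numerator and denominator as infinite products of Jacobian ratios, using (\ref{eqn: Conditional Density Normalized to 1 at a point Formula}) for the conditional SRB densities and (\ref{eqn:Derivative of Holonomy Formula}) for the holonomy derivatives. After taking logarithms and reorganizing, the log-ratio becomes an absolutely convergent sum of differences
$\log D_u f(f^k(x^*))-\log D_u g(g^k(h(x^*)))$
indexed along the forward and backward orbits of $x^*$ and its holonomy images. Were $x^*$ a genuine $f^N$-periodic point at which Jacobian periodic data matched, the first $N$ such terms would cancel by the hypothesis, and the tail past index $N$ would be bounded by
$|\log D_u f|_{\lip}\sum_{k\geq N}C\gamma_0^k\leq C'\gamma_0^N$
for some $\gamma_0<1$ assembled from the hyperbolicity constants of (\ref{eqn:Anosov Condition}).

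Since $x_0',x_1',x_1$ are not themselves $N$-periodic, I would transfer the cancellation via a closing/shadowing argument: each of these points lies on the stable manifold of the fixed point $x_0$ of $f$, so the Anosov Closing Lemma produces $N$-periodic points of $f$ whose orbits exponentially shadow those of $x^*$ for $N$ iterates. The Lipschitz continuity of $\log D_u f$ and $\log D_u g$ (uniformly in $\mathcal{U}$) combined with Lemma \ref{lem:Uniform Holder} applied to $h$ then yields
$|\log(D_u^{\mathrm{target}}/D_u^{\mathrm{naive}})|\leq C\gamma^N$
for some $\gamma<1$ depending only on $\mathcal{U}$. The same mechanism applied to the second derivative of $h_{N,\sigma_i}$ controls the prescribed second derivative at the endpoints, so Lemma \ref{lem:reparam} ultimately supplies $\sigma_i$ with $|\sigma_i|_{C^1}\leq C\gamma^N$ in the sense of distance from the identity.

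The main obstacle is ensuring that $\gamma<1$ uniformly on $\mathcal{U}$. Because periodic orbits of $f$ are paired with periodic orbits of $g$ only through the H\"older conjugacy $h$, converting shadowing distances across $h$ costs a factor of $\alpha_0$, so the final rate must be tracked through Lemma \ref{lem:Uniform Holder} and the hyperbolicity constants (\ref{eqn:Anosov Condition}) to verify that $\gamma<1$ uniformly in $f,g\in\mathcal{U}$; the Arzel\`a-Ascoli compactness argument of Section \ref{sec: The Arzela-Ascoli Argument} then delivers uniform constants $C$ and $\gamma$.
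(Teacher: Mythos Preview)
Your approach is essentially the paper's: reduce to showing an infinite product of Jacobian ratios is exponentially close to $1$, take logarithms to get a Birkhoff-type sum of $\varphi=\log D_uf-\log D_ug\circ h$ along the orbit of the endpoint, shadow by an $N$-periodic orbit so that the middle block cancels by the periodic data hypothesis, and bound the tails by hyperbolicity together with the uniform H\"older norm of $\varphi$.

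Two small remarks. First, the shadowing step needs that $x_0',x_1',x_1$ are \emph{homoclinic} to $x_0$, i.e., lie on $W^s_f(x_0)\cap W^u_f(x_0)$ (which they do by construction), not merely on the stable manifold; this is what makes the symmetric orbit segment $\{f^i(x_0')\}_{|i|\le (N-1)/2}$ a $C\gamma_0^N$-pseudo-orbit, since both endpoints are then exponentially close to $x_0$. Second, the paper additionally invokes Theorem~\ref{Thm:Effective Equidistribution} to show the ratio of normalization integrals $\int_{h(x_0)}^{h(x_0')}d\mu^u_{g}\big/\int_{x_0}^{x_0'}d\mu^u_{f}$ tends to $1$ exponentially; in your formulation these integrals cancel in the quotient $D_u^{\mathrm{target}}/D_u^{\mathrm{naive}}$, so your route is marginally more self-contained on that point.
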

    \begin{proof}
        For concreteness, we focus on $\sigma_1$. First observe that 
        \begin{equation*}
            D_uh_N(x_0)=\frac{\int_{h(x_0)}^{h(x_0')}d\mu^u_{g,h(x_0)}}{\int_{x_0}^{x_0'}d\mu^u_{f,x_0}}.
        \end{equation*}
           Since the integral $\int_{x_0}^{x_0'}d\mu^u_{f,x_0}$ is uniformly bounded from below, this derivative will converge to $1$ exponentially as $N$ tends to infinity by the same effective equidistribution argument we will present in Section \ref{sec: Proof the Main Theorem the C^0 Estimates} (see Theorem \ref{Thm:Effective Equidistribution}). By condition (\ref{eqn:Derivative Condition}), we must have
           \begin{equation*}
               D_uh_N(x_0)=D_u\hol^{s,g}_{h(x_0'),h(x_0)}(h(x_0))D_uh_N(x_0')D_u\hol^{s,f}_{x_0,x_0'}(x_0)=
           \end{equation*}
           \begin{equation*}
               \frac{\int_{h(x_0)}^{h(x_0')}d\mu^u_{g,h(x_0)}}{\int_{x_0}^{x_0'}d\mu^u_{f,x_0}}\prod_{i=-\infty}^\infty \frac{D_uf(f^i(x_0)}{D_ug(h(f^i(x_0)))}\frac{D_ug(h(f^i(x_0')))}{D_uf(f^i(x_0')}
           \end{equation*}
            Therefore in order to show that $|\sigma_i|_{C^1}\leq C\gamma^N$ can be made $C^1$ small it suffices to show that
            \begin{equation}
            \label{eqn: Infinite product Converges Exponential Speed}
                \left|\prod_{i=-\infty}^\infty \frac{D_uf(f^i(x_0)}{D_ug(h(f^i(x_0)))}\frac{D_ug(h(f^i(x_0')))}{D_uf(f^i(x_0')} -1\right|\leq C\gamma^N
            \end{equation}
           Let $\varphi=\log D_uf-\log D_ug\circ h$. Then since $x_0$ is a fixed point of $f$, we have by the assumption on the periodic data that $\varphi(x_0)=0$. Taking logarithms, it then suffices to prove that
        \begin{equation*}
        \sum_{i=-\infty}^\infty\varphi(f^i(x_0'))\leq C\gamma^N.
        \end{equation*}
            Suppose for convenience that $N$ is odd, and consider the orbit segment of length $N$
        \begin{equation*}
            \{f^i(x_0')\}_{-\frac{N-1}{2}}^{\frac{N-1}{2}}.
        \end{equation*}
        By hyperbolicity, we have 
        \begin{equation*}
            d(x_0,f^{\frac{N-1}{2}}(x_0'))=d(f^{\frac{N-1}{2}}(x_0),f^{\frac{N-1}{2}}(x_0'))\leq d^u_f(f^{\frac{N-1}{2}}(x_0),f^{\frac{N-1}{2}}(x_0'))\leq C_{\mathcal{U}}\mu_1^{\frac{N-1}{2}}d^u_f(x_0,x_0')\leq C_1\gamma_0^N,
        \end{equation*}
        and likewise
        \begin{equation}
            d(x_0,f^{-\frac{N-1}{2}}(x_0'))\leq C_1\gamma_0^N
        \end{equation}
        for some uniform $C_1>0$ and $0<\gamma_0<1$. Therefore, $ \{f^i(x_0')\}_{-\frac{N-1}{2}}^{\frac{N-1}{2}}$ is a $2C_1\gamma_0^N$ pseudo-orbit, so for sufficiently large $N$ to apply shadowing, we have a point $x_N\in\fix(f^N)$ and a (uniform) constant $C_2>0$ such that for every $i=-\frac{N-1}{2},\cdots,\frac{N-1}{2}$, we have
        \begin{equation*}
            d(f^i(x_0'),f^i(x_N))\leq C_2C_1\gamma_0^N.
        \end{equation*}
        We now write
        \begin{equation*}
            \sum_{i=-\infty}^\infty\varphi(f^i(x_0'))=
            \sum_{i=-\frac{N-1}{2}}^{\frac{N-1}{2}}\varphi(f^i(x_0'))+
            \sum_{|i|>\frac{N-1}{2}}\varphi(f^i(x_0'))=
        \end{equation*}
        \begin{equation*}
             \sum_{i=-\frac{N-1}{2}}^{\frac{N-1}{2}}[\varphi(f^i(x_0'))-\varphi(f^i(x_N))]+ \sum_{i=-\frac{N-1}{2}}^{\frac{N-1}{2}}\varphi(f^i(x_N))+ \sum_{|i|>\frac{N-1}{2}}[\varphi(f^i(x_0'))-\varphi(f^i(x_0))].
        \end{equation*}
We estimate these three sums separately.

For the first sum, since $\varphi$ is $\alpha$-H\"older with $|\varphi|_\alpha$ uniformly bounded, we get by shadowing
\begin{equation*}
    \sum_{i=-\frac{N-1}{2}}^{\frac{N-1}{2}}[\varphi(f^i(x_0'))-\varphi(f^i(x_N))]\leq |\varphi|_\alpha \sum_{i=-\frac{N-1}{2}}^{\frac{N-1}{2}}d(f^i(x_0'),f^i(x_N))^\alpha\leq
\end{equation*}
\begin{equation*}
    |\varphi|_\alpha \sum_{i=-\frac{N-1}{2}}^{\frac{N-1}{2}}(C_2C_1)^\alpha\gamma_0^{\alpha N}= |\varphi|_\alpha (C_2C_1)^\alpha N\gamma_0^{\alpha N}\leq C_3\gamma^N,
\end{equation*}
where $\gamma_0^\alpha<\gamma<1$.
The second sum is identically equal to $0$ by the periodic data assumption. Finally, for the third sum, notice that the points $f^i(x_0)$ and $f^i(x_0')$ become exponentially close as $|i|\rightarrow\infty$. Considering (for simplicity of notation) forward iterates, we have
\begin{equation*}
    \sum_{i>\frac{N-1}{2}}[\varphi(f^i(x_0'))-\varphi(f^i(x_0))]\leq
    |\varphi|_\alpha\sum_{i>\frac{N-1}{2}}d(f^i(x_0'),f^i(x_0))=
\end{equation*}
\begin{equation*}
     |\varphi|_\alpha\sum_{i=1}^\infty d(f^{i+\frac{N-1}{2}}(x_0'),f^{i+\frac{N-1}{2}}(x_0))^\alpha\leq 
     C|\varphi|_\alpha\gamma_0^{\alpha N}\sum_{i=1}^\infty d(f^i(x_0'),f^i(x_0))^\alpha\leq C_4\gamma_0^{\alpha N}<C_4\gamma^N.
\end{equation*}
Combining all constants proves the sublemma.
    \end{proof}
    
    We now choose the the $C^0$-size of our reparameterization to be $\gamma^{2N}$-small; that is, $|\sigma|_{C^0}\leq \gamma^{2N}$. Together with Sublemma \ref{lem: C1 size of reparam}, this gives us $|\sigma_i|_{C^2}\leq C$ for some uniform $C>0$, or $|\sigma_i|_{C^{1+\alpha}}\leq C_\alpha$ for any $0<\alpha<1$ with $C_\alpha>0$ depending only on $\alpha$.
    
    It remains to uniformly bound the H\"older seminorm of the derivative of $h_N|{A^u_{f,i}}$ (without reparameterization). This will follow from showing that the second derivative on each $A^u_{f,i}$ can be made uniformly bounded.
    To establish $C^2$ bounds on $h_N$, it suffices to obtain $C^1$ upper bounds on $\omega^u_f$ and $\omega^u_g$ and use the quotient rule along with (\ref{eqn: Formula for h_N'}). It is enough for us to bound $\omega^u_f(x,x_0)$. Formally differentiating $\log\omega^u_f(x,x_0)$, we get
    \begin{equation}
        \label{eqn: log derivative of density}
        \frac{D_u\omega^u_f(x,x_0)}{\omega^u_f(x,x_0)}=
        \sum_{i=1}^\infty\frac{D^2_uf(f^{-n}(x))}{D_uf(f^{-n}(x))}D_uf^{-n}(x)
    \end{equation}
    Since $|D_uf^{-n}(x)|\leq \mu_1^{-n}$, 
    $$
    \left|\sum_{i=1}^\infty\frac{D^2_uf(f^{-n}(x))}{D_uf(f^{-n}(x))}D_uf^{-n}(x)\right|\leq
    \sum_{i=1}^\infty\frac{|D^2_uf(f^{-n}(x))|}{|D_uf(f^{-n}(x))|}|D_uf^{-n}(x)|\leq
    $$
    $$
    \sum_{i=1}^\infty\frac{|f|_{C^2}}{\mu_1}\mu_1^{-n}<\frac{|f|_{C^2}}{1-\mu_1}<\infty.
    $$
    By the Weierstrass M-test, the series converges uniformly to the derivative of $\log\omega^u_f(x,x_0)$, and moreover the bound on the logarithmic derivative of $\omega^u_f(x,x_0)$ is uniform in $f\in\mathcal{U}$. Together with our bound (\ref{eqn:Uniform Bound on Density}), this gives the desired bound on $D_u\omega^u_f$. This gives uniform $C^2$ bounds on $h_{N,\sigma}|_{A^u_f}$, which consequentially gives uniform $C^{1+\alpha}$ bounds for all $0<\alpha<1$.   
\end{proof}

\begin{proof}[Proof of Lemma \ref{lem:Uniform bound for unstable derivative first step}]
    Fix a point $x\in\mathbb{T}^2$. Since $\rho$ is constant along unstable manifolds we have
    $$
    D_uh_N(x)=\rho(h(x))D_u\hol^{s,g}_{h(y(x)),h(x)}(h_N(y(x)))D_uh_N|_{A^u_f}(y(x))D_uy(x)+$$
    $$
    (1-\rho(h(x)))D_u\hol^{s,g}_{h(z(x)),h(x)}(h_N(z(x)))D_uh_N|_{A^u_f}(z(x))D_uz(x).
    $$
    By Lemma \ref{lem: Uniformity along A^u_f}, $|D_uh_N|_{A^u_f}|_{C^{1+\alpha}}$ is uniformly bounded for all $0<\alpha<1$. Moreover, we established from (\ref{eqn:Derivative of Holonomy Formula}) that $D_u\hol^{s,f}_{a,b}$ is uniformly bounded, so long as $d^s_f(x,\hol^{s,f}_{a,b}(x))$ stays bounded. In other words, the maximum distance between a point $x\in\mathbb{T}^2$ and either $y(x)$ and $z(x)$ must be bounded. This is clearly bounded for a fixed $f$, and is true $C^1$ close to $f$ by structural stability and Lemma \ref{lem:Uniform Holder}. Thus by the Arzel\`a-Ascoli argument, it is true for all $f\in\mathcal{U}$. 

    Next we show that, restricted to $W^u_{f,loc}(x)$, the $\alpha$-H\"older seminorm of $D_u\hol^{s,f}_{a,b}$ is uniformly bounded for any $0<\alpha<1$. Note that it is sufficient to prove that $D_u\hol^{s,f}_{a,b}$ is locally H\"older. To be precise, suppose that $x\not\in [x_0,x_1]_s$ and let $\delta>0$ be small enough so that $W^u_{f,\delta}(x)$ does not intersect $[x_0,x_1]_s$. Let $x'\in W^u_{f,\delta}(x)$. Then we write
    $$
    \log(D_u\hol^{s,f}_{a,b}(x))-\log(D_u\hol^{s,f}_{a,b}(x'))
    =\sum_{i=0}^{M-1}\left(\log(D_uf(f^i(x)))-\log(D_uf(f^i(x'))) \right)
    $$
    $$
    +\sum_{i=0}^{M-1}\left(\log(D_uf(f^i(\hol^{s,f}_{a,b}(x))))-\log(D_uf(f^i(\hol^{s,f}_{a,b}(x)))) \right)
    $$
    $$+
    \sum_{i=M}^{\infty}\left((\log(D_uf(f^i(x)))-\log(D_uf(f^i(\hol^{s,f}_{a,b}(x)))) \right)
    $$
    \begin{equation}
          \label{eqn: Derivative of Holonomy is Holder Calculation}
    +\sum_{i=M}^{\infty}\left((\log(D_uf(f^i(x')))-\log(D_uf(f^i(\hol^{s,f}_{a,b}(x')))) \right),
    \end{equation}
    where $M:=M(d^u_f(x,x'))$ will chosen to optimize the two different type of terms in (\ref{eqn: Derivative of Holonomy is Holder Calculation}). Since $x$ and $x'$ lie on the same unstable manifold, the distance between them is expanded under forward iterates of $f$. We estimate the first and second sums in (\ref{eqn: Derivative of Holonomy is Holder Calculation}) as follows:
    $$
    \sum_{i=0}^{M-1}\left|\log(D_uf(f^i(x)))-\log(D_uf(f^i(x'))) \right|\leq \sum_{i=0}^{M-1}\frac{1}{\mu_1}d^u_f(f^i(x),f^i(x'))\leq 
    $$
    $$
    \sum_{i=0}^{M-1}\frac{1}{\mu_1}\nu_1^id^u_f(x,x')= \frac{\nu^M_1-1}{\mu_1(\nu_1-1)}d^u_f(x,x')<C\nu_1^Md^u_f(x,x').
    $$
    We estimate the third and fourth sums similarly:
    $$
    \sum_{i=M}^{\infty}\left|(\log(D_uf(f^i(x)))-\log(D_uf(f^i(\hol^{s,f}_{a,b}(x)))) \right|\leq
    \sum_{i=M}^\infty \frac{1}{\mu_1}\nu_0^id^s_f(x,\hol^{s,f}_{a,b}(x))=$$
    $$
    \frac{1}{\mu_1}\nu_0^Md^s_f(x,\hol^{s,f}_{a,b}(x))\sum_{i-0}^\infty\nu_0^i<C\nu_0^M.
    $$
    We may therefore estimate (\ref{eqn: Derivative of Holonomy is Holder Calculation}) as
    \begin{equation}
        \label{eqn:Choosing M to optimize Estimate}
        C(\nu_0^M+\nu_1^Md^u_f(x,x')),
    \end{equation}
    where $C>0$ is uniform. Let $0<\beta\leq 1$ be such that $\nu_0^\beta\leq \nu_1^{-1}$. Then for a given $0<\alpha<1$, we choose $M\geq 0$ to be minimum such that
    $$
    \nu_1^M\leq d^u_f(x,x')^{-\alpha}<\nu_1^{M+1},
    $$
    if such $M\in\mathbb{N}$ exists, and $M=0$ otherwise. Then we estimate
    $$
    C(\nu_0^M+\nu_1^Md^u_f(x,x'))\leq C(d^u_f(x,x')^{\alpha\beta}+d^u_f(x,x')^{1-\alpha})\leq 2Cd^u_f(x,x')^{\min\{\alpha\beta,1-\alpha\}},
    $$
    as desired. It remains to show that $D_uh_N$ is H\"older continuous when restricted to stable manifolds. Let $D_1(x)$ and $D_2(x)$ be as in (\ref{eqn:Expression for Derivative from below}). Then for $x'\in W^s_{f,\delta}(x)$ we have
    $$
    |D_uh_N(x)-D_uh_N(x')|=
    |\rho(h(x))D_1(x)+(1-\rho(x))D_2(x)-\rho(h(x'))D_1(x')-(1-\rho(x'))D_2(x')|
    $$
    $$
    \leq \rho(h(x))|D_1(x)-D_1(x')|+|D_1(x')||\rho(h(x))-\rho(h(x'))|+
    $$
    \begin{equation}
    \label{eqn:Transverse regularity First Step of Construction}
         (1-\rho(h(x)))|D_2(x)-D_2(x')|+|D_2(x')||\rho(h(x))-\rho(h(x'))|.
    \end{equation}    
    Since the lengths of the stable manifolds connecting points $x\in\mathbb{T}^2/A^u_f$ to $A^u_f$ is bounded below for $f\in\mathcal{U}$, $\rho$ can be chosen to have uniformly bounded $C^{1+\alpha}$ norm, and hence $\rho\circ h$ has uniformly bounded $\alpha_0$-seminorm. Since $D_i$ is uniformly bounded, it remains to estimate $|D_1(x)-D_1(x')|$. Since $x$ and $x'$ are on the same stable manifold, $y(x)=y(x')$. Then by (\ref{eqn:Derivative of Holonomy Formula}), we have
    $$
    |\log D_u\hol^{s,f}_{a,b}(x)-\log D_u\hol^{s,f}_{a',b}(x')|\leq \sum_{i=0}^\infty |\log D_uf(f^i(x))-\log D_uf(f^i(x'))|$$
    $$\leq \frac{1}{\mu_1}d^s_f(x,x')\sum_{i=0}^\infty \nu_0^i=\frac{1}{\mu_1(1-\nu_0)}d^s_f(x,x').
    $$
    Putting this together the definitions of the $D_i$ (\ref{eqn:Expression for Derivative from above}) and (\ref{eqn:Transverse regularity First Step of Construction}), we have
    $$
    |D_uh_N(x)-D_uh_N(x')|\leq Cd^s_f(x,x')^{\alpha_0},
    $$
    with $C>0$ uniform.
\end{proof}

\begin{proof}[Proof of Lemma \ref{lem:s-diff of second step}]
    The proof is largely analogous to that of Lemma \ref{lem:Uniform bound for unstable derivative first step} except we need to establish uniform $C^{1+\alpha}$ bounds on $\Tilde{\hol}^{u,g}$. To be precise, it suffices to prove that for $x'\in \mathbb{T}^2/A^s_f$
    \begin{equation}
    \label{eqn: Regularity of Fake Unstable Holonomy}
    \Tilde{\hol}^{u,g}_{h(\overline{y}(x')),h_N(x')}:A^s_g\rightarrow \Tilde{W}^s_g(h_N(x'))   
    \end{equation}    
    is uniformly $C^{1+\alpha}$. However, this is in essence the content of Lemma \ref{lem: Fake Stable Foliation}. Indeed, by Lemma \ref{lem: Fake Stable Foliation}, we can view $\Tilde{W}^s_g(h_N(x'))$ as the graph of the $C^{1+\alpha}$ function $\rho$ over some stable manifold of $g$, which, after a composition with a $C^{1+\alpha}$ unstable holonomy $\hol^{u,g}$, we may assume is $A^u_g$. By analogous computations as in Lemma \ref{lem:Uniform bound for unstable derivative first step}, we have that the unstable holonomy $\hol^{u,g}$ is uniformly $C^{1+\alpha}$, and since the likewise we have that the $\rho$ have uniformly bounded $C^{1+\alpha}$ norm, we have that (\ref{eqn: Regularity of Fake Unstable Holonomy}) is uniformly $C^{1+\alpha}_s$. Since $\overline{\rho}$ is constant along leaves of $\mathcal{F}^{s,g}$, it follows that $\overline{h}_N|_{W^s_f(x)}$ is uniformly $C^{1+\alpha}$ for every stable leaf of $f$. 
\end{proof}

\begin{proof}[Proof of Lemma \ref{lem: u-diff Second Step}]
    We first observe that by the same proof as Lemma \ref{lem: Fake Stable Foliation}, $\overline{h}_N$ sends the unstable foliation $\mathcal{F}^{u,f}$ to a $C^{1+\alpha}$-foliation consisting of $C^2$ leaves which we denote by $\Tilde{\mathcal{F}}^{u,g}$. While $h_N$ traces out the $C^2$ leaves of $\Tilde{\mathcal{F}}^{s,g}$ with a H\"older parameterization, $\overline{h}_N$ traces out the leaves of $\Tilde{\mathcal{F}}^{u,g}$ with a $C^{1+\alpha}$ parameterization since $h_N\in C^{1+\alpha}_u$ by Lemma \ref{lem:Uniform bound for unstable derivative first step}. Since $h_N$ is uniformly $C^{1+\alpha}$ and $\overline{\rho}$ can be chosen uniformly $C^{1+\alpha}$ for all $f,g\in\mathcal{U}$, it follows that $\overline{h}_N|_{W^u_f(x)}$ is uniformly $C^{1+\alpha}$ for every unstable leaf of $f$.
\end{proof}

\section{Proof of Main Theorem: The $C^0$ Estimates}
\label{sec: Proof the Main Theorem the C^0 Estimates}
The main goal of this section is to prove the following part of Theorem \ref{Thm:Main Theorem}:
\begin{theorem}
\label{thm: C^0 Estimate Theorem}
    There exists constants $C_0>0$, $0<\lambda_0<1$ such that for any $f,g\in\mathcal{U}$, and any conjugacy $h$ in the homotopy class of the identity and $N\in\mathbb{N}$ as in Theorem \ref{Thm:Main Theorem}, 
    $$
    d_{C^0}(h,\overline{h}_N)<C_0\lambda_0^N,
    $$
    where $\overline{h}_N$ is the conjugacy constructed in Section 3.
\end{theorem}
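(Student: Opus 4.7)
The plan is to apply the triangle inequality
$$d_{C^0}(h, \overline{h}_N) \leq d_{C^0}(h, h_N) + d_{C^0}(h_N, \overline{h}_N)$$
and bound each term separately. Since $\overline{h}_N$ is obtained from $h_N$ by repeating in the stable direction the construction used to pass from $h$ to $h_N$ in the unstable direction, the two estimates are produced by the same argument with the roles of stable and unstable interchanged. I therefore focus on bounding $d_{C^0}(h, h_N)$.

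Mirroring the two-stage structure of the construction, this estimate splits into a bound on $A^u_f$ and an extension off $A^u_f$. On each of the subintervals $A^u_{f,i}$, the uniform lower bound on the density $\omega^u_g$ (established in the proof of Lemma \ref{lem: Uniformity along A^u_f}) yields
$$|h_N(x) - h(x)| \leq C \bigl|I^u_{f, x_0}(x) - I^u_{g, h(x_0)}(h(x))\bigr|.$$
Expanding the integrals via the infinite-product formula (\ref{eqn: Conditional Density Normalized to 1 at a point Formula}) and using the intertwining $g \circ h = h \circ f$, the right-hand side can be rewritten in terms of Birkhoff-type sums of the H\"older potential $\varphi := \log D_u f - \log D_u g \circ h$. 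The hypothesis on finite periodic data matching translates precisely to $\sum_{i=0}^{N-1} \varphi(f^i(p)) = 0$ for every $p \in \fix(f^N)$.

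To extract an exponential rate from this, I invoke Theorem \ref{Thm:Effective Equidistribution} together with a shadowing argument closely modeled on Sublemma \ref{lem: C1 size of reparam}: any long orbit segment is shadowed by a period-$N$ orbit (on which the Birkhoff sum of $\varphi$ vanishes) to within exponentially small error, while H\"older continuity of $\varphi$ (uniform by Lemma \ref{lem:Uniform Holder}) and hyperbolicity control both the shadowing errors and the discarded tails. This yields $\sup_{x \in A^u_f}|h_N(x) - h(x)| \leq C_1 \lambda_1^N$ with uniform constants. The extension off $A^u_f$ proceeds from the weighted-holonomy definition (\ref{def:h_N}) of $h_N$ together with the $h$-equivariance identity $h(x) = \hol^{s,g}_{h(w), h(x)}(h(w))$ for $w \in \{y(x), z(x)\}$; subtracting, $h_N(x) - h(x)$ becomes a convex combination of stable-holonomy images of the differences $h_N(y(x)) - h(y(x))$ and $h_N(z(x)) - h(z(x))$. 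Uniform Lipschitz (indeed $C^{1+Zyg}$) regularity of the stable holonomies (Section \ref{sec: Regularity of the Stable and Unstable Foliations}) then propagates the bound, giving $d_{C^0}(h, h_N) \leq C_2 \lambda_1^N$. The symmetric stable-direction argument controls $d_{C^0}(h_N, \overline{h}_N)$, using the fake stable foliation $\Tilde{\mathcal{F}}^{s,g}$ from Lemma \ref{lem: Fake Stable Foliation} in place of $\mathcal{F}^{s,g}$.

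The principal obstacle lies in controlling the $\varphi$-sums uniformly: Theorem \ref{Thm:Effective Equidistribution} furnishes exponential equidistribution for Lipschitz observables averaged over $\fix(f^N)$, whereas what is needed here is a pointwise bound on an infinite tail sum of a merely H\"older $\varphi$. The shadowing argument bridges this gap at the price of an $\alpha$-power loss in the rate, which is absorbed by adjusting $\lambda_0$. A secondary obstacle is uniformity across $\mathcal{U}$: the hyperbolicity rates (\ref{eqn:Anosov Condition}), the H\"older exponent of $\varphi$, the SRB density bounds of Lemma \ref{lem:Uniform bounds on area Density}, the stable-holonomy constants, and the constants implicit in Theorem \ref{Thm:Effective Equidistribution} must all be chosen uniformly, which is arranged through the Arzel\`a-Ascoli compactness argument of Section \ref{sec: The Arzela-Ascoli Argument}.
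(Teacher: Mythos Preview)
Your overall architecture matches the paper: the triangle inequality splitting $d_{C^0}(h,\overline{h}_N)\le d_{C^0}(h,h_N)+d_{C^0}(h_N,\overline{h}_N)$, the reduction to points $x\in A^u_f$, and the extension off $A^u_f$ via the weighted-holonomy formula and Lipschitz holonomies are all exactly what the paper does. The symmetry argument for the second term is also the same.

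The gap is in the core estimate on $A^u_f$. You assert that $|I^u_{f,x_0}(x)-I^u_{g,h(x_0)}(h(x))|$ ``can be rewritten in terms of Birkhoff-type sums of $\varphi=\log D_uf-\log D_ug\circ h$,'' and then handled by shadowing as in Sublemma~\ref{lem: C1 size of reparam}. This step does not go through as stated. The quantities $I^u_{f,x_0}(x)$ and $I^u_{g,h(x_0)}(h(x))$ are integrals of densities over \emph{different} curves with \emph{different} arc-length measures, and one cannot change variables by $h$ because $h$ is only H\"older. What the product formula gives you is control on the \emph{ratio of densities} $\omega^u_f(y,x_0)/\omega^u_g(h(y),h(x_0))$ in terms of backward sums of $\varphi$; it does not convert the difference of the two integrals into a single Birkhoff sum along an orbit. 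The shadowing argument of Sublemma~\ref{lem: C1 size of reparam} is tailored to the heteroclinic point $x_0'$ (two-sided exponential approach to the fixed point), and does not obviously extend to give pointwise control for arbitrary $x\in A^u_{f,1}$, let alone control of an integral over $[x_0,x]_u$.

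The paper's route is genuinely different and avoids this obstruction. It passes from conditional measures to the full SRB measures over thin rectangles $A^u_{f,1}(x,\varepsilon)$ (Theorem~\ref{Thm:Effective Estimate of Conditional Measures}), uses the matching $\mu_f^N=h^*\mu_g^N$ of the discrete periodic measures to compare $\mu_f$ and $\mu_g$ on corresponding rectangles, and applies Theorem~\ref{Thm:Effective Equidistribution} after approximating indicator functions by Lipschitz bumps (Sublemma~\ref{lem: Estimate Characteristic Function by Lipschitz Functions}). The area-preserving hypothesis enters crucially here --- in Sublemma~\ref{lem: Comparing area to Length} to compare $\mu_f(A^u_{f,1}(x_0',\varepsilon))$ with $\varepsilon$, and in Sublemma~\ref{lem: Estimate Characteristic Function by Lipschitz Functions} to bound $\mu_f(\Omega^f_N)$ --- and your proposal does not indicate where this hypothesis would be used.
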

The first step will be to break up estimate using the intermediate conjugacy $h_N$ constructed in \ref{def:h_N}:
\begin{equation}
\label{eqn:First Reduction Estimate}
     d_{C^0}(h,\overline{h}_N)\leq  d_{C^0}(h,h_N)+ d_{C^0}(h_N,\overline{h}_N).
\end{equation}
The two terms on the right side of (\ref{eqn:First Reduction Estimate}) will be handled identically and for concreteness we will focus on explaining how to estimate the first term.

To estimate  $d_{C^0}(h,h_N)$, we will begin by showing how to reduce to estimating the pointwise distance $d(h(x),h_N(x))<C_0\lambda_0^N$ for $x\in A^u_f$. The main goal of this section will be to prove the following lemma:
\begin{lemma}
\label{lem:First Step Estimate}
    There exists constants $C_0>0$, $0<\lambda_0<1$ such that for any $f,g\in\mathcal{U}$, and any conjugacy $h$ in the homotopy class of the identity and $N\in\mathbb{N}$ as in Theorem \ref{Thm:Main Theorem}, 
    $$
    d(h(x),h_N(x))<C_0\lambda_0^N,
    $$
    for $x\in A^u_f$, where $h_N$ is the conjugacy constructed in Section 3.
\end{lemma}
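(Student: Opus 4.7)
The plan is to reduce the pointwise bound $d(h(x),h_N(x))\leq C_0\lambda_0^N$ on $A^u_f$ to a $C^0$ comparison between the two normalized conditional measures $\mu^u_{f,x_0}/\mu^u_{f,x_0}(A^u_{f,1})$ and $\mu^u_{g,h(x_0)}/\mu^u_{g,h(x_0)}(A^u_{g,1})$ (under pushforward by $h$), and then extract this comparison from the effective equidistribution theorem (Theorem \ref{Thm:Effective Equidistribution}) applied to Lipschitz test functions supported in thin tubes around the relevant unstable arcs.

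First reduction. Fix $x\in A^u_{f,1}$ (the pieces $A^u_{f,2},A^u_{f,3}$ are treated identically, and the $C^0$-small reparameterizations $\sigma_i$ of Sublemma \ref{lem: C1 size of reparam} contribute a negligible $O(\gamma^{2N})$ perturbation that is absorbed into the final bound). Since the density of $\mu^u_{g,h(x_0)}$ on $A^u_{g,1}$ is uniformly bounded above and below by the arguments of Lemma \ref{lem: Uniformity along A^u_f}, and since by construction of $h_N$
\begin{equation*}
\frac{\mu^u_{g,h(x_0)}([h(x_0),h_N(x)]_u)}{\mu^u_{g,h(x_0)}(A^u_{g,1})}=\frac{\mu^u_{f,x_0}([x_0,x]_u)}{\mu^u_{f,x_0}(A^u_{f,1})},
\end{equation*}
it suffices to bound the CDF-difference
\begin{equation*}
\Delta(x):=\left|\frac{\mu^u_{g,h(x_0)}([h(x_0),h(x)]_u)}{\mu^u_{g,h(x_0)}(A^u_{g,1})}-\frac{\mu^u_{f,x_0}([x_0,x]_u)}{\mu^u_{f,x_0}(A^u_{f,1})}\right|.
\end{equation*}

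Thin-tube translation and effective equidistribution. Fix $\delta>0$ and let $I^s\subset W^s_g(h(x_0))$ be the stable interval of length $\delta$ centered at $h(x_0)$; for $y\in A^u_{g,1}$, let $R_y:=[h(x_0),y]_u\times I^s$. Since $h$ preserves both foliations, $h^{-1}(R_y)$ is the $f$-rectangle $[x_0,h^{-1}(y)]_u\times h^{-1}(I^s)$, whose stable side is $\alpha_0$-H\"older close to a genuine $f$-stable interval by Lemma \ref{lem:Uniform Holder}. Disintegrating $\mu_g$ and $\mu_f$ via Rokhlin's theorem and using the uniform Lipschitz bound on $\Psi$ from Section \ref{sec: SRB Measure and area} gives
\begin{equation*}
\mu_g(R_y)=\hat{\mu}_g(I^s)\,\mu^u_{g,h(x_0)}([h(x_0),y]_u)\,(1+O(\delta)),
\end{equation*}
\begin{equation*}
\mu_f(h^{-1}(R_y))=\hat{\mu}_f(h^{-1}(I^s))\,\mu^u_{f,x_0}([x_0,h^{-1}(y)]_u)\,(1+O(\delta^{\alpha_0})),
\end{equation*}
with uniform constants. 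Now mollify $\mathbf{1}_{R_y}$ to a Lipschitz $\psi_{y,\epsilon}$ on scale $\epsilon$, $\|\psi_{y,\epsilon}\|_{\lip}\leq C/\epsilon$. The crucial consequence of the periodic-data hypothesis is that $h$ restricts to a bijection $\fix(f^N)\to\fix(g^N)$ with $D_uf^N(p)=D_ug^N(h(p))$, hence the weighted periodic sums
\begin{equation*}
\sum_{p\in\fix(f^N)}\frac{\psi_{y,\epsilon}(h(p))}{D_uf^N(p)}\quad\text{and}\quad\sum_{q\in\fix(g^N)}\frac{\psi_{y,\epsilon}(q)}{D_ug^N(q)}
\end{equation*}
are identical. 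Applying Theorem \ref{Thm:Effective Equidistribution} on the $g$-side to $\psi_{y,\epsilon}$ and on the $f$-side to the $\alpha_0$-H\"older observable $\psi_{y,\epsilon}\circ h$ (using Lemma \ref{lem:Uniform Holder} for the uniform H\"older exponent) yields
\begin{equation*}
\bigl|\mu_g(R_y)-\mu_f(h^{-1}(R_y))\bigr|\leq C\bigl(\epsilon+\lambda^N/\epsilon^{\beta}\bigr)
\end{equation*}
for some uniform $\beta\geq 1$ depending only on $\alpha_0$ (absorbing the Lipschitz-to-H\"older passage).

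Assembly and optimization. Apply the previous step to $y=h(x)$ and to the right endpoint $y=h(x_0')$ of $A^u_{g,1}$; dividing the resulting identities cancels the unknown factor $\hat{\mu}_g(I^s)/\hat{\mu}_f(h^{-1}(I^s))$ up to additive error $O(\delta^{\alpha_0})$. Since $\hat{\mu}_g(I^s)\asymp\delta$ by Lemma \ref{lem:Uniform bounds on area Density} and $\mu^u_{g,h(x_0)}(A^u_{g,1})$ is uniformly bounded below, one obtains
\begin{equation*}
\Delta(x)\leq C\bigl(\delta^{\alpha_0}+\delta^{-1}\epsilon+\delta^{-1}\lambda^N/\epsilon^{\beta}\bigr),
\end{equation*}
and balancing the three terms (e.g., $\epsilon=\lambda^{N/(1+\beta)}$ and then $\delta=\lambda^{N/((1+\beta)(1+\alpha_0))}$) produces the required bound $\Delta(x)\leq C_0\lambda_0^N$ with $\lambda_0=\lambda^{\alpha_0/((1+\beta)(1+\alpha_0))}\in(0,1)$ uniform in $\mathcal{U}$. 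The main obstacle lies in this final optimization: the blow-up $\delta^{\alpha_0-1}$ of the normalization as $\delta\to0$ fights against the $\lambda^N/\epsilon^{\beta}$ term coming from the Lipschitz-to-H\"older loss in Theorem \ref{Thm:Effective Equidistribution}, so the final rate $\lambda_0$ is strictly worse than the equidistribution rate $\lambda$ and depends delicately on $\alpha_0$. Uniformity of $\alpha_0$ (Lemma \ref{lem:Uniform Holder}), of the area densities $\varphi_f$ (Lemma \ref{lem:Uniform bounds on area Density}), of the holonomy densities $\Psi$ (Section \ref{sec: SRB Measure and area}), and of the lengths of the $A^u_{f,i}$, $A^u_{g,i}$ (Lemma \ref{lem: Uniformity along A^u_f}) is exactly what is needed to ensure that $C_0$ and $\lambda_0$ depend only on $\mathcal{U}$.
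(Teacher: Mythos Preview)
Your overall architecture matches the paper's: reduce to a CDF difference on $A^u_{f,1}$, fatten the unstable arcs into thin $su$-rectangles, use local product structure to pass from conditional measures to SRB measures, and then invoke the effective equidistribution Theorem~\ref{Thm:Effective Equidistribution} together with the identity $h_*\mu_f^N=\mu_g^N$ coming from the period-$N$ data. The reparameterization remark and the role of Lemma~\ref{lem:Uniform bounds on area Density} are also exactly as in the paper.

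The one substantive divergence is how you couple the $f$-side and the $g$-side through $h$. You apply Theorem~\ref{Thm:Effective Equidistribution} on the $f$-side to the observable $\psi_{y,\epsilon}\circ h$, which is only $\alpha_0$-H\"older; but Theorem~\ref{Thm:Effective Equidistribution} is stated and proved only for Lipschitz test functions, so as written this step is unjustified (your parameter $\beta$ is doing unexplained work). The paper avoids this by never composing with $h$: since $h$ carries the rectangle $A^u_{f,1}(x,\varepsilon)$ onto $A^u_{g,1}(h(x),\tilde\varepsilon)$ and $h_*\mu_f^N=\mu_g^N$, one has $\int_{A^u_{f,1}(x,\varepsilon)}d\mu_f^N=\int_{A^u_{g,1}(h(x),\tilde\varepsilon)}d\mu_g^N$ exactly, and then Theorem~\ref{Thm:Effective Equidistribution} is applied \emph{separately} on each side to genuinely Lipschitz approximations $\varphi^t_{f,x,\varepsilon}$ and $\varphi^t_{g,h(x),\tilde\varepsilon}$ of the two indicator functions. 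The residual term $\bigl|\int\varphi^t d\mu_f^N-\int\chi\,d\mu_f^N\bigr|$, which you would have to control by measuring $\mu_f^N$ on a thin boundary collar (and which is awkward because $\mu_f^N$ is discrete), is killed in the paper by an intermediate value argument: the family $\varphi^t$ is built to be $C^0$-continuous in $t$ with $\varphi^0\le\chi\le\varphi^1$, so some $t_0$ makes this term vanish. This device is what lets the paper get away with the single scale $\varepsilon=\tau^{N/4}$ instead of your three-parameter optimization. Your route can be salvaged by first extending Theorem~\ref{Thm:Effective Equidistribution} to H\"older observables (this is routine from the symbolic Theorem~\ref{thm:EE for SFT}), but the paper's approach is cleaner and sidesteps the regularity loss entirely.
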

First, let us see how Theorem \ref{thm: C^0 Estimate Theorem} follows from Lemma \ref{lem:First Step Estimate}:
\begin{proof}[Proof of Theorem \ref{thm: C^0 Estimate Theorem} Assuming Lemma \ref{lem:First Step Estimate}]
    To begin, fix a point $x\in\mathbb{T}^2/A^u_f$, and observe that 
$d(h(x),h_N(x))\leq d^u_g(h(x),h_N(x))$, where $d_g^u$ denotes the induced distance in the leaf $W^u_g(h(x))$.
Now consider the two points $y,z\in A^u_f$ obtained by taking the stable holonomies of $x$ in either direction. Then by \ref{def:h_N}, $h_N(x)$ lies between points $\hol^{s,g}_{h(y),h(x)}(h_N(y))$ and $\hol^{s,g}_{h(z),h(x)}(h_N(z))$ in the leaf $W^u_g(h(x))$. 
Therefore, 
$$
d^u_g(h(x),h_N(x))\leq\max\{d_g^u(h(x),\hol^{s,g}_{h(y),h(x)}(h_N(y))),d_g^u(h(x),\hol^{s,g}_{h(z),h(x)}(h_N(z)))\}=$$
$$
\max\{d_g^u(\hol^{s,g}_{h(y),h(x)}(h(y)),\hol^{s,g}_{h(y),h(x)}(h_N(y))),d_g^u(\hol^{s,g}_{h(z),h(x)}(h(z)),\hol^{s,g}_{h(z),h(x)}(h_N(z)))\},
$$
 Since $\hol^{s,g}_{h(y),h(x)}$ and $\hol^{s,g}_{h(z),h(x)}$ are Lipschitz continuous functions with respect to the induced leaf-distances, we have
$$
\max\{d^u(\hol^{s,g}_{h(y),h(x)}(h(y)),\hol^{s,g}_{h(y),h(x)}(h_N(y))),d^u(\hol^{s,g}_{h(z),h(x)}(h(z)),\hol^{s,g}_{h(z),h(x)}(h_N(z)))\}\leq$$
$$
C_{Lip}\max\{d_g^u(h(y),h_N(y)),d_g^u(h(z),h_N(z))\}.
$$
It remains to bound the unstable distance $d_g^u$ on $A^u_g$ by the standard Riemannian distance $d$. To be precise, it will be sufficient to show that there exists $C^u_g>0$ such that $d_g^u(a,b)\leq C^u_gd(a,b)$ for all $a,b\in A^u_g$ such that $d(a,b)\leq \delta$, where $\delta>0$ comes from the local product structure constants in Theorem \ref{thm: Local Product Structure Uniform}. Then we have
$$
d(h(x),h_N(x))<C_{Lip}C^u_gC_0\lambda_0^N,
$$
whenever $N$ is sufficiently large that $C_0\lambda_0^N\leq \delta$ in Lemma \ref{lem:First Step Estimate} below. For small $N$, we have the trivial estimate 
$$d(h(x),h_N(x))\leq 2\diam(\mathbb{T}^2)=\frac{2\diam(\mathbb{T}^2)}{\lambda_0^{N}}\lambda_0^{N}.$$

\begin{sublemma}
\label{lem: Uniform change in leaf metric}
    The constants $C_{Lip},C^u_g>0$ can be made uniform in $g\in\mathcal{U}$.
\end{sublemma}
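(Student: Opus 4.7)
The plan is to handle the two constants separately using the same local-$C^1$/Arzel\`a--Ascoli recipe that is applied repeatedly throughout the paper. Fix $g_0 \in \mathcal{U}$; it suffices to bound $C_{Lip}$ and $C^u_g$ uniformly in a small $C^1$-neighborhood of $g_0$, then pass to a global constant by compactness of $\mathcal{U}$ in $C^1$ (Corollary \ref{cor: Compact Embedding}).

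For $C_{Lip}$, I would start from the explicit formula \eqref{eqn:Derivative of Holonomy Formula} for the derivative of the stable holonomy. Taking logarithms, splitting $\log D_u g(f^n \cdot) - \log D_u g(f^n \hol^{s,g}(\cdot))$ and invoking the uniform hyperbolicity bounds \eqref{eqn:Anosov Condition} together with the uniform $C^2$-bound on $g \in \mathcal{U}$, the resulting series is dominated term-by-term by a geometric series in $\nu_0$ with constant proportional to $\sup d^s_g(x, \hol^{s,g}_{a,b}(x))$; this is exactly the calculation already carried out in the proofs of Lemma \ref{lem: Uniformity along A^u_f} and Lemma \ref{lem:Uniform bound for unstable derivative first step}. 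The stable distances appearing in the holonomies $\hol^{s,g}_{h(y),h(x)}$ and $\hol^{s,g}_{h(z),h(x)}$ are uniformly bounded because, as arranged in Section \ref{sec: Uniformity of the New Conjugacy}, the heteroclinic points $x_0, x_1$ are transported by structural stability, hence $A^u_g, A^s_g$ have uniformly bounded diameters in $\mathcal{U}$. This yields a uniform upper bound on $|D_u \hol^{s,g}|$ on the relevant leaf segments, and hence a uniform Lipschitz constant for the holonomies with respect to the leafwise metric.

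For $C^u_g$, I would argue that this is a purely local comparison between the induced Riemannian metric on the $C^2$ curve $A^u_g$ and the ambient metric on $\mathbb{T}^2$, at scales $\leq \delta$. Since $g$ is $C^2$, each unstable leaf is a $C^2$ curve; tangency to $E^u_g$ together with the invariance relation $Dg(E^u_g) = E^u_g$ allows one to bound the geodesic curvature of the leaf in terms of $|g|_{C^2}$ and the hyperbolicity rates from \eqref{eqn:Anosov Condition}. In local coordinates adapted to any point $a \in A^u_g$, the leaf is the graph of a $C^1$ function over the tangent line at $a$ with derivative vanishing at $a$ and $C^1$-modulus controlled by the curvature; a Taylor estimate then gives $d^u_g(a,b) \leq (1 + K\delta)\, d(a,b)$ whenever $d(a,b) \leq \delta$, with $K$ uniform on $\mathcal{U}$.

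The only real technical point is the uniform $C^2$ bound on unstable leaves of elements of $\mathcal{U}$, together with the uniform lower bound on the transversality angle between $E^s_g$ and $E^u_g$; both follow from the standard invariance equation for the tangent bundle of the leaf combined with uniform hyperbolicity and the $C^1$-compactness of $\mathcal{U}$, so there is no genuine obstacle beyond careful bookkeeping. The $C_{Lip}$ estimate is essentially a corollary of calculations already in Section \ref{sec: Uniformity of the New Conjugacy}, while $C^u_g$ is an elementary geometric estimate on smooth curves.
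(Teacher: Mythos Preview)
Your treatment of $C_{Lip}$ is essentially identical to the paper's: the paper simply says that uniformity of $C_{Lip}$ follows from the computations already carried out in the proof of Lemma~\ref{lem: Uniformity along A^u_f}, which is exactly the series estimate on $\log D_u\hol^{s,g}$ you describe.

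For $C^u_g$, your argument is correct but takes a different route from the paper. You invoke uniform $C^2$ bounds on unstable leaves to control curvature, then use a Taylor estimate to compare arc length with chord length. The paper instead works at the $C^1$ level: near any $x\in A^u_g$ one fixes a constant line field $L$ and a uniform angle bound $\measuredangle(E^u_g,L)\leq\theta<\pi/2$ on $B_\delta(x)$, robust under $C^1$-perturbations of $g$; then $A^u_g\cap B_\delta(x)$ is a Lipschitz graph over $L$ with slope $\leq\tan\theta$, giving $d^u_g(y,z)\leq\tan(\theta)\,d(y,z)$ directly, and one finishes by covering $A^u_g$ with finitely many such balls and applying the Arzel\`a--Ascoli argument. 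The paper's approach is more elementary in that it only uses $C^0$ continuity of $E^u_g$ and its $C^1$-robustness, avoiding any appeal to uniform $C^2$ leaf geometry; your approach is heavier but yields the same conclusion and would be necessary anyway if one wanted a quantitative $K$ rather than a compactness constant.
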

\begin{proof}
    The uniformity of $C_{Lip}$ follows from the proof of Lemma \ref{lem: Uniformity along A^u_f}. The following argument for uniformity of $C^u_g$ is well-known to experts but we include it here for the sake of completeness.

    Given $x\in A^u_g$, there exists a line field $L$ and $\theta<\pi/2$ such that $\measuredangle(E^u_g(y),L(y))\leq \theta$ for every $y\in B_\delta(x)$. Moreover, the same is true for every $\Tilde{g}$ sufficiently $C^1$-close to $g$ with the same line field $L$ and angle $\theta$. Therefore we can locally consider $A^u_g$ as a Lipschitz graph over the line field $L$ with derivative bounded by $\tan(\theta)$.  Thus, for $y,z\in A^u_g\cap B_\delta(x)$, we have $d^u_g(y,z)\leq \tan(\theta)d(y,z).$ Since $A^u_g$ can be covered by finitely many $\delta$-balls (with the same number of balls for every $\Tilde{g}$ in a $C^1$-neighborhood of $g$) we are finished locally. The lemma is then finished by applying the Arzel\`a-Ascoli argument.    
\end{proof}
    In light of this sublemma, we can absorb the constants $C_{Lip}$ and $C^u_g$ into $C_0$, and the reduction is complete.
\end{proof}

\begin{proof}[Proof of Lemma \ref{lem:First Step Estimate}]
Up until now, we have suppressed the dependence of $h_N$ on the reparameterizations $\sigma_i$ in our notation. From now on, $h_N$ will denote the piecewise $C^2$ map defined by \ref{def:h_N|A^u_{f,1}} and $h_{N,\sigma}$ will denote the map obtained by composing $h_N$ by the $\sigma_i$ on each interval $A^u_{f,i}$. Recall that we take $\sigma$ to satisfy $|\sigma|_{C^0}\leq \gamma^{2N}$, where $0<\gamma<1$ is from Sublemma \ref{lem: C1 size of reparam}. Therefore we have $d(h_N(x),h_{N,\sigma}(x))<\gamma^{2N}$. By the triangle inequality, it suffices to estimate
$d(h(x),h_N(x))$ for $x\in A^u_{f,i}$, $i=1,2,3.$

Fix $x\in A^u_{f,1}$. Then
$$
d(h(x),h_N(x))=|h(x)-h_N(x)|=| (I^u_{g,h(x_0)})^{-1}(I^u_{g,h(x_0)}(h(x)))- (I^u_{g,h(x_0)})^{-1}(I^u_{f,x_0}(x))|
$$
$$
\leq \lip((I^u_{g,h(x_0)}|_{A^u_{g,1}})^{-1})|I^u_{g,h(x_0)}(h(x))-I^u_{f,x_0}(x)|,
$$
where $\lip(\varphi)$ denotes the Lipschitz constant of the function $\varphi$. 
\begin{sublemma}
    There exists $L>0$ such that for every $f,g\in\mathcal{U}$ and every conjugacy $h$ homotopic to the identity such that $hf=gh,$ $\lip((I^u_{g,h(x_0)}|_{A^u_{g,1}})^{-1})\leq L$.
\end{sublemma}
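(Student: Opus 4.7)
The plan is to reduce the sublemma to a uniform lower bound on the normalized density $\omega^u_g$. Since $I^u_{g,h(x_0)}|_{A^u_{g,1}}$ is a $C^2$ diffeomorphism from $A^u_{g,1}$ onto $[0,1]$ whose derivative is exactly the (normalized) density $\omega^u_g$ of $\mu^u_{g,h(x_0)}$ along $A^u_{g,1}$, its inverse is Lipschitz with
\begin{equation*}
\lip\bigl((I^u_{g,h(x_0)}|_{A^u_{g,1}})^{-1}\bigr) \leq \frac{1}{\inf_{y\in A^u_{g,1}} \omega^u_g(y)}.
\end{equation*}
So everything boils down to producing a lower bound on $\omega^u_g$ that is uniform in $g \in \mathcal{U}$ and in the conjugacy $h$.

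The second step mirrors calculations already carried out in Lemma \ref{lem: Uniformity along A^u_f}. Writing $\omega^u_g(y, h(x_0)) = \prod_{i=1}^{\infty} D_u g(g^{-i}(y))/D_u g(g^{-i}(h(x_0)))$ for the density normalized to $1$ at $h(x_0)$, the same estimate used to prove (\ref{eqn:Uniform Bound on Density}) gives
\begin{equation*}
e^{-K d^u_g(h(x_0), h(x_0'))} \leq \omega^u_g(y, h(x_0)) \leq e^{K d^u_g(h(x_0), h(x_0'))}
\end{equation*}
with $K$ uniform in $g \in \mathcal{U}$, coming from hyperbolicity and uniform $C^2$ bounds on $\mathcal{U}$. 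Renormalizing by $\int_{A^u_{g,1}} \omega^u_g(z, h(x_0))\, dm(z)$ to match the convention of (\ref{def:h_N|A^u_{f,1}}) then yields
\begin{equation*}
\omega^u_g(y) \geq \frac{e^{-2K d^u_g(h(x_0), h(x_0'))}}{d^u_g(h(x_0), h(x_0'))}
\end{equation*}
for every $y \in A^u_{g,1}$.

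The third step is to control $d^u_g(h(x_0), h(x_0'))$ from above and below, uniformly in $g \in \mathcal{U}$. This is where the conventions fixed in Section \ref{sec: The Construction} become essential: the heteroclinic points for each $g$ are chosen via the structural stability conjugacy $h_{g,\overline{g}}$ applied to a fixed reference pair, so estimates (\ref{eqn: Length Upper bound})--(\ref{eqn: Length Lower Bound}), combined with the uniform H\"older control of $h_{g,\overline{g}}$ from Lemma \ref{lem:Uniform Holder}, yield constants $0 < \ell_0 \leq \ell_1 < \infty$ with $\ell_0 \leq d^u_g(h(x_0), h(x_0')) \leq \ell_1$ in a full $C^1$ neighborhood of any fixed $g$. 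The usual Arzel\`a--Ascoli compactness argument over $\mathcal{U}$ then promotes this to uniform bounds on all of $\mathcal{U}$. Taking $L := \ell_1\, e^{2K\ell_1}$ finishes the proof.

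The only mild subtlety, rather than a real obstacle, is the dependence on the conjugacy $h$: since the heteroclinic points $x_0, x_0'$ are tied to $f$ and $h(x_0), h(x_0')$ are their images, we must be sure that the lengths involved depend only on the $\mathcal{U}$-data and not on $h$. This is precisely arranged by the convention (from Section \ref{sec: The Construction}) of transporting the reference heteroclinic configuration by the structural stability conjugacy, so no additional argument is required beyond what already appears in the proof of Lemma \ref{lem: Uniformity along A^u_f}.
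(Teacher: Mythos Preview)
Your proof is correct and follows exactly the same approach as the paper: the paper's proof simply observes that $(I^u_{g,h(x_0)}|_{A^u_{g,1}})^{-1}$ has derivative $1/\omega^u_g$ and then points back to the proof of Lemma~\ref{lem: Uniformity along A^u_f} for the uniform lower bound on $\omega^u_g$. You have spelled out explicitly the content of that reference (the bound on $\omega^u_g(\cdot,h(x_0))$, the renormalization, and the uniform length control via structural stability and the Arzel\`a--Ascoli argument), which is precisely what the paper intends.
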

\begin{proof}
    Since $(I^u_{g,h(x_0)}|_{A^u_{g,1}})^{-1})'=\frac{1}{\omega^u_g(I^u_{g,h(x_0)}|_{A^u_{g,1}})^{-1})}$, the uniformity claim is contained in the proof of Lemma \ref{lem: Uniformity along A^u_f}.
\end{proof}
We can thus absorb the constant $L$ into $C_0$. We therefore must estimate the difference
\begin{equation}
\label{eqn:C^0 Estimate}
   \left| \int_{h(x_0)}^{h(x)}d\mu^u_{g,h(x_0)}-\int_{x_0}^{x}d\mu^u_{f,x_0}\right| 
\end{equation}
This will be done in two main steps. First, we will show how to replace the integrals with respect to conditional measures with integrals with respect to the respective SRB measures over thin rectangles. Next, we will use an effective equidistribution theorem to estimate the difference between the integrals with respect to SRB measure using the matching periodic data.

Given a point $x\in A^u_{f,1}$ and $\varepsilon>0$, let
$A^u_{f,1}(x,\varepsilon)$ denote a su-rectangle constructed as follows: Recall that $W^s_\varepsilon(x_0)$ denotes the local stable manifold of $x_0$ of size $\varepsilon$. We then form $A^u_{f,1}(x,\varepsilon)$ by sliding $W^s_{f,\varepsilon}(x_0)$ along the unstable holonomy $\hol^{u,f}_{x_0,x}$. Then for every $x\in A^u_{f,1}$ and all $\varepsilon>0$ small enough, $A^u_{f,1}(x,\varepsilon)$ is an embedded rectangle.

\begin{theorem}
\label{Thm:Effective Estimate of Conditional Measures}
    Let $\varphi:\mathbb{T}^2\rightarrow\mathbb{R}$ be a Lipschitz continuous function. Then there exists a constant $C_2>0$ such that for every $\varepsilon>0$, 
    \begin{equation}
       \left| \int_{x_0}^x\varphi d\mu^u_{f,x_0}-\frac{1}{\mu_f(A^u_{f,1}(x_0',\varepsilon))}\int_{A^u_{f,1}(x,\varepsilon)}\varphi d\mu_f\right|\leq C_2||\varphi||_{Lip}\varepsilon.
    \end{equation}
\end{theorem}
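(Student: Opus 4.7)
The plan is to apply Rokhlin's disintegration theorem to express $\int_{A^u_{f,1}(x,\varepsilon)}\varphi\, d\mu_f$ as an iterated integral over the stable transversal $W^s_{f,\varepsilon}(x_0)$ and over unstable leaves, then to exploit the $\varepsilon$-smallness of this transversal together with the Lipschitz control on the Radon--Nikodym derivative $\Psi_{f,x_0}(y,z)$ to collapse the inner integrals back onto $W^u_f(x_0)$, where they are literally $\int_{x_0}^{x}\varphi\, d\mu^u_{f,x_0}$ up to controlled errors.

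First I would use the local product structure of the SRB measure together with the holonomy change of variables described in Section~\ref{sec: SRB Measure and area} to write
\begin{equation*}
\int_{A^u_{f,1}(x,\varepsilon)}\varphi\, d\mu_f = \int_{W^s_{f,\varepsilon}(x_0)}\int_{x_0}^{x} \varphi(\hol^{s,f}_{x_0,y}(z))\,\Psi_{f,x_0}(y,z)\, d\mu^u_{f,x_0}(z)\, d\hat\mu_f(y),
\end{equation*}
where the key observation is that, by the very definition of $A^u_{f,1}(x,\varepsilon)$ as the rectangle obtained by sliding $W^s_{f,\varepsilon}(x_0)$ along unstable holonomy, the stable holonomy $\hol^{s,f}_{x_0,y}$ carries $[x_0,x]_u\subset W^u_f(x_0)$ bijectively onto $W^u_f(y)\cap A^u_{f,1}(x,\varepsilon)$ for every $y\in W^s_{f,\varepsilon}(x_0)$. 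Taking $\varphi\equiv 1$ and $x=x_0'$ gives the analogous formula for $\mu_f(A^u_{f,1}(x_0',\varepsilon))$.

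Next I would use the two key properties of $\Psi_{f,x_0}$ recorded at the end of Section~\ref{sec: SRB Measure and area}: when $y=x_0$ the holonomy is the identity, so the infinite product defining $\Psi_{f,x_0}$ collapses termwise to $1$ and thus $\Psi_{f,x_0}(x_0,z)=1$ for every $z$; and $\Psi_{f,x_0}$ is Lipschitz in $y$ with seminorm bounded uniformly in $f\in\mathcal U$ and in $x_0$. Since $y\in W^s_{f,\varepsilon}(x_0)$ forces $d(x_0,y)\le\varepsilon$, this yields $|\Psi_{f,x_0}(y,z)-1|\le K\varepsilon$ uniformly in $z$. Similarly, the stable holonomy displacement $d(\hol^{s,f}_{x_0,y}(z),z)$ is bounded by a uniform constant times $\varepsilon$, so Lipschitz continuity of $\varphi$ yields $|\varphi(\hol^{s,f}_{x_0,y}(z))-\varphi(z)|\le C\|\varphi\|_{Lip}\varepsilon$. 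Substituting these two estimates into the iterated integrals, and using the normalization $\int_{x_0}^{x_0'}d\mu^u_{f,x_0}=1$ fixed in Section~\ref{sec: The Construction}, I obtain
\begin{equation*}
\int_{A^u_{f,1}(x,\varepsilon)}\varphi\, d\mu_f = \hat\mu_f(W^s_{f,\varepsilon}(x_0))\left(\int_{x_0}^{x}\varphi\, d\mu^u_{f,x_0} + O(\|\varphi\|_{Lip}\varepsilon)\right),
\end{equation*}
\begin{equation*}
\mu_f(A^u_{f,1}(x_0',\varepsilon)) = \hat\mu_f(W^s_{f,\varepsilon}(x_0))\bigl(1+O(\varepsilon)\bigr),
\end{equation*}
with the implied constants depending only on $\mathcal U$.

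Dividing the first display by the second and expanding $(1+O(\varepsilon))^{-1}=1+O(\varepsilon)$ cancels the common prefactor $\hat\mu_f(W^s_{f,\varepsilon}(x_0))$ and yields the claimed inequality, with $C_2$ absorbing all of the uniform constants. The main technical point, which is where essentially all of the work lives, is verifying that the Lipschitz seminorm of $\Psi_{f,x_0}$, the constant controlling the stable holonomy displacement, and the implicit bounds on the conditional densities are all independent of both $f\in\mathcal U$ and of the choice of basepoint $x_0$; these uniformity statements are either asserted at the end of Section~\ref{sec: SRB Measure and area} or else follow, via the Arzelà--Ascoli reduction of Section~\ref{sec: The Arzela-Ascoli Argument}, from the uniform hyperbolicity bounds~\eqref{eqn:Anosov Condition}.
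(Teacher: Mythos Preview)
Your proposal is correct and follows essentially the same approach as the paper: disintegrate $\mu_f$ on the rectangle, pull everything back to the base leaf $W^u_f(x_0)$ via stable holonomy, and use that $\Psi_{f,x_0}(x_0,\cdot)\equiv 1$ together with the Lipschitz control on $\Psi_{f,x_0}$ and on $\varphi$ to bound the discrepancy by $C\|\varphi\|_{Lip}\varepsilon$. The only cosmetic difference is that the paper uses the exact identity $\hat\mu_f(W^s_{f,\varepsilon}(x_0))=\mu_f(A^u_{f,1}(x_0',\varepsilon))$ (from the definition of the quotient measure) to place both terms over the same normalizing factor from the start, thereby avoiding your division-and-expansion step; your second displayed formula is in fact an equality with no $O(\varepsilon)$ error, but carrying the error term does no harm.
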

\begin{proof}
 Let 
 $$
 d\mu_f|_{A^u_{f,1}(x_0',\varepsilon)}=\Psi_{f,x_0}(u,s)d\mu^u_{f,x_0}(u)d\hat{\mu}_f(s)
 $$
be the Lipschitz local product structure of $\mu_f$ on the rectangle $A^u_{f,1}(x_0',\varepsilon)$ where $\hat{\mu}_f$ is the quotient measure of the rectangle $A^u_{f,1}(x_0',\varepsilon)$ on the the transversal $W^s_{f,\varepsilon}(x_0)$. Here we are using stable and unstable coordinates $s$ and $u$, respectively. In these coordinates, $s=0$ corresponds to the unstable curve $A^u_{f,1}$, and we have $\Psi_{f,x_0}(u,0)=1$ for all $u$. Recalling that $\hat{\mu}_f(W^s_{f,\varepsilon}(x_0))=\mu_f(A^u_{f,1}(x_0',\varepsilon))$, we can write
$$
\int_{x_0}^x\varphi d\mu^u_{f,x_0}=\frac{\mu_f(A^u_{f,1}(x_0',\varepsilon))}{\mu_f(A^u_{f,1}(x_0',\varepsilon))}\int_{x_0}^x\varphi d\mu^u_{f,x_0}=
\frac{1}{\mu_f(A^u_{f,1}(x_0',\varepsilon))}\int_{W^s_{f,\varepsilon}(x_0)}\int^x_{x_0}\varphi(u,0)d\mu^u_{f,x_0}d\hat{\mu}_f(s)=$$
$$
\frac{1}{\mu_f(A^u_{f,1}(x_0',\varepsilon))}\int_{W^s_{f,\varepsilon}(x_0)}\int^x_{x_0}\varphi(u,0)\Psi_{f,x_0}(u,0) d\mu^u_{f,x_0}d\hat{\mu}_f(s).
$$
Likewise,
$$
\frac{1}{\mu_f(A^u_{f,1}(x_0',\varepsilon))}\int_{A^u_{f,1}(x,\varepsilon)}\varphi d\mu_f=
\frac{1}{\mu_f(A^u_{f,1}(x_0',\varepsilon))}\int_{W^s_{f,\varepsilon}(x_0)}\int^x_{x_0}\varphi(u,s)\Psi_{f,x_0}(u,s)d\mu^u_{f,x_0}(u)d\hat{\mu}_f(s).
$$
Therefore,
$$
 \left| \int_{x_0}^x\varphi d\mu^u_{f,x_0}-\frac{1}{\mu_f(A^u_{f,1}(x_0',\varepsilon))}\int_{A^u_{f,1}(x,\varepsilon)}\varphi d\mu_f\right|\leq
$$
$$
\frac{1}{\mu_f(A^u_{f,1}(x_0',\varepsilon))}\int_{W^s_{f,\varepsilon}(x_0)}\int^x_{x_0}\left| \varphi(u,s)\Psi_{f,x_0}(u,s)-\varphi(u,0)\Psi_{f,x_0}(u,0)\right|d\mu^u_{f,x_0}(u)d\hat{\mu}_f(s)\leq
$$
$$
\frac{1}{\mu_f(A^u_{f,1}(x_0',\varepsilon))}\int_{W^s_{f,\varepsilon}(x_0)}\int^x_{x_0}|\varphi(u,s)|\left|\Psi_{f,x_0}(u,s)-\Psi_{f,x_0}(u,0)\right|d\mu^u_{f,x_0}(u)d\hat{\mu}_f(s)+
$$
$$
\frac{1}{\mu_f(A^u_{f,1}(x_0',\varepsilon))}\int_{W^s_{f,\varepsilon}(x_0)}\int^x_{x_0}\Psi_{f,x_0}(u,0)\left|\varphi(u,s)-\varphi(u,0)\right|d\mu^u_{f,x_0}(u)d\hat{\mu}_f(s)\leq
$$
$$
C(x_0,x)||\varphi||_{\infty}||\Psi_{f,x_0}|_{Lip} d((u,s),(u,0))+C(x_0,x)|\varphi|_{Lip}d((u,s),(u,0)),
$$
where $C(x_0,x)=\int_{x_0}^xd\mu^u_{f,x_0}<C(x_0,x_0')$, which is uniformly bounded in $f\in\mathcal{U}$ by the standard Arzel\`a-Ascoli argument. Finally, we claim that $d((u,s),(u,0))<C_2\epsilon$ for some uniform constant $C_2>0$. To see this, first observe that 
$d((u,s),(u,0))=d(\hol^{u,f}(0,s),\hol^{u,f}(0,0))$. Since we can always bound the metric $d$ on $\mathbb{T}^2$ by the leaf metric on the stable manifold, we have
$$d(\hol^{u,f}(0,s),\hol^{u,f}(0,0))\leq d^s(\hol^{u,f}(0,s),\hol^{u,f}(0,0))\leq |\hol^{u,f}|_{Lip}d^s((0,s),(0,0))<C_{Lip}\varepsilon.$$ 
Absorbing all constants into $C_2$ ends the proof.    
\end{proof}
\noindent Analogous statements hold for $g$ as well as for rectangles based on the unstable segments $A^u_{f,2}$ and $A^u_{f,3}$.

Given $n\in\mathbb{N}$, let 
$$
\mu_f^n=\frac{1}{Z_n(f)}\sum_{x\in\fix{f^n}}\frac{1}{D_u(f^n)(x)}\delta_x,
$$
where $Z_n(f)$ is a normalization constant to make $\mu_f^n$ a probability measure. It follows from Bowen's equidistribution theorem (see \cite{Bowen_1974}) that these discrete measures converge (in the weak$^*$-topology) to the SRB measure $\mu_f$. The next theorem makes this quantitative.

\begin{theorem}
\label{Thm:Effective Equidistribution}
    Let $\mathcal{U}$ be as in Theorem \ref{Thm:Main Theorem}. Then there exists constants $C_3>0$ and $0<\tau<1$ depending only on $\mathcal{U}$ such that for every $f\in\mathcal{U}$, every $n\in\mathbb{N}$, and every Lipschitz function $\varphi:\mathbb{T}^2\rightarrow\mathbb{R}$, we have 
\begin{equation}
    \left|\int \varphi d\mu_f-\int \varphi d\mu_f^n  \right|\leq 
    C_3||\varphi||_{Lip}\tau^n.
\end{equation}
   
\end{theorem}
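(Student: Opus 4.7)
The plan is to prove Theorem \ref{Thm:Effective Equidistribution} by lifting the problem to symbolic dynamics via a Markov partition and exploiting the spectral gap of the transfer operator associated to the geometric potential $\psi_f=-\log D_u f$. Recall that the SRB measure $\mu_f$ is precisely the (projection of the) equilibrium state for $\psi_f$, and that for any $x\in\fix(f^n)$ the weight $1/D_u f^n(x)$ equals $\exp(S_n\psi_f(x))$ where $S_n\psi_f$ denotes the Birkhoff sum. Thus the numerator of $\mu_f^n$ reads
\begin{equation*}
\sum_{x\in\fix(f^n)} e^{S_n\psi_f(x)}\varphi(x),
\end{equation*}
and $Z_n(f)$ is the same sum with $\varphi\equiv 1$. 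This is the standard shape that is directly controlled by Ruelle--Bowen thermodynamic formalism.

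First, I would fix a Markov partition $\mathcal{R}=\{R_1,\dots,R_k\}$ for $f$ of diameter less than a small constant depending only on $\mathcal{U}$, yielding a semiconjugacy $\pi:\Sigma_A\rightarrow\mathbb{T}^2$ from a subshift of finite type. The lifted potential $\tilde{\psi}_f=\psi_f\circ\pi$ is Hölder continuous on $\Sigma_A$ with exponent and seminorm controlled by the hyperbolicity constants in (\ref{eqn:Anosov Condition}) and by $|f|_{C^2}$, so by Ruelle's Perron--Frobenius theorem, the transfer operator $\mathcal{L}_{\tilde{\psi}_f}$ acting on the corresponding Hölder space has a simple leading eigenvalue $e^{P(\tilde{\psi}_f)}=1$ (since $\mu_f$ is SRB, $P(\psi_f)=0$) and a spectral gap $\rho<1$ depending only on these Hölder data. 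Standard trace formulas then give
\begin{equation*}
\sum_{\tilde{x}\in\fix(\sigma^n)} e^{S_n\tilde{\psi}_f(\tilde{x})}\tilde{\varphi}(\tilde{x}) = \int\tilde{\varphi}\, d\tilde{\mu}_f + O(\rho^n\|\tilde{\varphi}\|_{\mathrm{Hol}}),
\end{equation*}
with the error constant controlled purely by the spectral data of $\mathcal{L}_{\tilde{\psi}_f}$. Dividing by the analogous expression with $\tilde{\varphi}\equiv 1$ and using that $Z_n(f)$ is bounded away from $0$ uniformly (a consequence of the variational principle and the uniform spectral gap) converts this into the desired exponential estimate.

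The main obstacle, and the content warranting a separate section, is controlling the discrepancy between $\fix(f^n)$ on $\mathbb{T}^2$ and $\fix(\sigma^n)$ on $\Sigma_A$. A periodic point lying in the interior of a rectangle has a unique symbolic preimage, but points on the boundary $\partial\mathcal{R}$ may have up to four preimages, producing overcounting. This is handled in the classical literature via inclusion--exclusion on the boundary faces $\partial^s\mathcal{R}$ and $\partial^u\mathcal{R}$ (which are themselves unions of subshifts of finite type of strictly smaller topological pressure, since $\mu_f(\partial\mathcal{R})=0$ by local product structure). I would run the same transfer-operator estimate on each boundary subshift and show that its contribution is exponentially negligible compared to the leading term with a decay rate $\rho'<1$ that can be taken uniform in $\mathcal{U}$. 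Care is required to confirm that the Lipschitz observable $\varphi:\mathbb{T}^2\rightarrow\mathbb{R}$ lifts to something Hölder on $\Sigma_A$ with seminorm bounded by $\|\varphi\|_{\mathrm{Lip}}$ times a constant depending only on $\mathcal{U}$; this follows from the Hölder estimate $d(\pi(\tilde x),\pi(\tilde y))\lesssim\theta^{n(\tilde x,\tilde y)}$ where $n(\tilde x,\tilde y)$ is the first disagreement index.

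Finally, I would upgrade the estimate from a fixed $f$ to the whole set $\mathcal{U}$ via the Arzelà--Ascoli argument of Section \ref{sec: The Arzela-Ascoli Argument}. For a fixed $f\in\mathcal{U}$, strong structural stability (Theorem \ref{thm: Strong Structural Stability}) guarantees that every $\tilde f$ in a small $C^1$-neighborhood of $f$ admits the same combinatorial Markov partition (pushed forward by the structural-stability conjugacy), and the hyperbolicity constants, the $C^1$-norm of $\tilde f$, the Hölder seminorm of the geometric potential, and hence the spectral gap of the transfer operator, all vary continuously. Thus the constants $C_3$ and $\tau$ can be chosen uniform on this neighborhood, and by $C^1$-compactness of $\mathcal{U}$ a finite covering yields uniform constants on all of $\mathcal{U}$. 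The hard part of the argument is the bookkeeping for the boundary overcounting while maintaining this uniformity, since distinct $f$'s have distinct Markov partitions with distinct boundary subshifts; this is where the additional care alluded to in Section \ref{sec: Introduction} (versus the one-dimensional setting of \cite{FDR1}) is required.
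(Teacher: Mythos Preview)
Your proposal is correct and follows essentially the same route as the paper: lift to a subshift of finite type via a Markov partition, apply the transfer-operator spectral gap for the lifted geometric potential (the paper packages this as Theorem \ref{thm:EE for SFT}, proved in \cite{FDR1}), control the discrepancy between $\fix(\sigma_A^n)$ and $\fix(f^n)$ by Manning's boundary-subshift counting, and then argue uniformity over $\mathcal{U}$. The only differences are in bookkeeping and sourcing: the paper explicitly splits the discrepancy into an overcounting set $B_n$ and a set $C_n$ of boundary periodic points whose symbolic lifts have period $jn>n$ (a case your description does not isolate, though your boundary-subshift inclusion--exclusion covers it), and for the uniform spectral gap it cites the Birkhoff-cone estimates of \cite{Frédéric} rather than your continuity-of-spectrum-plus-compactness argument.
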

We defer the proof of Theorem \ref{Thm:Effective Equidistribution} to section 6.

We will now explain how to use Theorems \ref{Thm:Effective Estimate of Conditional Measures} and \ref{Thm:Effective Equidistribution} to estimate (\ref{eqn:C^0 Estimate}). First observe that $h(A^u_{f,1}(x,\varepsilon))$ is an su-rectangle containing the segment $[h(x_0),h(x)]_u$. Therefore, we may write
$h(A^u_{f,1}(x,\varepsilon)):=A^u_{g,1}(h(x),\Tilde{\varepsilon})$, where, by H\"older continuity of $h$, we have
$$|h^{-1}|_{\alpha_0}\varepsilon^{1/\alpha_0}\leq
\Tilde{\varepsilon}\leq |h|_{\alpha_0}\varepsilon^{\alpha_0}.$$
Here, $\alpha_0$ is the H\"older exponent of $h$, which is uniform by Lemma \ref{lem:Uniform Holder}. 

We now estimate
$$
\left| \int_{h(x_0)}^{h(x)}d\mu^u_{g,h(x_0)}-\int_{x_0}^{x}d\mu^u_{f,x_0}\right|$$
$$
\leq
 \left| \int_{x_0}^xd\mu^u_{f,x_0}-\frac{1}{\mu_f(A^u_{f,1}(x_0',\varepsilon))}\int_{A^u_{f,1}(x,\varepsilon)} d\mu_f\right|+$$
$$
  \left| \int_{h(x_0)}^{h(x)} d\mu^u_{g,h(x_0)}-\frac{1}{\mu_g(A^u_{g,1}(h(x_0'),\Tilde{\varepsilon}))}\int_{A^u_{g,1}(h(x),\Tilde{\varepsilon})} d\mu_g\right|+
$$
$$
\left| \frac{1}{\mu_f(A^u_{f,1}(x_0',\varepsilon))}\int_{A^u_{f,1}(x,\varepsilon)} d\mu_f-\frac{1}{\mu_g(A^u_{g,1}(h(x_0'),\Tilde{\varepsilon}))}\int_{A^u_{g,1}(h(x),\Tilde{\varepsilon})}d\mu_g\right|.
$$
We now choose $\varepsilon=\tau^{N/4}$, where $0<\tau<1$ is from Theorem \ref{Thm:Effective Equidistribution} and $N\in\mathbb{N}$ is from Theorem \ref{Thm:Main Theorem}. Then applying Theorem \ref{Thm:Effective Estimate of Conditional Measures} to the first two terms above, with $\varphi=1$ the constant function, we get that these terms are bounded by $C_2\tau^{N/4}$ and $C_2C_{\alpha_0}\tau^{\alpha_0N/4}$, respectively. We further split up the third term as follows:
$$
\left| \frac{1}{\mu_f(A^u_{f,1}(x_0',\varepsilon))}\int_{A^u_{f,1}(x,\varepsilon)} d\mu_f-\frac{1}{\mu_g(A^u_{g,1}(h(x_0'),\Tilde{\varepsilon}))}\int_{A^u_{g,1}(h(x),\Tilde{\varepsilon})}d\mu_g\right|\leq
$$
$$
\left| \frac{1}{\mu_f(A^u_{f,1}(x_0',\varepsilon))}\int_{A^u_{f,1}(x,\varepsilon)} d\mu_f-\frac{1}{\mu_f(A^u_{f,1}(x_0',\varepsilon))}\int_{A^u_{g,1}(h(x),\Tilde{\varepsilon})}d\mu_g\right|+
$$
$$
\left|\frac{1}{\mu_f(A^u_{f,1}(x_0',\varepsilon))}\int_{A^u_{g,1}(h(x),\Tilde{\varepsilon})}d\mu_g-\frac{1}{\mu_g(A^u_{g,1}(h(x_0'),\Tilde{\varepsilon}))}\int_{A^u_{g,1}(h(x),\Tilde{\varepsilon})}d\mu_g\right|=
$$
$$
\frac{1}{\mu_f(A^u_{f,1}(x_0',\varepsilon))} \left|\int_{A^u_{f,1}(x,\varepsilon)} d\mu_f-\int_{A^u_{g,1}(h(x),\Tilde{\varepsilon})}d\mu_g
\right|+$$
$$\left|\frac{1}{\mu_f(A^u_{f,1}(x_0',\varepsilon))}- \frac{1}{\mu_g(A^u_{g,1}(h(x_0'),\Tilde{\varepsilon}))}\right|\int_{A^u_{g,1}(h(x),\Tilde{\varepsilon})}d\mu_g\leq
$$
$$
\frac{1}{\mu_f(A^u_{f,1}(x_0',\varepsilon))} \left|\int_{A^u_{f,1}(x,\varepsilon)} d\mu_f-\int_{A^u_{g,1}(h(x),\Tilde{\varepsilon})}d\mu_g
\right|+$$
$$
\left| \frac{1}{\mu_f(A^u_{f,1}(x_0',\varepsilon))}- \frac{1}{\mu_g(A^u_{g,1}(h(x_0'),\Tilde{\varepsilon}))}\right|\mu_g(A^u_{g,1}(h(x_0'),\Tilde{\varepsilon}))=
$$
$$
\frac{1}{\mu_f(A^u_{f,1}(x_0',\varepsilon))} \left|\int_{A^u_{f,1}(x,\varepsilon)} d\mu_f-\int_{A^u_{g,1}(h(x),\Tilde{\varepsilon})}d\mu_g
\right|+$$
$$
\frac{1}{\mu_f(A^u_{f,1}(x_0',\varepsilon))} \left|\mu_f(A^u_{f,1}(x_0',\varepsilon))- \mu_g(A^u_{g,1}(h(x_0'),\Tilde{\varepsilon}))\right|\leq
$$
$$
\frac{1}{\mu_f(A^u_{f,1}(x_0',\varepsilon))} \left|\int_{A^u_{f,1}(x,\varepsilon)} d\mu_f-\int_{A^u_{g,1}(h(x),\Tilde{\varepsilon})}d\mu_g
\right|+
$$
\begin{equation}
\label{eqn:Pre-EE1}
\frac{1}{\mu_f(A^u_{f,1}(x_0',\varepsilon))} \left|\int_{A^u_{f,1}(x_0',\varepsilon)} d\mu_f-\int_{A^u_{g,1}(h(x_0'),\Tilde{\varepsilon})}d\mu_g
\right|.
\end{equation}
We will handle both terms on the right side of (\ref{eqn:Pre-EE1}) in exactly the same way using Theorem \ref{Thm:Effective Equidistribution}. However, before we can do so we need to show that the measures $\mu_f(A^u_{f,1}(x_0',\varepsilon))$ are on a comparable size scale to $\varepsilon$. This is the first place where we will use the area-preserving hypothesis on $f$.
\begin{sublemma}
    \label{lem: Comparing area to Length}
    There exists a constant $C>0$ such that for every $f\in\mathcal{U}$, we have $C^{-1}\varepsilon\leq \mu_f(A^u_{f,1}(x_0',\varepsilon))\leq C\varepsilon$.
\end{sublemma}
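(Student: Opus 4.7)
The plan is to use the area-preserving hypothesis to reduce the problem to comparing Lebesgue area with $\varepsilon$, and then to compute the area via Fubini on the rectangle using the local product structure.

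First, by Lemma~\ref{lem:Uniform bounds on area Density}, the area-preserving hypothesis gives $d\mu_f=\varphi_f\,dm$ with $c_{\mathcal{U}}^{-1}\leq \varphi_f\leq c_{\mathcal{U}}$ uniformly on $\mathcal{U}$. Hence it suffices to prove the comparability $C^{-1}\varepsilon\leq m(A^u_{f,1}(x_0',\varepsilon))\leq C\varepsilon$ for the standard Lebesgue measure $m$ on $\mathbb{T}^2$, with $C>0$ uniform in $f\in\mathcal{U}$. This is where the area-preserving assumption is essential: for a general dissipative $f\in\mathcal{U}$, the SRB measure could concentrate near unstable leaves, and the transverse $\hat\mu_f$-measure of $W^s_{f,\varepsilon}(x_0)$ need not be comparable to $\varepsilon$.

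Next, I would parametrize the rectangle by the bracket map
\[
\Phi_f:A^u_{f,1}\times W^s_{f,\varepsilon}(x_0)\rightarrow A^u_{f,1}(x_0',\varepsilon),\qquad
\Phi_f(y,s)=[y,s]:=W^s_{f,\mathrm{loc}}(y)\cap W^u_{f,\mathrm{loc}}(s),
\]
which is a bi-Lipschitz homeomorphism whose Jacobian equals (up to an absolute constant) $|\sin\measuredangle(E^u_f(\Phi_f(y,s)),E^s_f(\Phi_f(y,s)))|$. The angle between the subbundles is continuous in $x$ and jointly continuous in $(x,f)$ in the $C^1$-topology (since $E^{u,s}_f$ vary continuously in $C^1$-perturbations), and is bounded away from $0$ and $\pi$. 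Therefore in a sufficiently small $C^1$-neighborhood of any fixed $f_0\in\mathcal{U}$, the Jacobian of $\Phi_f$ is uniformly bounded above and below. The Arzel\`a-Ascoli argument of Section~\ref{sec: The Arzela-Ascoli Argument} then upgrades this to a uniform bound on $\mathcal{U}$. By Fubini, it follows that
\[
m(A^u_{f,1}(x_0',\varepsilon))\asymp \mathrm{length}^u(A^u_{f,1})\cdot \mathrm{length}^s(W^s_{f,\varepsilon}(x_0))\asymp \mathrm{length}^u(A^u_{f,1})\cdot\varepsilon,
\]
where in the second step I use Sublemma~\ref{lem: Uniform change in leaf metric} to compare leaf length with Riemannian length uniformly.

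It remains to show that $\mathrm{length}^u(A^u_{f,1})$ is uniformly bounded above and below for $f\in\mathcal{U}$. This is exactly the content of the bounds (\ref{eqn: Length Upper bound}) and (\ref{eqn: Length Lower Bound}) established inside the proof of Lemma~\ref{lem: Uniformity along A^u_f}: applying strong structural stability (Theorem~\ref{thm: Strong Structural Stability}) together with the uniform H\"older bounds on conjugacies $h_{f,\bar f}$ from Lemma~\ref{lem:Uniform Holder}, the images $A^u_{\bar f,1}=h_{f,\bar f}(A^u_{f,1})$ have uniformly comparable lengths in a $C^1$-neighborhood of any fixed $f$, and then the usual Arzel\`a-Ascoli compactness argument produces a single constant valid for all $f\in\mathcal{U}$. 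Combining the three steps yields $\mu_f(A^u_{f,1}(x_0',\varepsilon))\asymp\varepsilon$, as required. The main obstacle is the second step: obtaining uniform Jacobian bounds for $\Phi_f$, which ultimately reduces to the uniform transversality of the stable and unstable distributions across the compact family $\mathcal{U}$.
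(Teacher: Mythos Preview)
Your proof is correct and follows essentially the same route as the paper: use Lemma~\ref{lem:Uniform bounds on area Density} to reduce $\mu_f$ to Lebesgue area, compare the area of the rectangle to the product of its side lengths, and then invoke (\ref{eqn: Length Upper bound})--(\ref{eqn: Length Lower Bound}) for uniform bounds on $d^u_f(x_0,x_0')$. The only difference is in how the ``area $\asymp$ product of lengths'' step is justified: the paper argues via the uniform Lipschitz constants of the stable and unstable holonomies (so every transversal of the rectangle has length uniformly comparable to the corresponding side), whereas you parametrize by the bracket map $\Phi_f$ and bound its Jacobian. These are two descriptions of the same mechanism. One small imprecision: the Jacobian of $\Phi_f$ is not literally $|\sin\measuredangle(E^u_f,E^s_f)|$ up to an absolute constant---it is this sine multiplied by the derivatives of the stable and unstable holonomies carrying the base curves to the point $[y,s]$. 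Since those holonomy derivatives are themselves uniformly bounded above and below (as established in Section~\ref{sec: Uniformity of the New Conjugacy}), your conclusion stands, but the formula as stated is not quite right.
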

\begin{proof}
    First notice that by (\ref{eqn: Length Upper bound}) and (\ref{eqn: Length Lower Bound}), we have uniform upper and lower bounds on the lengths of the segments $A^u_{f,1}$ for all $f\in\mathcal{U}$. Since the stable and unstable holonomy maps are uniformly Lipschitz, we have uniform upper and lower bounds on the lengths of all stable and unstable transversals in $A^u_{f,1}(x_0',\varepsilon)$. Then since $\mu_f$ is equivalent to area with a density uniformly bounded above and below by Lemma \ref{lem:Uniform bounds on area Density}, we can uniformly compare $\mu_f(A^u_{f,1}(x_0',\varepsilon))$ to the product of the lengths of $A^u_{f,1}$ and $W^s_{f,\varepsilon}(x_0)$. That is, we have some uniform constant $C>0$ such that 
    $$
    C^{-1}d^u_f(x_0,x_0')\varepsilon\leq \mu_f(A^u_{f,1}(x_0',\varepsilon))\leq Cd^u_f(x_0,x_0')\varepsilon.
    $$
    By (\ref{eqn: Length Upper bound}) and (\ref{eqn: Length Lower Bound}), we may absorb the length $d^u_f(x_0,x_0')$ into $C$ without losing uniformity.
    \end{proof}

In light of Lemma \ref{lem: Comparing area to Length}, it is, up to a constant, sufficient to estimate
\begin{equation}
    \label{eqn: Pre-EE2}
    \frac{1}{\tau^{N/4}} \left|\int_{A^u_{f,1}(x_0',\varepsilon)} d\mu_f-\int_{A^u_{g,1}(h(x_0'),\Tilde{\varepsilon})}d\mu_g
\right|.
\end{equation}

Since $D_uf^N(p)=D_ug^N(h(p))$ for every $p\in\fix(f^N)$, we have $\mu_f^N=h^*\mu_g^N$. Moreover, since by definition $A^u_{g,1}(h(x),\Tilde{\varepsilon})=h(A^u_{f,1}(x,\varepsilon))$, we have
$$
\int_{A^u_{f,1}(x,\varepsilon)}d\mu_f^N=\int_{A^u_{g,1}(h(x),\Tilde{\varepsilon})}d\mu_g^N.
$$
Therefore,
$$
\left|\int_{A^u_{f,1}(x,\varepsilon)} d\mu_f-\int_{A^u_{g,1}(h(x),\Tilde{\varepsilon})}d\mu_g
\right|\leq
$$
\begin{equation}
    \label{eqn:Pre-EE2}
\left|\int_{A^u_{f,1}(x,\varepsilon)} d\mu_f-\int_{A^u_{f,1}(x,\varepsilon)}d\mu_f^N
\right|+
\left|\int_{A^u_{g,1}(h(x),\Tilde{\varepsilon})}d\mu_g-\int_{A^u_{g,1}(h(x),\Tilde{\varepsilon})}d\mu_g^N\right|.    
\end{equation}
We would like to apply Theorem \ref{Thm:Effective Equidistribution} to each term on the right side of (\ref{eqn:Pre-EE2}). However, we can not do this directly since the characteristic functions $\chi_{A^u_{f,1}(x,\varepsilon)}$ and 
$\chi_{A^u_{g,1}(h(x),\Tilde{\varepsilon})}$ are not Lipschitz continuous. We will instead first approximate the characteristic functions by Lipschitz function and then apply Theorem \ref{Thm:Effective Equidistribution} to these function. Here we will use the area-preserving hypothesis on both $f$ and $g$.

\begin{sublemma}
    \label{lem: Estimate Characteristic Function by Lipschitz Functions}
    There exists a one-parameter family of Lipschitz functions $\varphi^t_{f,x,\varepsilon}:\mathbb{T}^2\rightarrow [0,1]$ satisfying the following properties:
    \begin{enumerate}
        \item The family $\varphi^t_{f,x,\varepsilon}$ varies continuously with $t$ in the $C^0$-topology;
        \item $\varphi^0_{f,x,\varepsilon}\leq \chi_{A^u_{f,1}(x,\varepsilon)}$ and $\varphi^1_{f,x,\varepsilon}\geq \chi_{A^u_{f,1}(x,\varepsilon)}$;
        \item For every $t\in[0,1]$, $|\varphi^t_{f,x,\varepsilon}|_{Lip}\leq \tau^{-N/2}$;
        \item For every $t\in[0,1]$, $|\varphi^s_{f,x,\varepsilon}-\chi_{A^u_{f,1}(x,\varepsilon)}|=0$ except on a set $\Omega^f_N$ of measure $\mu_f(\Omega^f_N)\leq C_4\tau^{N/2}$, where $C_4>0$ is uniformly bounded for $f\in\mathcal{U}$.
\end{enumerate}
\end{sublemma}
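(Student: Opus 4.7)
The plan is to construct $\varphi^t_{f,x,\varepsilon}$ as a Lipschitz approximation to $\chi_{A^u_{f,1}(x,\varepsilon)}$ built by smoothly interpolating in a thin collar of width $\delta := \tau^{N/2}$ around the boundary of the rectangle, and to let the parameter $t$ interpolate between placing this collar just inside the rectangle (producing $\varphi^0 \leq \chi$) and just outside (producing $\varphi^1 \geq \chi$).

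Concretely, I would first define a signed distance-like function $\rho_{f,x,\varepsilon}:\mathbb{T}^2 \to \mathbb{R}$ measuring distance to the boundary $\partial A^u_{f,1}(x,\varepsilon)$, negative inside the rectangle and positive outside. In stable/unstable coordinates near the rectangle the boundary consists of two short unstable arcs and two short stable arcs of length bounded uniformly in $f \in \mathcal{U}$ (using the uniform Lipschitz bounds on the holonomies coming from the proof of Lemma \ref{lem: Uniformity along A^u_f} and Sublemma \ref{lem: Uniform change in leaf metric}), so $\rho_{f,x,\varepsilon}$ can be taken to be $1$-Lipschitz in a neighborhood of $\partial A^u_{f,1}(x,\varepsilon)$, uniformly in $f$. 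Fix a $C^\infty$ cutoff $\psi:\mathbb{R}\to[0,1]$ which is $1$ for $s\leq 0$, $0$ for $s\geq 1$, and has $|\psi'|\leq 2$. Then set
\[
\varphi^t_{f,x,\varepsilon}(y) \;:=\; \psi\!\left(\tfrac{\rho_{f,x,\varepsilon}(y)+(1-t)\delta}{\delta}\right),
\]
which slides the collar from strictly inside ($t=0$) to strictly outside ($t=1$) the rectangle. Properties (1) and (2) are immediate, and property (3) follows from $|\psi'|\leq 2$ and $\rho_{f,x,\varepsilon}$ being $1$-Lipschitz, giving Lipschitz constant $\leq 2/\delta = 2\tau^{-N/2}$; absorbing the constant into the definition (or the later constants) yields the claimed bound.

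The nontrivial step is (4): the set where $\varphi^t_{f,x,\varepsilon}$ differs from $\chi_{A^u_{f,1}(x,\varepsilon)}$ is contained in the $\delta$-collar $\Omega^f_N$ of $\partial A^u_{f,1}(x,\varepsilon)$. I would estimate $\mu_f(\Omega^f_N)$ by estimating its Lebesgue measure and invoking Lemma \ref{lem:Uniform bounds on area Density}. The collar decomposes into four pieces, each a ``thickening'' by width $\delta$ of one of the four boundary arcs; these arcs have length at most $\max\{\varepsilon, \operatorname{length}(A^u_{f,1})\}$, which is uniformly bounded in $f\in\mathcal{U}$ by \eqref{eqn: Length Upper bound} together with the uniform comparability of leaf and ambient metrics from Sublemma \ref{lem: Uniform change in leaf metric}. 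Since the stable and unstable directions make a uniform positive angle and the holonomies are uniformly Lipschitz, each thickening has area $\leq C\delta$ with $C$ uniform, so $m(\Omega^f_N)\leq 4C\delta$. The area-preserving hypothesis, via Lemma \ref{lem:Uniform bounds on area Density}, then gives $\mu_f(\Omega^f_N)\leq c_{\mathcal{U}}\cdot 4C\delta = C_4\tau^{N/2}$ with $C_4$ uniform in $f\in\mathcal{U}$.

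The main obstacle is controlling the geometry of the collar uniformly: one must ensure that the four boundary sides of $A^u_{f,1}(x,\varepsilon)$ are uniformly transverse and have uniformly bounded length, so that $\rho_{f,x,\varepsilon}$ is genuinely $1$-Lipschitz near the boundary with constants independent of $f\in\mathcal{U}$, $x\in A^u_{f,1}$, and $\varepsilon$ small. This is where the uniform local product structure from Theorem \ref{thm: Local Product Structure Uniform}, together with the Arzel\`a-Ascoli reduction of Section \ref{sec: The Arzela-Ascoli Argument}, does the work; once this is in place, property (1) (continuity of $t \mapsto \varphi^t$ in $C^0$) is automatic from the continuous dependence of $\varphi^t$ on the argument $(1-t)\delta/\delta$.
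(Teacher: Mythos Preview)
Your proposal is correct and follows essentially the same strategy as the paper: build a one-parameter family of Lipschitz cutoffs supported in a collar of width $\delta=\tau^{N/2}$ around $\partial A^u_{f,1}(x,\varepsilon)$, sliding the collar from inside to outside as $t$ runs from $0$ to $1$, and then bound $\mu_f(\Omega^f_N)$ by the Lebesgue measure of the collar via Lemma~\ref{lem:Uniform bounds on area Density}. The only cosmetic difference is that the paper carries out the interpolation leafwise---decreasing $\varphi^1$ to zero along unstable leaves across $\partial^u$ and along stable leaves across $\partial^s$, then contracting the support in two stages (first in the stable direction, then in the unstable direction)---whereas you package the same idea via a signed ambient distance composed with a fixed cutoff; both yield a Lipschitz constant of order $\tau^{-N/2}$ and a collar of area $\leq C\tau^{N/2}$, and both rely on exactly the uniform geometric inputs you identify (uniform transversality, uniformly bounded side lengths, uniform holonomy Lipschitz constants).
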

\begin{proof}
    We begin by constructing $\varphi^1_{f,x,\varepsilon}$ as being equal to $1$ on the set $A^u_{f,1}(x,\varepsilon)$. Then on the unstable manifold boundary $\partial^uA^u_{f,1}(x,\varepsilon)$, $\varphi^1_{f,x,\varepsilon}$ will decrease to $0$ smoothly along the unstable manifold at a rate bounded by $\tau^{-N/2}$; likewise on $\partial^sA^u_{f,1}(x,\varepsilon)$, $\varphi^1_{f,x,\varepsilon}$ will decrease to $0$ smoothly along the stable manifold at a rate bounded by $\tau^{-N/2}$. Then for continuity at the corner points we extend $\varphi^1_{f,x,\varepsilon}$ in any way as long as the slope is bounded by $\tau^{-N/2}$. Next, from $t=1$ to $t=\frac{1}{2}$, we continuously contract $\varphi^1_{f,x,\varepsilon}$ along the stable manifolds so that for $\varphi^{1/2}_{f,x,\varepsilon}$, the support of $\varphi^{1/2}_{f,x,\varepsilon}$ is entirely inside of  $A^u_{f,1}(x,\varepsilon)$, except for on the ``top" and ``bottom" unstable segments. Then from $t=\frac{1}{2}$ to $t=0$, we continuously contract along the unstable manifolds until the support of $\varphi^{0}_{f,x,\varepsilon}$ is contained entirely inside of $A^u_{f,1}(x,\varepsilon)$. Constructed in this way, the family $\varphi^t_{f,x\varepsilon}$ clearly satisfies $1$, $2$, and $3$.

    For condition $4$, we use Lemma \ref{lem:Uniform bounds on area Density} (which in particular uses our area-preserving assumption) to reduce the problem to estimating the Lebesgue measure of the set $\Omega^f_N$. For all $t\in [0,1]$, $\Omega^f_N$ is contained in a union of four (partially overlapping) rectangles. Estimating the measures of these rectangles is very similar to the proof of Lemma \ref{lem: Comparing area to Length}. The ``top" and ``bottom" rectangles have dimensions both bounded by $\tau^{N/2}$. The side rectangles have a long length bounded by the length of $A^u_{f,1}$ (up to some uniform constant), and width bounded by $\tau^{N/2}$. Adding up these measures, we get the claimed bound.
\end{proof}

Using this family, we can estimate
$$
\left|\int_{A^u_{f,1}(x,\varepsilon)} d\mu_f-\int_{A^u_{f,1}(x,\varepsilon)}d\mu_f^N
\right|\leq
$$
$$
\left|\int_{A^u_{f,1}(x,\varepsilon)} d\mu_f-\int \varphi^t_{f,x,\varepsilon}d\mu_f\right|+\left|\int \varphi^t_{f,x,\varepsilon}d\mu_f-
\int \varphi^t_{f,x,\varepsilon}d\mu_f^N\right|+
\left|\int \varphi^t_{f,x,\varepsilon}d\mu_f^N-\int_{A^u_{f,1}(x,\varepsilon)}d\mu_f^N \right|.
$$
We consider each of these terms separately. First
$$
\left|\int_{A^u_{f,1}(x,\varepsilon)} d\mu_f-\int \varphi^t_{f,x,\varepsilon}d\mu_f\right|\leq \int\left|\varphi^t_{f,x,\varepsilon}-\chi_{A^u_{f,1}(x,\varepsilon)} \right|d\mu_f=$$
$$
\int_{\Omega^f_N}\left|\varphi^t_{f,x,\varepsilon}-\chi_{A^u_{f,1}(x,\varepsilon)} \right|d\mu_f\leq 2\mu_f(\Omega^f_N)\leq 2C_4\tau^{N/2}.
$$
Next, by Theorem \ref{Thm:Effective Equidistribution},
$$
\left|\int \varphi^t_{f,x,\varepsilon}d\mu_f-
\int \varphi^t_{f,x,\varepsilon}d\mu_f^N\right|\leq C_3||\varphi^t_{f,x,\varepsilon}||_{Lip}\tau^N\leq C_3(1+\tau^{-N/2})\tau^N\leq C_5\tau^{N/2}.
$$
Finally, consider the function
$$
\varphi(t)=\int \varphi^t_{f,x,\varepsilon}d\mu_f^N-\int_{A^u_{f,1}(x,\varepsilon)}d\mu_f^N.
$$
Then $\varphi(t)$ is continuous in $t$ by Lemma \ref{lem: Estimate Characteristic Function by Lipschitz Functions} condition $1$ and by condition 2 of the family 
$\varphi^t_{f,x,\varepsilon}$, $\varphi(0)<0$ and $\varphi(1)>0$. By the intermediate value theorem, there exists some $t_0\in[0,1]$ such that $\varphi(t_0)=0.$ Therefore, using the function $\varphi^{t_0}_{f,x,\varepsilon}$ we have
$$
\frac{1}{\tau^{N/4}} \left|\int_{A^u_{f,1}(x,\varepsilon)} d\mu_f-\int_{A^u_{g,1}(h(x),\Tilde{\varepsilon})}d\mu_g
\right|\leq \frac{1}{\tau^{N/4}}(2C_4\tau^{N/2}+C_5\tau^{N/2})=C_6\tau^{N/4}.
$$
The same argument can be used to estimate
$$
\frac{1}{\tau^{N/4}} \left|\int_{A^u_{f,1}(x_0',\varepsilon)} d\mu_f-\int_{A^u_{g,1}(h(x_0'),\Tilde{\varepsilon})}d\mu_g
\right|$$
except when defining the one-parameter family $\varphi^t_{g,h(x),\Tilde{\varepsilon}}$, we get that 
$|\varphi^t_{g,h(x),\Tilde{\varepsilon}}-\chi_{A^u_{g,1}(h(x),\Tilde{\varepsilon})}|=0$ except on a set $\Omega^g_N$ of measure
$\mu_g(\Omega^g_N)\leq C_7\max\{\tau^{\alpha_0N/4},\tau^{N/4\alpha_0}\}$.

Putting this all together and combining all the constants into $C_0$, we get
$$
d(h(x),h_N(x))\leq C_0\max\{\tau^{\alpha\alpha_0N/4},\tau^{N/4\alpha_0}\}.
$$
This proves Lemma \ref{lem:First Step Estimate} with 
$\lambda_0=\max\{\tau^{\alpha\alpha_0/4},\tau^{1/4\alpha_0}, \gamma^2\}$.
\end{proof}

It remains to get the corresponding bound on $d_{C^0}(h^{-1},\overline{h}_N^{-1})$. As before we break this up as
$d_{C^0}(h^{-1},\overline{h}_N^{-1})\leq d_{C^0}(h^{-1},h_N^{-1})+d_{C^0}(h_N^{-1},\overline{h}_N^{-1})$. These terms will again be handled identically so we will only focus on bounding
$d_{C^0}(h^{-1},h_N^{-1})$. We will use the following simple lemma
\begin{lemma}
    \label{lem:Inverse Estimates}
Let $f,g:\mathbb{R}\rightarrow\mathbb{R}$ be two homeomorphisms and suppose that $g$ is $C^1$. If $|g'(x)|\geq \mu>0$ for every $x\in\mathbb{R}$ and $d_{C^0}(f,g)<\varepsilon$, we have
$d_{C^0}(f^{-1},g^{-1})<\frac{\varepsilon}{\mu}$.    
\end{lemma}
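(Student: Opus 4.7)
The plan is to prove this by a direct pointwise argument combining the definition of uniform distance with the mean value theorem. Fix an arbitrary $y \in \mathbb{R}$ and set $x := f^{-1}(y)$ and $x' := g^{-1}(y)$. The goal is then simply to show $|x - x'| < \varepsilon/\mu$, since $y$ is arbitrary.

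The key identity is that $g(x) - g(x') = g(x) - y = g(x) - f(x)$, and the hypothesis $d_{C^0}(f,g) < \varepsilon$ immediately gives $|g(x) - g(x')| < \varepsilon$. Next I apply the mean value theorem to $g$ on the interval between $x$ and $x'$ to obtain some $\xi$ with $g(x) - g(x') = g'(\xi)(x - x')$. Combining this with the lower bound $|g'(\xi)| \geq \mu$ yields $\mu |x - x'| \leq |g'(\xi)||x - x'| = |g(x) - g(x')| < \varepsilon$, so $|x - x'| < \varepsilon/\mu$, as desired.

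There is essentially no obstacle here; the only mild subtlety is that the hypothesis $|g'| \geq \mu > 0$ together with the assumption that $g$ is a homeomorphism of $\mathbb{R}$ ensures $g'$ has constant sign, so $g$ is a genuine $C^1$ monotone bijection and the mean value theorem applies on any interval $[x,x']$. Since the bound $|x - x'| < \varepsilon/\mu$ is uniform in $y$, we conclude $d_{C^0}(f^{-1}, g^{-1}) < \varepsilon/\mu$.
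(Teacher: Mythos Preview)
Your proof is correct and is precisely the elementary calculation the paper alludes to; in fact the paper skips the proof entirely, writing only ``The proof of this lemma is an elementary calculation, which we shall skip.'' Your mean value theorem argument is the natural way to carry it out.
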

\noindent
The proof of this lemma is an elementary calculation, which we shall skip.

Given $x\in\mathbb{T}^2$, $h_N(x)\in W^u_g(h(x))$, so we may regard $h$ and $h_N$ as functions from $W^u_f(x)$ to $W^u_g(h(x))$, both of which are one-dimensional $C^2$ curves. Note that the arguments in Section \ref{sec: Uniformity of the New Conjugacy} can be repeated nearly verbatim to give uniform lower bounds on $D_uh_N|_{W^u_f(x)}$.  Therefore, we may apply Lemma \ref{lem:Inverse Estimates} to obtain a bound
$$
d_{C^0}(h^{-1},h_N^{-1})< |\min D_uh_N|^{-1}d_{C^0}(h,h_N)<C_0'\lambda_0^N.
$$
Replacing $C_0$ by $\max\{C_0,C_0'\}$, we have the desired bounds.

\section{Proof of Main Theorem: The $C^1$ Estimates}
\label{sec: Proof of the Main Theorem: The C1 Estimates}
Recall that $d_{C^0}(h,\overline{h}_N)\leq C_0\lambda_0^N$. We begin with a simple estimate:
\begin{lemma}
\label{lem:C^0 estimates for F_N}
    Let $f_N:=\overline{h}_N^{-1}g\overline{h}_N$. Then $d_{C^0}(f,f_N)<C_8\lambda_0^N$.
\end{lemma}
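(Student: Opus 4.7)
The plan is to reduce the desired bound to two applications of Theorem \ref{thm: C^0 Estimate Theorem}, one directly and one through the Lipschitz continuity of $\overline{h}_N^{-1}\circ g$. Using the conjugacy equation $hf=gh$, which gives $f=h^{-1}gh$, together with $f_N=\overline{h}_N^{-1}g\overline{h}_N$, I would write, for any $x\in\mathbb{T}^2$,
\begin{equation*}
    d(f(x),f_N(x))=d\bigl(h^{-1}(g(h(x))),\overline{h}_N^{-1}(g(\overline{h}_N(x)))\bigr),
\end{equation*}
and split this using the triangle inequality into
\begin{equation*}
    d\bigl(h^{-1}(g(h(x))),\overline{h}_N^{-1}(g(h(x)))\bigr)+d\bigl(\overline{h}_N^{-1}(g(h(x))),\overline{h}_N^{-1}(g(\overline{h}_N(x)))\bigr).
\end{equation*}

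The first term is bounded by $d_{C^0}(h^{-1},\overline{h}_N^{-1})\leq C_0\lambda_0^N$ by Theorem \ref{thm: C^0 Estimate Theorem}. For the second term, the strategy is to use that the composition $\overline{h}_N^{-1}\circ g$ is uniformly Lipschitz for all $f,g\in\mathcal{U}$ and $N\in\mathbb{N}$. The map $g\in\mathcal{U}$ is $C^2$ with uniform bounds from the Arzel\`a-Ascoli argument (Section \ref{sec: The Arzela-Ascoli Argument}), hence uniformly Lipschitz. For $\overline{h}_N^{-1}$, Theorem \ref{Thm:Uniform Conjugacy Bounds} supplies uniform $C^{1+\alpha}$ bounds on $\overline{h}_N$; combined with the uniform lower bounds on $D_u h_N$ (and the corresponding uniform lower bounds on $D_s\overline{h}_N$) obtained by repeating the arguments of Section \ref{sec: Uniformity of the New Conjugacy} essentially verbatim (as mentioned at the end of Section \ref{sec: Proof the Main Theorem the C^0 Estimates}), we obtain a uniform Lipschitz bound $\operatorname{Lip}(\overline{h}_N^{-1})\leq L$ depending only on $\mathcal{U}$. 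Therefore the second term is bounded by
\begin{equation*}
    L\cdot \operatorname{Lip}(g)\cdot d(h(x),\overline{h}_N(x))\leq L\cdot \operatorname{Lip}(g)\cdot C_0\lambda_0^N.
\end{equation*}

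Combining both estimates and absorbing the constants $L$, $\operatorname{Lip}(g)$, and $C_0$ into a single $C_8>0$ that depends only on $\mathcal{U}$ yields $d_{C^0}(f,f_N)\leq C_8\lambda_0^N$, as required. The step most deserving of care is the uniformity of $\operatorname{Lip}(\overline{h}_N^{-1})$ across $N$ and across $\mathcal{U}$; but this is immediate from the uniform $C^{1+\alpha}$ bounds in Theorem \ref{Thm:Uniform Conjugacy Bounds} together with the uniform lower bounds on the derivatives of $\overline{h}_N$ on both stable and unstable leaves (which follow from Journ\'e's lemma and the uniform leafwise lower bounds already established in Section \ref{sec: Uniformity of the New Conjugacy}). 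Everything else is routine.
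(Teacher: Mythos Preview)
Your proof is correct and follows essentially the same route as the paper: both decompose $d_{C^0}(h^{-1}gh,\overline{h}_N^{-1}g\overline{h}_N)$ via the intermediate $\overline{h}_N^{-1}gh$, bound the first piece by $d_{C^0}(h^{-1},\overline{h}_N^{-1})$, and control the second by $\lip(\overline{h}_N^{-1}g)\,d_{C^0}(h,\overline{h}_N)$. Your version is slightly more explicit in justifying the uniform Lipschitz constant of $\overline{h}_N^{-1}$, which the paper leaves implicit.
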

\begin{proof}
    $$d_{C^0}(f,f_N)=d_{C^0}(h^{-1}gh,\overline{h}_N^{-1}g\overline{h}_N)\leq
    d_{C^0}(h^{-1}gh,\overline{h}_N^{-1}gh)+d_{C^0}(\overline{h}_N^{-1}gh,\overline{h}_N^{-1}g\overline{h}_N)
    $$
    $$
    \leq d_{C^0}(h^{-1},\overline{h}_N^{-1})+\lip(\overline{h}_N^{-1}g)d_{C^0}(h,\overline{h}_N)<
    (1+\lip(\overline{h}_N^{-1}g))C_0\lambda_0^{N}=C_8\lambda_0^N.
    $$
\end{proof}
Consider the function $F_N:=f-f_N$. Then we have shown that $||F_N||_{C^0}<C_8\lambda_0^N$. The goal for the remainder of this section is to prove $||F_N||_{C^1}<C_1\lambda_1^N$ for some
$C_1>0$ and $0<\lambda_1<1$. We will prove in Lemma \ref{lem: C^1 bound for F_N} below that for $\alpha>0$ as in Theorem \ref{Thm:Uniform Conjugacy Bounds}, $||F_N||_{C^{1+\alpha}}<C_9$, for some uniform constant $C_9>0$. From here the result will follow from the following interpolation theorem (see e.g. \cite{Interpolation_Parabolic}):
\begin{theorem}
\label{Thm:Interpolation}
    For any $0<\epsilon<\alpha$, there exists $0<\theta<1$ and $C_\theta>0$ such that for any $\psi\in C^{1+\alpha}(\mathbb{R}^2)$, 
    $$
    ||\psi||_{C^{1+\epsilon}}\leq C_\theta||\psi||^\theta_{C^0}||\psi||^{1-\theta}_{C^{1+\alpha}}.
    $$
\end{theorem}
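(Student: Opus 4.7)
The plan is to derive the interpolation from two classical building blocks -- a Landau--Kolmogorov-type pointwise bound on $\nabla\psi$, and a ``split by scale'' estimate for the $\epsilon$-Hölder seminorm of $\nabla\psi$ -- and then compose them. The correct exponent on $\|\psi\|_{C^0}$ will turn out to be $\theta=(\alpha-\epsilon)/(1+\alpha)\in(0,1)$. Since every step reduces to a one-dimensional argument along a line segment, the ambient dimension is irrelevant and the statement on $\mathbb{R}^2$ reduces to the statement on $\mathbb{R}$.

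First I would fix a unit vector $v$ and $t>0$ and apply Taylor's theorem $\psi(x+tv)-\psi(x)=t\,\nabla\psi(x)\cdot v+R_t(x)$ with $|R_t(x)|\leq t^{1+\alpha}[\nabla\psi]_{C^\alpha}$, which gives $|\nabla\psi(x)\cdot v|\leq 2\|\psi\|_{C^0}/t+t^\alpha\|\psi\|_{C^{1+\alpha}}$. Choosing $t=(\|\psi\|_{C^0}/\|\psi\|_{C^{1+\alpha}})^{1/(1+\alpha)}$ yields
\[
\|\nabla\psi\|_{C^0}\leq C\|\psi\|_{C^0}^{\alpha/(1+\alpha)}\|\psi\|_{C^{1+\alpha}}^{1/(1+\alpha)}.
\]
Next, for the $\epsilon$-Hölder seminorm of $\nabla\psi$, I would split at scale $\delta$: use $|\nabla\psi(x)-\nabla\psi(y)|\leq[\nabla\psi]_{C^\alpha}|x-y|^\alpha$ when $|x-y|\leq\delta$ and the trivial bound $2\|\nabla\psi\|_{C^0}$ when $|x-y|>\delta$. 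Dividing by $|x-y|^\epsilon$ and optimizing $\delta$ to balance the two estimates produces
\[
[\nabla\psi]_{C^\epsilon}\leq C\|\nabla\psi\|_{C^0}^{(\alpha-\epsilon)/\alpha}[\nabla\psi]_{C^\alpha}^{\epsilon/\alpha}.
\]

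Finally I would substitute the Landau--Kolmogorov estimate into this last display and use $[\nabla\psi]_{C^\alpha}\leq\|\psi\|_{C^{1+\alpha}}$. A short check using the identity $(\alpha-\epsilon)/(\alpha(1+\alpha))+\epsilon/\alpha=(1+\epsilon)/(1+\alpha)$ yields $[\nabla\psi]_{C^\epsilon}\leq C\|\psi\|_{C^0}^{\theta}\|\psi\|_{C^{1+\alpha}}^{1-\theta}$ with $\theta=(\alpha-\epsilon)/(1+\alpha)$. The bounds on $\|\psi\|_{C^0}$ and $\|\nabla\psi\|_{C^0}$ appearing in the decomposition of $\|\psi\|_{C^{1+\epsilon}}$ follow immediately from the two building blocks combined with $\|\psi\|_{C^0}\leq\|\psi\|_{C^{1+\alpha}}$, which absorbs any surplus power of $\|\psi\|_{C^0}$ into $\|\psi\|_{C^{1+\alpha}}$ so as to match the exponent $\theta$; summing the three contributions gives the claimed $C^{1+\epsilon}$ estimate. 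I do not anticipate any real obstacle -- the argument is classical Gagliardo--Nirenberg-style optimization, and the only delicate step is the exponent bookkeeping, which is elementary to verify.
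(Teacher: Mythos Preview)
Your argument is correct: the Landau--Kolmogorov bound on $\nabla\psi$ and the scale-splitting estimate for $[\nabla\psi]_{C^\epsilon}$ are both standard, the exponent arithmetic checks out (indeed $\theta=(\alpha-\epsilon)/(1+\alpha)$ and $1-\theta=(1+\epsilon)/(1+\alpha)$), and the absorption of the surplus $\|\psi\|_{C^0}$ power via $\|\psi\|_{C^0}\leq\|\psi\|_{C^{1+\alpha}}$ is legitimate.

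There is nothing to compare against, however: the paper does not prove Theorem~\ref{Thm:Interpolation}. It is stated as a known interpolation inequality with a citation (``see e.g.\ \cite{Interpolation_Parabolic}'') and then applied as a black box to pass from the $C^0$ and $C^{1+\alpha}$ bounds on $F_N$ to the desired $C^1$ estimate. Your write-up therefore supplies a self-contained proof where the paper gives only a reference, and the classical Gagliardo--Nirenberg-style optimization you outline is exactly the kind of argument one finds behind such citations.
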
 
We will use this in conjunction with the following
\begin{lemma}
\label{lem: C^1 bound for F_N}
    There exists $C_9>0$, uniform in $f,g\in\mathcal{U}$ such that
    $||F_N||_{C^{1+\alpha}}<C_9$.
\end{lemma}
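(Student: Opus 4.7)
The approach is to apply the triangle inequality
\[
||F_N||_{C^{1+\alpha}} \leq ||f||_{C^{1+\alpha}} + ||f_N||_{C^{1+\alpha}}
\]
and bound each summand uniformly. Since $\mathcal{U}$ is $C^2$-bounded, $||f||_{C^{1+\alpha}}$ and $||g||_{C^{1+\alpha}}$ are uniformly bounded for $f,g\in\mathcal{U}$, so the task reduces to producing a uniform $C^{1+\alpha}$ bound on $f_N = \overline{h}_N^{-1} g \overline{h}_N$.

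Theorem \ref{Thm:Uniform Conjugacy Bounds} already provides a uniform bound $||\overline{h}_N||_{C^{1+\alpha}} \leq M$. Combined with the standard composition estimate in H\"older spaces, it is therefore enough to establish a uniform $C^{1+\alpha}$ bound on $\overline{h}_N^{-1}$, since then $||\overline{h}_N^{-1} g \overline{h}_N||_{C^{1+\alpha}}$ is controlled by the norms of the three factors.

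For the inverse, I would first establish a uniform lower bound on the Jacobian $|\det D\overline{h}_N|$. Since $\overline{h}_N$ agrees with $h_N$ on $A^u_f$, equation \eqref{eqn: Formula for h_N'} expresses $D_u h_N|_{A^u_f}$ as a ratio of normalized conditional densities, which are uniformly bounded above and below by the estimates of Lemma \ref{lem: Uniformity along A^u_f}. The weighted-holonomy extension preserves both bounds, since \eqref{eqn:Derivative of Holonomy Formula} shows each stable-holonomy derivative factor is uniformly positive, and a convex combination of uniformly positive quantities is uniformly positive. A symmetric argument for the second step of the construction, using Lemmas \ref{lem:s-diff of second step} and \ref{lem: u-diff Second Step}, produces the corresponding uniform lower bound for $D_s \overline{h}_N$. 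Together with uniform transversality of the stable and unstable directions in $\mathcal{U}$, these give $|\det D\overline{h}_N| \geq c > 0$ uniformly in $f,g\in\mathcal{U}$ and $N\in\mathbb{N}$. The standard $C^{1+\alpha}$ inverse-function estimate, based on $D\overline{h}_N^{-1} = (D\overline{h}_N)^{-1}\circ \overline{h}_N^{-1}$, then yields a uniform $C^{1+\alpha}$ bound on $\overline{h}_N^{-1}$.

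The main technical obstacle is the uniform lower bound on $|\det D\overline{h}_N|$. The upper bounds follow directly from Theorem \ref{Thm:Uniform Conjugacy Bounds}, but the lower bound requires tracing carefully through the weighted-holonomy construction with the explicit density formulas, paralleling the arguments already developed in Section \ref{sec: Uniformity of the New Conjugacy}. Once this uniformity is secured, the remainder reduces to routine composition and triangle-inequality estimates in $C^{1+\alpha}$.
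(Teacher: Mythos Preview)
Your proposal is correct and follows the same outline as the paper's proof: triangle inequality, bound $\|f\|_{C^{1+\alpha}}$ via $C^2$-boundedness of $\mathcal{U}$, and bound $\|f_N\|_{C^{1+\alpha}}=\|\overline{h}_N^{-1}g\overline{h}_N\|_{C^{1+\alpha}}$ via the chain rule, the bound on $g$, and Theorem~\ref{Thm:Uniform Conjugacy Bounds}. The paper's argument is only two sentences and leaves the uniform $C^{1+\alpha}$ bound on $\overline{h}_N^{-1}$ entirely implicit, whereas you spell it out through a uniform lower bound on $|\det D\overline{h}_N|$; that extra step is correct, is consistent with the remark at the end of Section~\ref{sec: Proof the Main Theorem the C^0 Estimates} that the arguments of Section~\ref{sec: Uniformity of the New Conjugacy} give uniform lower bounds on the leafwise derivatives, and makes explicit what the paper takes for granted.
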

Together with Lemma \ref{lem:C^0 estimates for F_N} and Theorem \ref{Thm:Interpolation} we have
$$
||F_N||_{C^1}<||F_N||_{C^{1+\epsilon}}\leq 
C_\theta||F_N||^\theta_{C^0}||F_N||^{1-\theta}_{C^{1+\alpha}}<
C_\theta C_8^\theta C_9^{1-\theta}\lambda_0^{\theta N}.
$$
This proves Theorem \ref{Thm:Main Theorem} with $\lambda_1=\lambda_0^\theta$. It remains to prove Lemma \ref{lem: C^1 bound for F_N}:
\begin{proof}[Proof of Lemma \ref{lem: C^1 bound for F_N}]
    Since $||F_N||_{C^{1+\alpha}}\leq ||f||_{C^{1+\alpha}}+||f_N||_{C^{1+\alpha}}$ and $f$ is contained in the $C^2$ bounded set $\mathcal{U}$, it suffices to uniformly bound $||f_N||_{C^{1+\alpha}}=||\overline{h}_N^{-1}g\overline{h}_N||_{C^{1+\alpha}}$. This follows from the chain rule together with our assumption that $g$ is in the $C^2$ bounded set $\mathcal{U}$ and Theorem \ref{Thm:Uniform Conjugacy Bounds}.
\end{proof}

\section{Proof of Theorem \ref{Thm:Effective Equidistribution}}
\label{sec: Proof of EE}
The proof will proceed similarly to the Proof of Theorem 2.2 in \cite{FDR1}. In particular we start with the following theorem for subshifts of finite type:
\begin{theorem}[Effective Equidistribution for Equilibrium States]
\label{thm:EE for SFT}
Let $(\Sigma_A,\sigma_A)$ be a subshift of finite type, where the transition matrix $A$ is irreducible and aperiodic, and let $\psi\in\mathcal{F}_\theta$ be a Lipschitz continuous potential. Then there exists constants $C_{10}>0$ and $0<\tau<1$ such that for any $\varphi\in\mathcal{F}_\theta$ and all $n\in\mathbb{N}$, 

$$
\bigg|\int \varphi d\mu^n_{\psi}-\int \varphi d\mu_\psi\bigg|\leq
C_{10}||\varphi||_\theta\tau^n,
$$
where $\mu_\psi$ is the unique equilibrium state of $\psi$.
\end{theorem}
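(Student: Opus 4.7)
The plan is to reduce the estimate to the spectral gap of the Ruelle--Perron--Frobenius transfer operator
$\mathcal{L}_\psi : \mathcal{F}_\theta \to \mathcal{F}_\theta$, defined by
$(\mathcal{L}_\psi \varphi)(y) = \sum_{\sigma_A(x) = y} e^{\psi(x)}\varphi(x)$.
By the classical Ruelle--Perron--Frobenius theorem for mixing subshifts of finite type with H\"older potential (Parry--Pollicott, Bowen), $\mathcal{L}_\psi$ has a simple leading eigenvalue $e^{P(\psi)}$ with strictly positive eigenfunction $h_\psi \in \mathcal{F}_\theta$, and the rest of its spectrum is contained in a disk of radius $\tau_0\, e^{P(\psi)}$ for some $\tau_0 \in (0,1)$. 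Normalizing the potential to $\widetilde{\psi} := \psi - P(\psi) + \log h_\psi - \log h_\psi \circ \sigma_A$ yields an operator $\mathcal{L}_{\widetilde{\psi}}$ with leading eigenvalue $1$, eigenfunction $\mathbf{1}$, and satisfying $\mathcal{L}_{\widetilde{\psi}}^* \mu_\psi = \mu_\psi$. The spectral gap then gives, for every $\varphi \in \mathcal{F}_\theta$ and every $n \in \mathbb{N}$,
\[
\left\| \mathcal{L}_{\widetilde{\psi}}^n \varphi - \int \varphi\, d\mu_\psi \right\|_\theta \leq C\, \tau_0^n \|\varphi\|_\theta.
\]

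Next, I would relate the weighted periodic sum to iterates of $\mathcal{L}_{\widetilde{\psi}}$. Because $\psi$ and $\widetilde{\psi}$ differ by the constant $P(\psi)$ and a coboundary that telescopes on any periodic orbit, one has $S_n\psi(x) = nP(\psi) + S_n\widetilde{\psi}(x)$ for every $x \in \fix(\sigma_A^n)$, so it suffices to estimate
$T_n(\varphi) := \sum_{x \in \fix(\sigma_A^n)} \varphi(x)\, e^{S_n\widetilde{\psi}(x)}$. By the Markov structure, each admissible cylinder $[i_0,\dots,i_{n-1}]$ with $A_{i_{n-1}i_0} = 1$ contains exactly one fixed point of $\sigma_A^n$, and the standard bounded distortion estimate
$|S_n\widetilde{\psi}(x) - S_n\widetilde{\psi}(y)| \leq |\widetilde{\psi}|_\theta \sum_{k=0}^{n-1}\theta^{\min(k,\,n-k)} \leq K$
holds uniformly for $x,y$ in the same $n$-cylinder. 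Combined with H\"older control on $\varphi$ across each cylinder, this yields the Bowen-type trace formula
$T_n(\varphi) = \int \varphi\, d\mu_\psi + O(\tau_1^n \|\varphi\|_\theta)$ for some $\tau_1 \in (0,1)$: on each cylinder, replace the value of $\varphi\cdot e^{S_n\widetilde{\psi}}$ at the unique periodic point by its cylinder-average against $\mu_\psi$, then sum and invoke the spectral gap.

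Taking the ratio completes the proof: applying the previous paragraph with $\varphi \equiv 1$ gives $Z_n(\psi)\, e^{-nP(\psi)} = 1 + O(\tau_1^n)$, hence
\[
\int \varphi\, d\mu_\psi^n = \frac{T_n(\varphi)}{T_n(1)} = \frac{\int \varphi\, d\mu_\psi + O(\tau_1^n\|\varphi\|_\theta)}{1 + O(\tau_1^n)} = \int \varphi\, d\mu_\psi + O(\tau^n \|\varphi\|_\theta),
\]
after possibly shrinking the rate to some $\tau \in (0,1)$. The main obstacle is the precise bookkeeping in the trace formula: since $\mathcal{L}_\psi$ is not literally trace class, one cannot directly invoke the naive identity $\sum_{x \in \fix(\sigma_A^n)} e^{S_n\psi(x)} = \mathrm{tr}(\mathcal{L}_\psi^n)$, and one must instead argue via the cylinder decomposition above. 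This part is classical for subshifts of finite type; as the author notes, the more delicate business of lifting the statement to the original Anosov system via a Markov partition --- where periodic orbits living on the partition boundary may be overcounted --- is what requires genuine additional work and is handled separately.
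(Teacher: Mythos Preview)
The paper does not actually prove this theorem: immediately after the statement it writes ``See [FDR1] for the proof,'' and then moves on to explaining how to deduce Theorem~\ref{Thm:Effective Equidistribution} from it. The only hint it gives is that ``the proof of Theorem~\ref{thm:EE for SFT} relies heavily on the spectral gap of the transfer operator $\mathcal{L}_{\psi_f\circ\pi_f}$,'' which is precisely the mechanism you use. Your outline---normalize the potential so the leading eigenvalue is $1$, use bounded distortion on $n$-cylinders to pass from the weighted periodic sum to $\mathcal{L}_{\widetilde\psi}^n$, invoke the spectral gap, and then divide $T_n(\varphi)$ by $T_n(1)$---is the standard argument and is consistent with what the cited reference does. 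Your closing remark about the Markov-partition overcounting being the genuinely new issue also matches the paper's emphasis in Section~\ref{sec: Proof of EE}.
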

See \cite{FDR1} for the proof. Now consider the geometric potential $\psi_f=-\log D_uf$. It is an important characterization of the SRB measure that for $C^2$ Anosov diffeomorphisms, the SRB measure $\mu_f$ is the unique equilibrium state of the potential $\psi_f$. Then using a Markov partition, we can consider a subshift of finite type
$\sigma_A:\Sigma_A\rightarrow\Sigma_A$ that is semi-conjugate to $f:\mathbb{T}^2\rightarrow\mathbb{T}^2$; that is, $\pi_f\circ\sigma_A=f\circ\pi_f$, where $\pi_f:\Sigma_A\rightarrow \mathbb{T}^2$. Given any $0<\theta<1$, we can define a metric on $\Sigma_A$ by
$$
d_{\theta}(x,y)=\theta^{\max\{n\geq0|x_i=y_i, 0\leq |i|<n\}}.
$$
For an appropriately chosen $\theta$ (which depends only on the rate of contraction and regularity of the foliations, both of which are uniform in $\mathcal{U}$), $\pi_f$ is Lipschitz continuous. The idea of the proof then is to consider the lifted potential $\psi_f\circ\pi_f$ and then push the effective equidistribution for the equilibrium state $\mu_{\psi_f\circ\pi_f}$ of $\psi_f\circ\pi_f$ to the desired effective equidistribution of $\mu_f$. 

There are two main difficulties with this. The first is uniformity. The proof of Theorem \ref{thm:EE for SFT} relies heavily on the spectral gap of the transfer operator $\mathcal{L}_{\psi_f\circ\pi_f}$ associated to $\psi_f\circ\pi_f$, and we need $||\mathcal{N}^n_{\psi_f\circ\pi_f}||_\theta<C_{11}\tau^ne^{nP(\psi_f\circ\pi_f)}$ for $C_{11}>0$ and $0<\tau<1$ uniform in $f\in\mathcal{U}$, where $\mathcal{N}^n_{\psi_f\circ\pi_f}$ is the transfer operator $\mathcal{L}_{\psi_f\circ\pi_f}$ minus the projection to the leading (simple) eigenspace. This uniform estimate can be achieved using the Birkhoff cone argument in \cite{Frédéric}.

The second difficulty that arises is the over counting of periodic orbits in the symbolic coding $\sigma_A:\Sigma_A\rightarrow\Sigma_A$. This can be handled by the standard arguments of Manning \cite{Manning74}. Assuming for now uniformity of the constants $C_{10}$ and $\tau$ in Theorem \ref{thm:EE for SFT}, we proof Theorem \ref{Thm:Effective Equidistribution}:
\begin{proof}[Proof of Theorem \ref{Thm:Effective Equidistribution}]
Let $\nu_{\psi_f\circ\pi_f}^n$ be any measure on $\Sigma_A$ such that $\pi_f^*\nu_{\psi_f\circ\pi_f}^n=\mu_{\psi_f}^n$. Then

$$
\left|\int\varphi d\mu_{\psi_f}^n-\int\varphi d\mu_{\psi_f}\right|= 
\left|\int\varphi\circ\pi_f d\nu_{\psi_f\circ\pi_f}^n-\int\varphi\circ\pi_f d\mu_{\psi_f\circ\pi_f}\right|$$
\begin{equation}
\label{eqn:EE SFT 1}
\leq\left|\int\varphi\circ\pi d\mu_{\psi\circ\pi}^n-\int\varphi\circ\pi d\mu_{\psi\circ\pi}\right|+
\left|\int\varphi\circ\pi_f d\nu_{\psi_f\circ\pi_f}^n-\int\varphi\circ\pi_f d\mu_{\psi_f\circ\pi_f}^n\right|.
\end{equation}
The first term in (\ref{eqn:EE SFT 1}) is estimated by Theorem \ref{thm:EE for SFT}. To estimate the second term, we need consider the difference between the measures $\mu_{\psi_f\circ\pi_f}^n$ and $\nu_{\psi_f\circ\pi_f}^n$. In general, they are not the same measures. There are two reasons for this. First, the semi-conjugacy $\pi_f$ is not injective. Rather, it is finite-to-one (for concreteness, say $\pi_f$ is $k$-to-one where $k$ is the size of the Markov partition), and so two or more distinct orbits of period $n$ in $\Sigma_A$ may be mapped to the same orbit of period $n$ in $\mathbb{T}^2$. Second, orbits of period $n$ in $\mathbb{T}^2$ may not lift to orbits of period $n$ in $\Sigma_A$. This happens when a point $x\in\fix(f^n)$ lies on the boundary of two or more rectangles in the Markov Partition. When this happens, $x$ may lift to an orbit or period $2n,3n,\cdots,kn$. Therefore, when choosing the measure
$\nu_{\psi_f\circ\pi_f}^n$, we need to estimate how many points $x\in\fix(\sigma_A^n)$ we have left over that get mapped to duplicate points in $\fix(f^n)$, and how many many points in $\fix(\sigma_A^{jn})$, $j=2,\cdots,k$, that we need to include in $\nu_{\psi_f\circ\pi_f}^n$. We let $A_n$ denote the set of periodic points of $\sigma_A$ over which $\nu_{\psi_f\circ\pi_f}^n$ is defined, and set $B_n=\fix(\sigma_A^n)/A_n$ and $C_n=A_n/\fix(\sigma_A^n)$. Thus, $B_n$ consists of those points of $\fix(\sigma_A^n)$ which are redundant for representing points in $\fix(f^n)$, and $C_n$ consists of those points of period greater than $n$ in $\Sigma_A$ needed to represent points in $\fix(f^n)$.
Then 
$$
\left|\int\varphi\circ\pi_f d\nu_{\psi_f\circ\pi_f}^n-\int\varphi\circ\pi_f d\mu_{\psi_f\circ\pi_f}^n\right|=$$
$$
\left|\frac{1}{Z_n(\psi_f\circ\pi_f)}\sum_{x\in\fix(\sigma_A^n)}e^{S_n\psi_f(\pi_f(x))}\varphi(\pi_f(x))-\frac{1}{Z_n(\psi_f)}\sum_{x\in A_n}e^{S_n\psi_f(\pi_f(x))}\varphi(\pi_f(x))\right|\leq
$$
$$\left| 
\frac{1}{Z_n(\psi_f\circ\pi_f)}-\frac{1}{Z_n(\psi_f)}\right|
\sum_{x\in\fix(\sigma_A^n)}e^{S_n\psi_f(\pi_f(x))}|\varphi(\pi_f(x))| $$
$$
+\frac{1}{Z_n(\psi_f)}\left|\sum_{x\in\fix(\sigma_A^n)}e^{S_n\psi_f(\pi_f(x))}\varphi(\pi_f(x))-
\sum_{x\in A_n}e^{S_n\psi_f(\pi_f(x))}\varphi(\pi_f(x))
\right|\leq
$$
$$
\left|\frac{Z_n(\psi_f\circ\pi_f)}{Z_n(\psi_f)}-1\right|||\varphi||_{\infty}+\frac{1}{Z_n(\psi_f)}\sum_{x\in B_n}e^{S_n\psi_f(\pi_f(x))}|\varphi(\pi_f(x))|+
\frac{1}{Z_n(\psi_f)}\sum_{x\in C_n}e^{S_n\psi_f(\pi_f(x))}|\varphi(\pi_f(x))|.
$$
Notice that 
$$
Z_n(\psi_f\circ\pi_f)=Z_n(\psi_f)+\sum_{x\in B_n}e^{S_n\psi_f(\pi_f(x))},
$$
so that
$$
\left|\frac{Z_n(\psi_f\circ\pi_f)}{Z_n(\psi_f)}-1\right|=
\frac{1}{Z_n(\psi_f)}\sum_{x\in B_n}e^{S_n\psi_f(\pi_f(x))}.
$$
Therefore, it comes down to estimating
$$
\left(\frac{2}{Z_n(\psi_f)}\sum_{x\in B_n}e^{S_n\psi_f(\pi_f(x))}
+\frac{1}{Z_n(\psi_f)}\sum_{x\in C_n}e^{S_n\psi_f(\pi_f(x))}\right)
||\varphi||_\infty.
$$
By Lemma 1 of \cite{Manning74}, the cardinality of the sets $B_n$ and $C_n$ grow at a slower rate than the cardinality of $\fix(f^n)$, and the same is true for the weighted sums over these sets:
$$
\sum_{x\in B_n}e^{S_n\psi_f(\pi_f(x))}\leq D_1\eta^n_1Z_n(\psi_f),
\sum_{x\in C_n}e^{S_n\psi_f(\pi_f(x))}\leq D_2\eta^n_2Z_n(\psi_f),
$$
and moreover the constants $D_1,D_2>0$ and $0<\eta_1,\eta_2<1$ can be made uniform in $f\in\mathcal{U}$. 
\end{proof}

\printbibliography

The Ohio State University, 231 West 18$^{\text{th}}$ Avenue, 43210, Columbus, OH, USA

\textit{Email address}: 
\href{mailto:ohare.26@osu.edu}{ohare.26@osu.edu}

\end{document}